\documentclass[11pt]{article}

\usepackage[a4paper,margin = 1in]{geometry}

\usepackage{amsmath,amssymb,bbm,mathrsfs,amsbsy} 

\usepackage{amsthm}

\usepackage{multicol}
\usepackage{multirow}
\usepackage{diagbox}

\usepackage[labelfont=bf]{caption}

\usepackage{comment}

\usepackage{setspace}
\doublespacing

\usepackage{stix}

\usepackage[shortlabels]{enumitem}

\usepackage{tabularx}

\usepackage{tikz}

\usepackage[hidelinks]{hyperref}

\usepackage{enumitem}

\usepackage{parskip}

\usepackage{url}
\makeatletter
\g@addto@macro{\UrlBreaks}{\UrlOrds}
\makeatother

\usepackage[
backend=biber,
sorting=nty
]{biblatex}

\addbibresource{main.bib}

\newtheorem{theorem}{Theorem}[section]
\newtheorem{corollary}[theorem]{Corollary}
\newtheorem{lemma}[theorem]{Lemma}
\newtheorem{conjecture}[theorem]{Conjecture}
\newtheorem{proposition}[theorem]{Proposition}
\newtheorem{definition}{Definition}[section]
\newtheorem{remark}{Remark}
\newtheorem{example}{Example}

\numberwithin{equation}{section}
\usepackage{chngcntr}
\counterwithin{figure}{section}
\counterwithin{table}{section}

\usepackage{appendix}

\usepackage{mathtools} 

\usepackage{authblk}

\tolerance=9999
\emergencystretch=10pt
\hyphenpenalty=10000
\exhyphenpenalty=100

\newcommand{\A}{\mathscr{A}}
\newcommand{\B}{\mathscr{B}}

\newcommand{\F}{\mathbb{F}}

\newcommand{\Pen}{\mathscr{P}}
\newcommand{\Prj}{\boldsymbol{P}}

\newcommand{\resultant}{\text{resultant}}
\newcommand{\Prb}{\mathbb{P}}
\newcommand{\chr}{\mathrm{char} \,}

\newcommand{\abs}[1]{\left\lvert #1 \right\rvert}
\DeclarePairedDelimiter\autopr{(}{)}
\newcommand{\pr}[1]{\autopr*{#1}}

\title{\textbf{Poncelet Triangles and Tetragons over Finite Fields}}
\author[1,2]{\textbf{Milena Radnovi\'c}\thanks{Email:milena.radnovic@sydney.edu.au}}
\author[1]{\textbf{Ruzzel Ragas}\thanks{Email: ruzzel.ragas@sydney.edu.au}}
\affil[1]{\textit{The University of Sydney, School of Mathematics and Statistics, Carslaw F07, 2006 NSW, Australia}}
\affil[2]{\textit{Mathematical Institute SANU, Belgrade, Kneza Mihaila 36, 11000 Belgrade, Serbia}}
\date{}

\begin{document}

\maketitle

\begin{quote}
\textbf{Abstract:} In the projective plane over a finite field of characteristic not equal to 2, we compute the probability that a randomly selected pair of distinct conics $(\A,\B)$, with $\A$ smooth or singular and $\B$ smooth, in a fixed pencil of conics will admit a triangle or a tetragon inscribed in $\A$ and circumscribed about $\B$. We do this for all pencils, classified up to projective automorphism, with at least one smooth conic; effectively allowing the case where our conic pairs intersect non-transversally.\\
\textbf{MSC:}  60C05, 51N15, 51E15,  11T06, 11G20, 51N35   \\
\textbf{Keywords:} Poncelet's theorem, Cayley's condition, $n$-Poncelet pairs, Finite projective plane 
\end{quote}

\tableofcontents

\section{Introduction}
\label{sec:intro}

Quoted as one of the most beautiful results in projective geometry, Poncelet's theorem has been discussed extensively, together with its vast connection to other areas of mathematics; see, for example \cite{drarad2011poncelet, fla2008poncelet, grihar1977poncelet}. As a brief recall, Poncelet's theorem states that if we have an $n$-sided polygon that is inscribed in a conic $\A$ and circumscribed about another conic $\B$, then we can construct another $n$-sided polygon that is inscribed and circumscribed about the same pair of conics, starting from any point in $\A$ as a vertex. This holds for a projective plane over any field of characteristic not equal to 2 \cite{ber1987geom, jowlia2023poncelet}. In this context, we call such polygon a Poncelet $n$-gon and say that the ordered pair $(\A,\B)$ is an $n$-Poncelet pair. 

In the setting of the projective plane over a finite field of characteristic not equal to 2, the probability of obtaining a $3$-Poncelet pair was discussed in \cite{chi2017ptc}. In this setting, Cayley's condition \cite{grihar1978cayley} and Dickson's classification  of pencils of conics \cite{dic1908penauto} have been utilized to compute bounds for the probability of obtaining a $3$-Poncelet pair among smooth pairs of conics, for each pencil with transversally intersecting generators. Under the same setting, \cite{wan2025pnc} gave exact probabilities of obtaining $3$-Poncelet pairs, and provided bounds for the probabilities of obtaining $4$-Poncelet pairs.
 
We develop on these previous works by considering a larger collection of conic pairs in the projective plane over a finite field of characteristic not equal to 2. In this paper, we allow the conic $\A$ in which the Poncelet polygon is inscribed to be smooth or singular while retaining the smoothness condition for the conic $\B$ about which the Poncelet polygon is circumscribed. With this, we consider every pencil in Dickson's classification that will contain at least one smooth conic, effectively allowing the case where our conic pairs intersect non-transversally. In this setting, we focus on Poncelet triangles and tetragons, and we aim to compute the exact probability of obtaining a $3$-Poncelet pair or $4$-Poncelet pair for each pencil under consideration. Moreover, we discuss in detail the probabilities for $3$-Poncelet pairs in the instance where the characteristic of the finite field is 3.

This paper is organised as follows.
In Section \ref{sec:pon_cay}, we review pencils of conics and Cayley's condition. 
In Section \ref{sec:prob}, we define the collection of conic pairs that we consider in this paper and describe precisely how we obtain a random pair of conics. 
Then, within this framework, we derive the probabilities of obtaining a $3$-Poncelet pair or $4$-Poncelet pair. 
We conclude with final remarks and possible future directions in Section \ref{sec:conclusion}. 
Some details about the classification of pencils of conics over finite fields are provided in Appendix \ref{sec:dicksonclass}.

\subsection{Notation}

We denote by $\F_q$ a finite field with $q$ elements and refer to $q$ as its order. Such a field exists if and only if $q = p^k$ where $k$ is a positive integer and $p$ is prime, which we refer to as the characteristic of our finite field, denoted by char $\F_q$. In this paper, we always work on the base plane $\Prj^2(\F_q)$ with char~$\F_q \neq 2$. 

Conics will be denoted by capital script letters $\A$ and $\B$ with their matrix representation denoted by $A$ and $B$, respectively. Following the notations in \cite{chi2017ptc}, we denote the algebraic closure of $\F_q$ as $\overline{\F_q}$ and given a conic $\A$ in $\Prj^2(\F_q)$, we define the conic $\overline{\A}$ in $\Prj^2(\overline{\F_q})$ as the conic defined by the same equation as $\A$. 

A pencil of conics generated by $\A$ and $\B$ is the collection of conics with matrix representation in $\{ \eta A~+~B \mid\eta \in \Prj^1(\F_q) \}$. 
We follow the approach in \cite{chi2017ptc} by considering pencils of conics up to projective automorphisms using Dickson's classification \cite{dic1908penauto}, which we give in Table \ref{tab:pencil_class_auto} in Appendix \ref{sec:dicksonclass}, see also \cite{hir1998projfinite}. 
For any pencil $\Pen_j$ from Table \ref{tab:pencil_class_auto}, each parameter $\eta \in \Prj^1(\F_q)$ corresponds to the conic with matrix representation $C_j(\eta) = \eta A_j + B_j$ where $A_j$ and $B_j$ are the matrix representations of the generators $\A_j$ and $\B_j$, with the convention that $C_j(\infty) = A_j$. For a comprehensive discussion about projective geometry, see for example, \cite{ric2011projgeom} and \cite{dumfoo2004absalgebra} for that of finite fields.

In this paper, we consider all pencils with at least one smooth element, leaving us with 11 pencils to consider. See Table \ref{tab:pencil_sing_base} in Appendix \ref{sec:dicksonclass} for important properties of these pencils, such as the index $\eta \in \Prj^1(\F_q)$ corresponding to singular elements of the pencil and the coordinates of the base points.

\section{Poncelet's Theorem and Cayley's Condition}
\label{sec:pon_cay}

Now, we state Poncelet's theorem as used in our setting.
\begin{theorem}[Poncelet \cite{ber1987geom, jowlia2023poncelet, pon1822poncelet}] 
    Let $(\A,\B)$ be a pair of conics in $\Prj^2(\F_q)$. If there exists an $n$-sided polygon that is inscribed in $\A$ and circumscribed about $\B$, then every point in $\A$ is a vertex of an $n$-sided polygon that is inscribed in $\A$ and circumscribed about $\B$.
\end{theorem}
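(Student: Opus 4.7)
The plan is to use the classical elliptic-curve interpretation of the Poncelet process, which works for any field of characteristic not $2$. First I would pass to the algebraic closure: the Poncelet step (moving from one vertex to the next along a tangent to $\B$) is defined over $\F_q$, so once closure is established for every point of $\overline{\A}$, restriction recovers the statement for every $\F_q$-point of $\A$. On $\overline{\A}\times\overline{\B}^{*}$, where $\overline{\B}^{*}$ denotes the variety of tangent lines to $\overline{\B}$, consider the incidence curve
\[
E = \{(p,\ell) : p\in\overline{\A},\ \ell\text{ tangent to }\overline{\B},\ p\in\ell\}.
\]
The first projection realises $E$ as a double cover of $\overline{\A}\cong\Prj^1(\overline{\F_q})$, ramified precisely over the intersection points of $\overline{\A}$ and $\overline{\B}$. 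In the generic (smooth, transverse) case there are four ramification points, and Riemann--Hurwitz gives $E$ genus one; after choosing a base point, $(E,O)$ is an elliptic curve over $\overline{\F_q}$.

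Next, I would equip $E$ with two natural involutions: $\sigma(p,\ell)=(p,\ell')$, where $\ell'$ is the second tangent from $p$ to $\overline{\B}$, and $\tau(p,\ell)=(p',\ell)$, where $p'$ is the second intersection of $\ell$ with $\overline{\A}$. The Poncelet step is precisely $T=\tau\circ\sigma$. Any involution of an elliptic curve with a fixed point has the form $x\mapsto c-x$, and the composition of two such involutions is a translation $T(x)=x+t$ by a fixed element $t\in E$. The hypothesis that one inscribed--circumscribed $n$-gon exists says that $T^{n}$ fixes some point of $E$, which forces $nt=O$. Since this is a condition on $t$ alone, $T^{n}$ is the identity on all of $E$, so the closure property holds starting from every point of $\overline{\A}$ -- and, descending, from every $\F_q$-rational point of $\A$.

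The main obstacle is the degenerate cases permitted by the paper's framework: when $\A$ is singular, or when $\A$ and $\B$ fail to meet transversally, the curve $E$ is no longer a smooth genus-one curve but a rational, nodal, cuspidal, or reducible one. In these situations one must replay the involution argument on the appropriate degeneration -- typically the smooth locus of $E$ is still a commutative algebraic group (a copy of $\mathbb{G}_m$ or $\mathbb{G}_a$) -- and verify that $\tau\circ\sigma$ remains a translation whose order is independent of the base point; alternatively, one can deduce the degenerate cases by specialisation from the generic transverse case. Either way, the crux of the argument is the same: reducing Poncelet closure to the statement that a fixed translation on a one-dimensional group has constant order.
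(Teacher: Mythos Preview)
The paper does not actually prove this theorem: it is stated with citations to \cite{ber1987geom, jowlia2023poncelet, pon1822poncelet} and then used as a black box, so there is no proof in the paper to compare your proposal against. Your sketch is the standard Griffiths--Harris elliptic-curve argument, which is essentially what those references contain (in particular \cite{grihar1977poncelet}, also cited in the paper's introduction), so in that sense your approach is aligned with the literature the paper relies on.

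One small caveat worth flagging: the paper explicitly says, right after the theorem, that the statement of Poncelet's theorem and Cayley's condition ``works for $(\overline{\A},\overline{\B})$ in $\Prj^2(\overline{\F_q})$'' and then \emph{defines} an $n$-Poncelet pair in $\Prj^2(\F_q)$ to be one whose induced pair over the closure is $n$-Poncelet. So the descent step you describe at the end of your first paragraph is not something the paper needs or performs; it simply works over $\overline{\F_q}$ throughout and pulls the label back by fiat. Your handling of the degenerate cases (singular $\A$, non-transverse intersection) via the smooth locus of $E$ as a one-dimensional algebraic group is the right idea, and is more than the paper itself offers for those cases.
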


We note that the Poncelet theorem in finite projective planes was studied in \cite{hk2018}.

Poncelet's theorem tells us that the number of sides of a polygon inscribed in the conic $\A$ and circumscribed about $\B$ is only dependent on the pair $(\A,\B)$. 
Thus we can ask whether a pair $(\A,\B)$ is an $n$-Poncelet pair, which can be answered using Cayley's condition.
Here we state that condition for the triangle and tetragon:

\begin{theorem}[Cayley \cite{cay1854cayley, grihar1978cayley}] 
    \label{thm:cayley}
    Consider the formal series expansion
    \begin{equation*}
        \sqrt{\det(tA+B)} = H_0 + H_1 t + H_2 t^2 + H_3 t^3 + \cdots
    \end{equation*}
    where $A$ and $B$ are the matrix representations for conics $\A$ and $\B$, respectively.

    Then, 
    \begin{itemize}
        \item $(\A,\B)$ is a $3$-Poncelet pair if and only if $H_2 = 0$.
        \item $(\A,\B)$ is a $4$-Poncelet pair if and only if $H_3 = 0$.
    \end{itemize}
\end{theorem}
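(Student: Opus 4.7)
The plan is to reduce the Poncelet closure condition to an $n$-torsion condition on a suitable algebraic curve of genus one, and then to convert that torsion condition into explicit equations in the Taylor coefficients $H_k$ of $\sqrt{\det(tA+B)}$. The curve in question is
\[
    E : y^2 = \det(tA+B),
\]
which, since $A$ and $B$ are $3\times 3$ symmetric matrices, is a double cover of $\Prj^1$ ramified at the (at most three) values of $t$ for which $tA+B$ is singular. In the generic situation these three values are distinct and $E$ is an elliptic curve with a distinguished point $O$ lying over $t = \infty$.

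Next I would interpret the Poncelet dynamics on $E$. Following the classical construction, points of $E$ parametrize pairs consisting of a conic in the pencil together with one of the two ``tangent structures'' attached to it, and the map sending a vertex $P \in \A$ to the next vertex $P'$ (obtained by drawing a tangent line to $\B$ through $P$ and intersecting again with $\A$) lifts to translation on $E$ by a fixed element $Q \in E$ depending only on $(\A,\B)$. Closure of the Poncelet process after $n$ steps is then equivalent to $nQ = O$ in the group law of $E$.

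Third, I would translate the torsion condition $nQ = O$ into vanishing of an explicit expression in the $H_k$. Expanding $\sqrt{\det(tA+B)}$ as a formal series in $t$ produces a local parameter at $O$, and the coefficients $H_k$ are the successive Laurent coefficients of a distinguished rational function on $E$ with a prescribed pole at $O$. The existence of a meromorphic function on $E$ with a pole of order exactly $n$ at $O$, which by Riemann--Roch is equivalent to $Q$ being $n$-torsion, translates into the vanishing of a Hankel-type determinant in the $H_k$. For $n=3$ this determinant collapses to the single entry $H_2$, and for $n=4$ to $H_3$, matching the stated criteria.

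The main obstacle will be the degenerate regime central to this paper: when $\A$ is singular, when the pencil intersects non-transversally, or when $\mathrm{char}\,\F_q = 3$, the cubic $\det(tA+B)$ has repeated roots and $E$ becomes a singular curve, so the elliptic-curve argument must be replaced by a generalized-Jacobian argument or, more robustly, by a direct algebraic verification. Since the conditions $H_2 = 0$ and $H_3 = 0$ are polynomial in the entries of $A$ and $B$, one can alternatively confirm the equivalence by comparing the formal square-root expansion with an explicit parametrization of closing triangles and tetragons; this route bypasses the curve-theoretic input entirely and is best suited to the finite-field setting used throughout the paper.
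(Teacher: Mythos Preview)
The paper does not prove this theorem at all: it is stated with citations to Cayley and to Griffiths--Harris and then used as a black box throughout Section~\ref{sec:prob}. So there is no ``paper's own proof'' to compare against; the authors explicitly outsource the result to the classical literature.

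Your outline is essentially the Griffiths--Harris argument from \cite{grihar1978cayley}, and the broad strokes are right: Poncelet closure becomes a torsion condition on the genus-one curve $y^2=\det(tA+B)$, and the torsion condition is then rewritten via Riemann--Roch as the vanishing of a Hankel minor in the $H_k$. One point to tighten: the sentence ``the existence of a meromorphic function on $E$ with a pole of order exactly $n$ at $O$ \ldots\ is equivalent to $Q$ being $n$-torsion'' is not quite the mechanism. What you actually need is a function with divisor $n[Q]-n[O]$; the Hankel determinant arises because such a function, if it exists, must lie in the span of $1,t,\dots,t^m,y,ty,\dots$ inside $L(n[O])$, and the coefficients $H_k$ govern the linear relations among these generators near $O$. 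For $n=3,4$ the relevant minor is $1\times 1$, namely $H_2$ or $H_3$, which is why the criterion collapses so cleanly. Your remarks on the degenerate cases (singular $\A$, repeated roots of the cubic, characteristic~$3$) are apt, and indeed the paper sidesteps all of this by working formally with the power-series identity and never invoking the curve $E$ directly.
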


More precisely, our statement of Poncelet's theorem and Cayley's condition works for $(\overline{\A},\overline{\B})$ in $\Prj^2(\overline{\F_q})$. In this paper, since our condition only depends on the matrix representations $A$ and $B$, we say that $(\A,\B)$ is an $n$-Poncelet pair in $\Prj^2(\F_q)$ if the induced pair $(\overline{\A},\overline{\B})$ in $\Prj^2(\overline{\F_q})$ is an $n$-Poncelet pair. Geometrically, this is equivalent to having at least one of the vertices of our Poncelet $n$-gon in $\A$. 

If $\A$, $\B$ correspond to the parameters $r$, $s$ of a given pencil, the conditions $H_2=0$ and $H_3=0$ in Theorem \ref{thm:cayley} will take the form of a polynomial equality in $r$ and $s$.
This will be explored and used in the next section.

\section{Computation of Probabilities}
\label{sec:prob}

In this section, we define precisely the probability spaces under consideration and derive the probabilities of obtaining a $3$-Poncelet pair or a $4$-Poncelet pair.

First, we need to restrict our consideration to the pairs $(\A,\B)$ of conics which allow the construction of Poncelet polygons.
First, the tangent lines are well defined only for non-singular conics, thus $\B$ must be smooth.
We can inscribe a polygon in a conic which is the union of two lines, so that the vertices alternate between them \cite{drarad2025poncelet}.
If $\A$ is singular, we will exclude the following cases:
\begin{itemize}
	\item $\A$ is a double line. Then every Poncelet polygon inscribed in $\A$ has a single vertex. Indeed, each tangent line to $\B$ passing through the starting vertex intersects $\A$ only at that point, treated as an intersection of multiplicity 2.

\item 
$\A$ is the union of two lines, with one them being tangent to $\B$.
In this case, we eventually obtain a vertex lying in a line tangent to $\B$ which coincides with one of the lines making up $\A$, when the construction terminates, since the next vertex is no longer well-defined.
\end{itemize}

\begin{definition}
A pair of conics $(\A,\B)$ is \emph{valid} if those two conics are distinct, $\B$ is smooth and $\A$ is either smooth or the union of two distinct lines which are not tangent to $\B$.
\end{definition}

\begin{remark}
The following pencils from Table \ref{tab:pencil_sing_base} 
contain a double line, which corresponds to the parameter $r$:
	\begin{itemize}
		\item in $\Pen_6$ for $r=0$;
		\item in $\Pen_8$ and $\Pen_{17}$ for $r=\infty$. 
	\end{itemize}
In the following pencils from that table, a pair of conics $(\A,\B)$, corresponding to the parameters $(r,s)$ is such that $\A$ is the union of two lines, one of which is tangent to $\B$: 	
\begin{itemize}
\item in $\Pen_4$, $\Pen_6$, $\Pen_{15}$, for $r=\infty$, $s\neq0$;
\item in $\Pen_5$, for $r=\infty$ and $s\neq\infty$;
\item in $\Pen_{17}$, for $r=0$ and $s\neq0$. 
	\end{itemize}
\end{remark}

\begin{definition}
In the pencil $\Pen_j$, denote by $\Phi_j$ the set of all valid pairs.
and by $\Psi_j$ its subset of all smooth pairs of conics.
\end{definition}

Note that each of the sets $\Phi_j$, $\Psi_j$ is non-empty for $j\in\{3,4,5,6,8,14,15,16,17,18,19\}$, except for $\Psi_3$ when $q=3$. 
In \cite{chi2017ptc, wan2025pnc}, the sample spaces $\Psi_j$ were considered, with the uniform measure.
In this paper, the sample space is extended to $\Phi_j$.
While other variants for choosing a probability measure are conceivable, in order to allow comparison with the existing results \cite{chi2017ptc, wan2025pnc}, we also adopt here the uniform measure.

\begin{example}
    As an illustration, we consider the pencil $\Pen_3$ in the projective plane over $\F_7$.
  For each valid pair in that pencil, we summarize in Table \ref{tab:ndist_pen3_f7} the values of $n$ such that the pair $(r,s)$ corresponds to an $n$-Poncelet pair. 
  Giving each valid pair in pencil $\Pen_3$ an equal chance of being selected, the probability of selecting a $3$-Poncelet pair is equal to the proportion of $n=3$, which is $\frac{2}{35}$. Similarly, the probabilities of selecting an $n$-Poncelet pair for $n=4$, $n=6$, $n=8$ are respectively $\frac{3}{7}$, $\frac{12}{35}$, $\frac{6}{35}$. For all other values of $n$, there are no $n$-Poncelet pairs in this pencil, so the probability equals zero for $n\not\in\{3,4,6,8\}$.
  
  On the other hand, restricting to the subset $\Psi_j$ of all smooth pairs in $\Pen_3$, the probabilities for a randomly selected pair being an $n$-Poncelet pair equal $\frac1{10}$, $\frac35$, $\frac3{10}$ for $n=3$, $n=4$, $n=6$ respectively and $0$ for all other values of $n$.
\end{example}

\begin{table}[!htb]
\centering
\caption{$n$-Poncelet pairs in $\Prj^2(\F_7)$. 
${}^{\dag}$ denotes singular elements of the pencil.}
\begin{tabular}{|l|c|c|c|c|c|c|c|c|}
\hline
\diagbox{r}{s}  & \multicolumn{1}{l|}{0 ${}^{\dag}$} & \multicolumn{1}{l|}{1 ${}^{\dag}$} & \multicolumn{1}{l|}{2} & \multicolumn{1}{l|}{3} & \multicolumn{1}{l|}{4} & \multicolumn{1}{l|}{5} & \multicolumn{1}{l|}{6} & \multicolumn{1}{l|}{$\infty {}^{\dag}$} \\ \hline
0  ${}^{\dag}$      &                        &                        & {4}                     & 8                      & 6                      & 8                      & 6                      &                               \\ \hline
1  ${}^{\dag}$      &                        &                        & 6                      & 8                      & 6                      & 8                      & {4}                      &                               \\ \hline
2        &                        &                        &                        & {4}                      & {4}                      & {4}                      & {4}                      &                               \\ \hline
3        &                        &                        & 6                      &                        & 6                      & {3}                      & 6                      &                               \\ \hline
4       &                        &                        & {4}                      & {4}                      &                        & {4}                      & {4}                      &                               \\ \hline
5        &                        &                        & 6                      & {3}                      & 6                      &                        & 6                      &                               \\ \hline
6        &                        &                        & {4}                      & {4}                      & {4}                      & {4}                      &                        &                               \\ \hline
$\infty {}^{\dag}$ &                        &                        & 6                      & 8                      & {4}                      & 8                      & 6                      &                               \\ \hline
\end{tabular}
\label{tab:ndist_pen3_f7}
\end{table}

\subsection{Probabilities for Poncelet Triangle}

 We first derive the form of $H_2$ from Cayley's condition in terms of $r$ and $s$ for each pencil $\Pen_j$. We use the notation $H_{2,j}$ to refer to the form of $H_2$ for pencil $\Pen_j$ and summarize them in Table \ref{tab:h2_summary}.

\begin{table}[!htb]
\centering
\renewcommand{\arraystretch}{1.6}%
\caption{Summary of Cayley's condition for the triangle case}
\begin{tabular}{c|l}
    \hline
    $\Pen_j$ & $H_{2,j}{}^{\dag}$ \\
    \hline
    $\Pen_{3}$ &  $H_{2,3} (r,s) = r^2 + (-4s^3 + 6s^2 - 4s)r + s^4$ \\
    &  $H_{2,3}(\infty,s) = u {}^{\ddag}$ \\
    \hline
    $\Pen_{4}$ & $ H_{2,4} (r,s) = 4r-s$ \\
    & $H_{2,4}(\infty,s) = 0$ \\
    \hline
    $\Pen_{5}$ & $ H_{2,5} (r,s) = 3$ \\
    & $H_{2,5}(\infty,s) = 0$ \\
    \hline
    $\Pen_{6}$ & $ H_{2,6} (r,s) = 4r-s$ \\
    & $H_{2,6}(\infty,s) = 0$ \\
    \hline
    $\Pen_{8}$ & $ H_{2,8} (r,s) = 3$ \\
    & $H_{2,8}\infty,s) = 0$ \\
    \hline 
    $\Pen_{14}$ & $ H_{2,14} (r,s) = (1-4e)r^2 + (-4 e^2 s^3 + 6 e s^2 + 4 e s - 4 s + 2)r + (e^2 s^4 - 6 e s^2 + 4 s - 3)$ \\
    & $H_{2,14}(\infty,s) = u {}^{\ddag}$ \\
    \hline
    $\Pen_{15}$ & $ H_{2,15} (r,s) = 4r-s$ \\
    & $H_{2,15}(\infty,s) = 0$ \\
    \hline
    $\Pen_{16}$ & $ \begin{aligned}
        H_{2,16} (r,s) = \, & [(1-4d)(1-4e)] r^2  \\
        & + [- 4 s^3 + 6 s^2 + (-16 d e + 4 d + 4 e - 4)s + (-8 d e + 2 d + 2 e)] r \\
        &+ [s^4 + (24 d e - 6 d - 6 e) s^2 + (-16 d e + 4 d + 4 e)s \\
        &+ (-48 d^2 e^2 + 24 d^2 e + 24 d e^2 - 3 d^2 - 6 d e - 3 e^2)]
    \end{aligned}$ \\
    & $H_{2,16}(\infty,s) = u {}^{\ddag}$ \\
    \hline
    $\Pen_{17}$ & $ H_{2,17}(r,s) = r (r - 4s)$ \\
    & $H_{2,17}(\infty,s) = u {}^{\ddag}$ \\
    \hline 
    $\Pen_{18}$ & $\begin{aligned}
        H_{2,18} (r,s) = \, & [3 s^4 + 4bs^3 +6cs^2 + 12s + (4b-c^2)] r^2  \\
        & + [2 b s^4 + (4 b^2 - 4 c) s^3 + (6 b c - 18) s^2 + (-4 b + 4 c^2) s + 2 c] r \\
        &+ [(-b^2 + 4 c) s^4 + 12 s^3 + 6 b s^2 + 4 c s + 3]
    \end{aligned}$ \\
    & $H_{2,18}(r,\infty) =  3 r^2  + 2 b r + (- b^2 + 4 c)$ \\
    & $H_{2,18}(\infty,s) =  3 s^4 + 4bs^3 +6cs^2 + 12s + (4b-c^2)$ \\
    \hline
    $\Pen_{19}$ & $  \begin{aligned}
        H_{2,19} (r,s) = \, & [\rho^2 - 4\nu \sigma^2] r^2  \\
        & + [-4 \nu^2 s^3 - 6 \nu\rho s^2  + (4 \nu \sigma^2 - 4 \rho^2) s - 2 \rho \sigma^2] r \\
        &+ [\nu^2 s^4 - 6 \nu \sigma^2 s^2  - 4 \rho \sigma^2 s  - 3 \sigma^4]
    \end{aligned}$ \\
    & $H_{2,19}(\infty,s) = u {}^{\ddag}$ \\
    \hline 
\end{tabular}

\label{tab:h2_summary}

\flushleft

\footnotesize{$^\dag$ $H_{2,j} = H_2$ from Cayley's condition for pencil $\Pen_j$ where non-zero scalar multiples and denominators are ignored}\\
\footnotesize{$^\ddag$ $u$ is a non-zero constant for char $\F_q \neq 2$} \\

\end{table}

With the aid of Table \ref{tab:h2_summary}, we can now count the number of valid pairs in $\Pen_j$ that correspond to a $3$-Poncelet pair by counting the number of pairs $(r,s)$ such that $H_{2,j}(r,s) = 0$. Since we only consider distinct conic pairs, we have Lemma \ref{lem:h2_ss_0} based on \cite{wan2025pnc}, which will help us adjust our counts for the case where $r=s$.
\begin{lemma}
    \label{lem:h2_ss_0}
    Let $H_{2,j}$ be the form of $H_2$ under $\Pen_j$ and $C_j(s)$ be the matrix representation of the element corresponding to $s \in \Prj^1(\F_q)$ in pencil $\Pen_j$. Then,
    \begin{enumerate}
        \item $H_{2,j}(s,s) = 0$ if and only if char $\F_q = 3$ or $C_j(s)$ is singular.
        \item if $C_j(r)$ is singular then $H_{2,j}(r,s) = 0$ if and only if $r=s$.
    \end{enumerate}
\end{lemma}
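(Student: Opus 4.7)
\emph{Part (1).} My plan is to substitute $r=s$ directly. When $r=s$ we have $A=B=C_j(s)$, so
\[
tA+B=(t+1)C_j(s),\qquad \det(tA+B)=(1+t)^3\det C_j(s),
\]
and therefore $\sqrt{\det(tA+B)}=(1+t)^{3/2}\sqrt{\det C_j(s)}$. Reading off the $t^2$ coefficient via the binomial expansion $(1+t)^{3/2}=1+\tfrac{3}{2}t+\tfrac{3}{8}t^2+\cdots$ gives Cayley's
\[
H_2 = \tfrac{3}{8}\sqrt{\det C_j(s)}.
\]
Since $H_{2,j}(r,s)$ is obtained from $H_2$ by clearing denominators and discarding non-zero scalar factors, the condition $H_{2,j}(s,s)=0$ is equivalent to $H_2=0$, i.e.\ to $3=0$ in $\F_q$ (so char~$\F_q=3$) or $\det C_j(s)=0$ ($C_j(s)$ singular).

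\emph{Part (2).} The plan is to use the pencil identity to factor $\det(tA+B)$ explicitly. Since
\[
tA+B = (tr+s)A_j + (t+1)B_j = (t+1)\,C_j\!\bigl(\tau(t)\bigr),\qquad \tau(t)=\frac{tr+s}{t+1},
\]
and $\tau(t)-r=(s-r)/(t+1)$, whenever $C_j(r)$ is singular I can write $\det C_j(\tau)=(\tau-r)P(\tau)$ with $\deg P\le 2$ and obtain
\[
\det(tA+B)=(t+1)^3(\tau(t)-r)P(\tau(t))=(t+1)^2(s-r)\,P(\tau(t)).
\]
Hence $\sqrt{\det(tA+B)}=(1+t)\sqrt{(s-r)P(\tau(t))}$. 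Taylor-expanding the right factor in $t$ using $\tau(0)=s,\ \tau'(0)=r-s,\ \tau''(0)=2(s-r)$, the terms linear in $P'(s)$ cancel and the $t^2$ coefficient simplifies to
\[
H_2=\frac{(s-r)^4\,\bigl[\,2P(s)P''(s)-(P'(s))^2\,\bigr]}{8\,\bigl(\det C_j(s)\bigr)^{3/2}}.
\]
Because $\deg P\le 2$, the bracketed factor is constant in $s$, equal up to sign to the discriminant of $P$, so the condition $H_{2,j}(r,s)=0$ reduces to $(s-r)^4=0$, i.e.\ $r=s$, provided $\operatorname{disc}P\ne 0$.

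\emph{Main obstacle.} The work left is to certify $\operatorname{disc}P\ne 0$ for every singular index $r$ contributing to the lemma's operative scope. I would dispatch this by going through the eleven pencils one by one against the data in Table~\ref{tab:pencil_sing_base}: in every configuration in which $C_j(r)$ is singular and can still be paired with a smooth $C_j(s)$ in a non-irregular pair, the remaining roots of $\det C_j(\tau)$ turn out to be simple and distinct from one another, so $P$ is never a perfect square. Any degenerate situation forcing $\operatorname{disc}P=0$ (for instance, pencils in which $\det C_j(\tau)$ collapses to a single linear factor at $r$, so $P$ is a non-zero constant) coincides with a configuration already excluded as an irregular pair above, and so falls outside the lemma's operative domain.
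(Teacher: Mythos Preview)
Part~(1) matches the paper's argument exactly.

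For Part~(2) your route diverges from the paper's. The paper proceeds by brute substitution: it plugs each singular index $r$ from Table~\ref{tab:pencil_sing_base} into the explicit polynomials $H_{2,j}(r,s)$ of Table~\ref{tab:h2_summary} and reads off that the result is a power of $(s-r)$ (illustrated only for $\Pen_3$, with the remaining pencils declared ``similar''). Your argument instead factors $\det(tA+B)=(1+t)^2(s-r)P(\tau(t))$ through the pencil parameter and obtains the closed form
\[
H_2=\frac{(s-r)^4\bigl[2P(s)P''(s)-(P'(s))^2\bigr]}{8\bigl(\det C_j(s)\bigr)^{3/2}},
\]
which is a genuine explanation of \emph{why} each row of Table~\ref{tab:h2_summary} collapses to a power of $s-r$ at singular $r$. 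What you buy is that the residual case-work shrinks to checking that the bracketed constant is nonzero, i.e.\ that the remaining factor $P$ of $\det C_j$ has no repeated root; what the paper's approach buys is that Table~\ref{tab:h2_summary} is already computed, so each substitution is a one-line check with no further theory.

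Two remarks. First, your substitution $\tau(t)=(tr+s)/(t+1)$ assumes $r\in\F_q$, so the singular index $r=\infty$ (present in ten of the eleven pencils) is not covered by your displayed formula; a short parallel computation with $\det(tA_j+C_j(s))=\det C_j(t+s)$ handles it, but this should be said explicitly. Second, your closing caveat that the degenerate cases with $2PP''-(P')^2=0$ are precisely irregular pairs is correct and, in fact, more careful than the paper's own proof: as literally stated, Part~(2) fails e.g.\ at $H_{2,17}(0,s)\equiv 0$ and $H_{2,4}(\infty,s)\equiv 0$, and the paper's ``other cases are proven similarly'' does not flag this. Restricted to the lemma's intended scope (valid, non-irregular pairs), both arguments are complete.
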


\begin{proof}
    Let $A = B = C_j(s)$, then $\sqrt{det(tA+B)} = \sqrt{(t+1)^3 \det(C_j(s))}$ which yields $H_{2,j}(s,s) = \frac{3}{8}\sqrt{\det(C_j(s))}$ which is zero if and only if char $\F_q = 3$, or $A = C_j(s)$ is singular.

    Now, suppose $C_j(r)$ is singular, we can use Table \ref{tab:h2_summary} to substitute every value of $r$ associated with a singular element as summarized in Table \ref{tab:pencil_sing_base}. We need to do this for all pencils, but we will only illustrate this in $\Pen_3$ as the other cases are proven similarly. The singular elements for this pencil are $\eta \in \{0,1,\infty\}$. Substituting,
    \begin{equation*}
        H_{2,3}(0,s) = s^4 , H_{2,3}(1,s) = (s-1)^4 , H_{2,3}(\infty,s)\neq 0 ,\, \forall s \in \F_q
    \end{equation*}
    where we see that $H_{2,3}(r,s) = 0$ if and only if $r=s$.
\end{proof}

We will use Lemma \ref{lem:h2_ss_0} to remove pairs $(r,s)$ that satisfy $H_{2,j}(r,s) = 0$ but should not be counted. Moreover, it proves that for the triangle case, all valid pairs that correspond to $3$-Poncelet pairs are smooth pairs. That is, there are no $3$-Poncelet pairs $(\A,\B)$ where $\A$ is singular and $\B$ is smooth. 

We can now start counting for each pencil $\Pen_j$ the number of valid pairs $(r,s)$ satisfying $H_{2,j}(r,s) = 0$. We will mainly use the discriminant $D(f)$ of a polynomial $f$ to facilitate the counting. In using discriminants, it will be important to know whether something is a non-zero square or zero in $\F_q$. With this, we define 

$Z^{*}_\varphi = \{s \in \F_q \, | \, \varphi(s) = y^2, \, y \in \F_q \setminus \{0\} \}$ = set of values which makes $\varphi(s)$ a non-zero square,

$Z_\varphi = \{s \in \F_q \, | \,  \varphi(s) = 0 \}$ = set of roots of $\varphi(s)$.

We first deal with the case where $H_{2,j}(r,s)$ is quadratic in $r$; that is, $j \in \{3,14,16,18,19\}$. Here we treat $H_{2,j}(r,s)$ as a polynomial in $(F_q[s])[r]$, the collection of polynomials in $r$ having polynomials in $s$ as coefficient. This makes the discriminant of $H_{2,j}(r,s)$ a function of $s$, and we denote this discriminant by $D_{j}(s)$. For a fixed $s$, $D_j(s)$ will tell us how many pairs $(r,s)$ satisfy $H_{2,j}(r,s) = 0$, and summing up all these counts for all valid values of $s$, we obtain the exact count that we want. In particular, each $s \in Z_{D_j}$ will contribute one pair and $s \in Z^{*}_{D_j}$ will contribute two pairs. Upon computing the discriminants, we observe that all of them take the form
\begin{equation}
    \label{eqn:discriminant_2}
    D_j(s) = 16 \pr{\kappa_j(s)}^2 \varphi_j(s),\, j \in \{3,14,16,18,19\}
\end{equation}
for some $\kappa_j(s), \varphi_j(s) \in \F_q[s]$.

We summarize in Tables \ref{tab:h2_root_kappa_summary} and \ref{tab:h2_disc_varphi_summary} useful information about $\kappa_j(s)$ and $\varphi_j(s)$, respectively.

\begin{table}[!htb]
\centering
\renewcommand{\arraystretch}{1.5}%
\caption{Summary of roots in $\F_q$ for $\kappa_j(s)$}
\begin{tabular}{c|l|l}
    $\Pen_j$ & $\kappa_{j}(s)^\dag$ & Roots in $\F_q$\\
    \hline
    $\Pen_{3}$ & $\kappa_{3}(s)=s(s-1)$ & $0,1$ \\
    \hline
    $\Pen_{14}$ & $\kappa_{14}(s)=e s^2 - s + 1$ & None \\
    \hline
    $\Pen_{16}$ & $\kappa_{16}(s)=s^2 - s + (-4 d e + d + e) $ & $\eta_{+} {}^{**},\eta_{-} {}^{**}$ \\
    \hline
    $\Pen_{18}$ & $\kappa_{18}(s)=s^3 + b s^2 + c s + 1  $ & None\\
    \hline
    $\Pen_{19}$ & $\kappa_{19}(s)=\nu s^2 + \rho s + \sigma^2$ & None\\
    \hline 
\end{tabular}

\label{tab:h2_root_kappa_summary}

\flushleft

\footnotesize{$^\dag$ $\kappa_j(s)$ as in Equation \ref{eqn:discriminant_2} }\\
\footnotesize{$^{**}$ $\eta_{+}, \eta_{-}$ as in Table \ref{tab:pencil_sing_base} }

\end{table}

\begin{table}[!htb]
\centering
\renewcommand{\arraystretch}{1.5}%
\caption{Summary of discriminants for $\varphi_j(s)$}
\begin{tabular}{c|l|l}
    $\Pen_j$ & $\varphi_{j}(s)^\dag$ & Discriminant $D(\varphi_j)$\\
    \hline
    $\Pen_{3}$ & $\varphi_{3}(s)=s^2 - s + 1$ & $-3$ \\
    \hline
    $\Pen_{14}$ & $\varphi_{14}(s)=e^2 s^2 - e s + (- 3 e + 1)$ & $-3e^2(1-4e)$ \\
    \hline
    $\Pen_{16}$ & $\varphi_{16}(s)=s^2 - s + (12 d e - 3 d - 3 e + 1) $ & $-3(1-4d)(1-4e)$ \\
    \hline
    $\Pen_{18}$ & $\varphi_{18}(s)=(b^2 - 3 c) s^2 + (b c - 9) s + (-3b + c^2) $ & $-3(b^2c^2-4c^3-4b^3-27+18bc)$\\
    \hline
    $\Pen_{19}$ & $\varphi_{19}(s)=\nu^2 s^2 + \nu \rho s + (- 3 \nu \sigma^2 + \rho^2)$ & $-3\nu^2(\rho^2 - 4\nu\sigma^2) $\\
    \hline 
\end{tabular}

\label{tab:h2_disc_varphi_summary}

\flushleft

\footnotesize{$^\dag$ $\varphi_j(s)$ as in Equation \ref{eqn:discriminant_2}}\\

\end{table}

We state important observations about $\kappa_j$ and $\varphi_j(s)$ in Lemma \ref{lem:kappa_0_sing} and Lemma \ref{lem:varphi_disc0_char3}.

\begin{lemma}
    \label{lem:kappa_0_sing}
    For any $\kappa_j(s)$ in Table \ref{tab:h2_root_kappa_summary} and $s \in \F_q$, then $\kappa_j(s) = 0$ if and only if $C_j(s)$, the matrix representation of the element corresponding to $s$ in pencil $\Pen_j$, is singular.
\end{lemma}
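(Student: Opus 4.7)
The strategy is to verify the equivalence by explicit computation of $\det C_j(s)$ for each of the five pencils indexed in Table \ref{tab:h2_root_kappa_summary}, namely $j \in \{3, 14, 16, 18, 19\}$. For each such pencil, I would retrieve the generator matrices $A_j$ and $B_j$ from Table \ref{tab:pencil_sing_base}, form the symmetric $3 \times 3$ matrix $C_j(s) = sA_j + B_j$, and expand the determinant as a polynomial in $s$ of degree at most three.

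The goal is then to show that $\det C_j(s)$ and $\kappa_j(s)$ agree up to a nonzero scalar in $\F_q$, which immediately yields $\kappa_j(s) = 0 \iff \det C_j(s) = 0 \iff C_j(s)$ is singular, for every $s \in \F_q$. A first sanity check is that the degrees line up. For $j \in \{3, 14, 16, 19\}$ the matrix $A_j$ is itself singular (the index $s = \infty$ appears among the singular indices of Table \ref{tab:pencil_sing_base}), so the leading coefficient of $\det C_j(s)$, which is $\det A_j$, vanishes and the determinant drops from the generic degree 3 down to 2, matching $\deg \kappa_j = 2$. For $\Pen_{18}$, $A_{18}$ is nonsingular and $\kappa_{18}$ is cubic, again consistent. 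Beyond the degree match, the finite singular indices listed in Table \ref{tab:pencil_sing_base} (for instance $0$ and $1$ for $\Pen_3$, and $\eta_{+}, \eta_{-}$ for $\Pen_{16}$) should exactly reproduce the roots of $\kappa_j$ in Table \ref{tab:h2_root_kappa_summary}, and the remaining scalar constant is pinned down by comparing a single nonzero coefficient.

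The main obstacle is purely computational bookkeeping: five determinants must be expanded, some involving the symbolic parameters $b, c, d, e, \nu, \rho, \sigma$ from Dickson's classification. No deeper conceptual ideas intervene; once each $\det C_j(s)$ is in hand, the identification with $\kappa_j(s)$ is immediate by inspection. A potential shortcut is to invoke Lemma \ref{lem:h2_ss_0}(2): whenever $C_j(s)$ is singular, the quadratic $H_{2,j}(r,s)$ in $r$ has $r = s$ as its unique (hence double) root, forcing the discriminant $D_j(s) = 16\,\kappa_j(s)^2\,\varphi_j(s)$ to vanish; this recovers one direction but still requires ruling out $\varphi_j(s) = 0$ (for $s \in \F_q$) as an alternative source of singularity. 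The cleanest route therefore remains direct computation of the determinants followed by coefficient matching against Table \ref{tab:h2_root_kappa_summary}.
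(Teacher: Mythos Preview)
Your approach is correct and in fact slightly cleaner than the paper's. The paper does not compute $\det C_j(s)$ at all; instead it takes the finite singular indices for each $\Pen_j$ as already tabulated in Table~\ref{tab:pencil_sing_base}, separately determines the roots of each $\kappa_j$ in $\F_q$ (via the quadratic formula for $\kappa_{16}$, and via the irreducibility hypotheses on $T^2+T+e$, $T^3+bT^2+cT+1$, and the non-square condition on $\rho^2-4\nu\sigma^2$ to conclude $\kappa_{14},\kappa_{18},\kappa_{19}$ have no roots in $\F_q$), and then observes that the two lists match. Your route establishes the stronger identity $\det C_j(s)=c_j\,\kappa_j(s)$ with $c_j\in\F_q^\times$, which subsumes the root comparison and does not depend on Table~\ref{tab:pencil_sing_base} being precomputed; the paper's route avoids expanding five $3\times3$ determinants but leans on the irreducibility assumptions and an external table. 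One small correction: the generator matrices $A_j,B_j$ are listed in Table~\ref{tab:pencil_class_auto}, not Table~\ref{tab:pencil_sing_base}.
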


\begin{proof}
    Comparing the singular elements of the pencil in Table \ref{tab:pencil_sing_base} and the roots of $\kappa_j(s)$ in Table \ref{tab:h2_root_kappa_summary}, we can see that for $s \in \F_q, \, \kappa_j(s)$ is zero if and only if $C_j(s)$ is singular. We just need to show how we obtained the roots of $\kappa_j(s)$. 
    
    Roots of $\kappa_3$ are straightforward, and the quadratic formula is used to obtain those of $\kappa_{16}$. Referring to the assumptions on Table \ref{tab:pencil_class_auto}, $T^2 + T + e$ is irreducible, which implies that $1-4e$ is non-square in $\F_q$. But this is also the discriminant of $\kappa_{14}$, so it is also irreducible and does not have any root in $\F_q$. $\kappa_{18}$ follows from the assumption that $T^3 + bT^2 + cT + 1$ is irreducible. Finally, $\kappa_{19}$ has discriminant $\rho^2 - 4\nu\sigma^2$ which is assumed to be non-square.
\end{proof}

\begin{lemma}
    \label{lem:varphi_disc0_char3}
    Let $D(\varphi_j)$ be any discriminant in Table \ref{tab:h2_disc_varphi_summary}, then $D(\varphi_j) = 0$ if and only if char $\F_q = 3$.  That is, $\varphi_j(s)$ is a square of a linear polynomial in $\F_q[s]$ if and only if char $\F_q = 3$. 
\end{lemma}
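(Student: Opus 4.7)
The plan is to exploit the common factor $-3$ appearing in every discriminant listed in Table \ref{tab:h2_disc_varphi_summary}, and then argue that the remaining factor is always nonzero under the standing assumptions on each pencil recorded in Table \ref{tab:pencil_class_auto}.

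First I would dispatch the easy direction: each discriminant has the shape $D(\varphi_j) = -3 \cdot M_j$ for some expression $M_j$ in the pencil parameters, so if $\mathrm{char}\,\F_q = 3$ then $D(\varphi_j)=0$ immediately, for all $j \in \{3,14,16,18,19\}$.

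For the converse I would handle the five pencils one at a time, in each case invoking the defining assumption of the pencil to certify $M_j \neq 0$. For $\Pen_3$, $M_3 = 1$ and there is nothing to check. For $\Pen_{14}$, $M_{14} = e^2(1-4e)$; the irreducibility of $T^2 + T + e$ over $\F_q$ forces the discriminant $1-4e$ to be a non-square, hence nonzero, and it similarly rules out $e=0$ (otherwise $T^2+T$ would factor). The argument for $\Pen_{16}$ is analogous, using the irreducibility of both $T^2+T+d$ and $T^2+T+e$ to conclude $1-4d$ and $1-4e$ are nonzero. For $\Pen_{19}$, $M_{19}=\nu^2(\rho^2-4\nu\sigma^2)$, and here the standing assumption that $\rho^2-4\nu\sigma^2$ is a non-square, together with $\nu \neq 0$ (again forced by the corresponding irreducibility condition), gives the claim. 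The one case requiring slightly more comment is $\Pen_{18}$: here $M_{18} = b^2c^2 - 4c^3 - 4b^3 - 27 + 18bc$ is (up to sign) the classical discriminant of the cubic $T^3+bT^2+cT+1$, and the pencil assumption is precisely that this cubic is irreducible over $\F_q$, hence in particular separable, so its discriminant is nonzero.

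With both directions established, the second sentence of the lemma is essentially a general fact: a quadratic $\alpha s^2 + \beta s + \gamma \in \F_q[s]$ with $\alpha \neq 0$ and $\mathrm{char}\,\F_q \neq 2$ is a square of a linear polynomial if and only if its discriminant $\beta^2 - 4\alpha\gamma$ vanishes, since in that case $\varphi_j(s) = \alpha(s + \beta/(2\alpha))^2$. I would need to briefly confirm that the leading coefficient of each $\varphi_j$ is nonzero (immediate from Table \ref{tab:h2_disc_varphi_summary} once one recalls, for $\Pen_{18}$, that $b^2 - 3c$ cannot vanish in char $3$ consistent with irreducibility; for the other pencils the leading coefficient is either $1$, $e^2$, $\nu^2$, or $1$, all already known to be nonzero).

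The main obstacle, and the only step that is not a one-line computation, is verifying $M_{18} \neq 0$; this is the place where one must recognise the expression as the cubic discriminant and invoke separability rather than just a square/non-square assumption. Everything else reduces to reading off the pencil conditions from Table \ref{tab:pencil_class_auto}.
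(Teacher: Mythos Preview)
Your approach is essentially identical to the paper's: factor each discriminant as $-3\cdot M_j$ and verify $M_j\neq 0$ case-by-case from the pencil assumptions (irreducibility of the relevant quadratics for $\Pen_{14},\Pen_{16}$, non-square hypotheses for $\Pen_{19}$, and nonvanishing of the cubic discriminant for $\Pen_{18}$). One small correction: for $\Pen_{19}$ the fact that $\nu\neq 0$ comes directly from the assumption that $\nu$ is a non-square, not from an irreducibility condition; and your extra paragraph on leading coefficients, while more thorough than the paper, is slightly delicate for $\Pen_{18}$ in characteristic $3$ (where $\varphi_{18}(s)=(bs+c)^2$ and $b$ need not be nonzero a priori), though this does not affect the discriminant statement itself.
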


\begin{proof}
    From the assumptions on Table \ref{tab:pencil_class_auto}, $T^2 + T + d$ and $T^2 + T + e$ are irreducible and we must have $1-4d, 1-4e$, and $e$ to be non-zero which are factors that appear on $D(\varphi_{14})$ and $D(\varphi_{16})$. By irreducibility of $T^3 + bT^2 + cT + 1$, a result from \cite{dic1906irredfinite} states that its discriminant, $b^2 c^2 - 4c^3 - 4b^3d - 27d^2 + 18 bc$, must be non-zero and this factor appears on $D(\varphi_{18})$. Finally, since $\nu$ and $\rho^2 - 4\nu\sigma^2$ are all non-square, then they are also non-zero, and these factors appear on $D(\varphi_{19})$.

    With this, we can see that all $D(\varphi_j)$ in Table \ref{tab:h2_disc_varphi_summary} have the form $-3$ multiplied by a non-zero factor. Hence, the whole quantity is 0 if and only if $-3$ is 0 in $\F_q$, which happens if and only if char $\F_q = 3$.
\end{proof}

A useful lemma adapted from the main argument in \cite{chi2017ptc} is given below as Lemma \ref{lem:varphi_square_count}.

\begin{lemma}
    \label{lem:varphi_square_count}
    Suppose $\varphi(s) = u_2 s^2 + u_1 s + u_0 \in \F_q[s]$ is a quadratic polynomial that is not a square of a linear polynomial in $\F_q[s]$ and $u_2$ is a non-zero square in $\F_q$.

    Let $Z^{*}_\varphi = \{s \in \F_q \, | \, \varphi(s) = y^2, \, y \in \F_q \setminus \{0\} \}$, and $Z_\varphi = \{s \in \F_q \, | \,  \varphi(s) = 0 \}$. Then, 

    \begin{equation*}
        \abs{Z^{*}_\varphi} = \frac{q-1-\abs{Z_\varphi}}{2}.
    \end{equation*}

\end{lemma}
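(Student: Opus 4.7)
The plan is to count, in two different ways, the number of solutions $(s,y) \in \F_q \times \F_q$ of the equation $y^2 = \varphi(s)$, and then equate the two counts.

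First, I would stratify over $s$. For each $s \in \F_q$, the equation $y^2 = \varphi(s)$ has exactly one solution $y$ if $\varphi(s) = 0$, exactly two solutions if $\varphi(s)$ is a non-zero square, and no solutions otherwise. Summing over $s$, the total number of solutions equals $|Z_\varphi| + 2|Z^*_\varphi|$.

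Next, I would count the solutions by a change of variables. Since $\mathrm{char}\,\F_q \neq 2$ and $u_2$ is a non-zero square, write $u_2 = a^2$ and complete the square: setting $t = s + \tfrac{u_1}{2u_2}$ gives
\begin{equation*}
    \varphi(s) = u_2 t^2 + c, \qquad c = u_0 - \frac{u_1^2}{4u_2} = -\frac{D(\varphi)}{4u_2},
\end{equation*}
so the equation becomes $y^2 - (at)^2 = c$, i.e.\ $(y-at)(y+at) = c$. The hypothesis that $\varphi$ is not a square of a linear polynomial forces $D(\varphi) \neq 0$ and hence $c \neq 0$. The map $(y,t) \mapsto (p,q) := (y-at,\, y+at)$ is a bijection on $\F_q^2$ (again using $\mathrm{char}\,\F_q \neq 2$), so the solution count equals the number of $(p,q) \in \F_q^2$ with $pq = c$, which is exactly $q-1$.

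Equating the two counts yields $|Z_\varphi| + 2|Z^*_\varphi| = q - 1$, from which the claimed formula follows. There is no real obstacle here; the only points requiring care are that $\mathrm{char}\,\F_q \neq 2$ is used both to complete the square and to invoke the bijection $(y,t)\leftrightarrow(p,q)$, and that the non-square hypothesis on $\varphi$ is precisely what guarantees $c \neq 0$ so that the hyperbola $pq = c$ is non-degenerate.
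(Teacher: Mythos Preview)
Your proof is correct and follows essentially the same approach as the paper: both complete the square (using that $u_2$ is a non-zero square and $\operatorname{char}\F_q\neq 2$), factor the resulting equation $y^2-(at)^2=c$ as a product of two linear forms equal to the non-zero constant $c$, and then count. The paper parametrizes by one factor $\omega=y-(s-\beta)$ and tallies how many $\omega$ hit each $s$, whereas you phrase it as a two-way count of solution pairs via the linear bijection $(y,t)\leftrightarrow(y-at,\,y+at)$; these are equivalent arguments, and your double-counting formulation is arguably a bit more streamlined. One cosmetic point: you use $q$ for the second factor $y+at$, which collides with the field order---better to pick another letter.
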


\begin{proof}
    Recall that the non-zero square elements in $\F_q$ form a group under multiplication, and the product of a square and a non-square element of $\F_q$ will yield a non-square element. Since $u_2$ is a non-zero square, then $\varphi(s)$ is a non-zero square if and only if $\frac{\varphi(s)}{u_2}$ is a non-zero square. Hence, $\abs{Z^{*}_\varphi} = \abs{Z^{*}_{\frac{\varphi}{u_2}}}$. Moreover, the zeros of $\varphi(s)$ do not change when we divide it by a non-zero element of $\F_q$. So, we also have $\abs{Z_\varphi} = \abs{Z_{\frac{\varphi}{u_2}}}$. Thus, without loss of generality, we can assume that $u_2 = 1$; otherwise, we just consider $\frac{\varphi(s)}{u_2}$. 
    
    By completing the square, we obtain $\varphi(s) = (s-\beta)^2 +\gamma$ where $\beta, \gamma \in \F_q$ but $\gamma \neq 0$ by the assumption that $\varphi(s)$ is not a square of a linear polynomial. Now, we are interested in $s$ such that
    \begin{equation}
        \label{eqn:varphi_compsquare}
        \varphi(s) = (s-\beta)^2 +\gamma = y^2
    \end{equation}
    where $y \in \F_q$. Isolating $\gamma$ on one side of the equation gives us
    \begin{equation*}
        y^2 - (s-\beta)^2 = \underbrace{(y-s+\beta)}_{\omega}\underbrace{(y+s-\beta)}_{\frac{\gamma}{\omega}} = \gamma.
    \end{equation*}
    Since $\gamma$ is non-zero, the first factor $\omega = (y-s+\beta)$ is a non-zero element of $\F_q$ and the second factor $(y+s-\beta)$ can be expressed as $\dfrac{\gamma}{\omega}$. Thus, we get $y = \omega + s - \beta$ and substituting this back to Equation \ref{eqn:varphi_compsquare}, we obtain $(s-\beta)^2 +\gamma = (\omega + s - \beta)^2$ where we can solve for $s$ in terms of $\omega$ as
    \begin{equation}
        \label{eqn:varphi_square_param}
        s = \beta + \frac{\gamma}{2\omega} - \frac{\omega}{2}.
    \end{equation}
    Here, each $\omega \in \F_q \setminus \{0\}$ corresponds to a value of $s$ that will make $\varphi(s)$ a square in $\F_q$. In this case, for a fixed $s$ where $\varphi(s)$ is square, Equation \ref{eqn:varphi_square_param} can be treated as quadratic polynomial in $\omega$ with the form
    \begin{equation*}
        \omega^2 + (2s-2\beta)\omega -\gamma = 0
    \end{equation*}
    and discriminant $4\varphi(s)$. Hence, each $s$ making $\varphi(s)$ a non-zero square will have two corresponding $\omega$ but only one $\omega$ corresponding to each $s$ for which $\varphi(s) = 0$. With this, we have $q-1$ possible values for $\omega$ corresponding to the elements of $\F_q \setminus \{0\}$ and we remove one $\omega$ for each $s$ corresponding to a root of $\varphi(s)$ which leaves us the remaining $q-1-\abs{Z_\varphi}$ values of $\omega$ corresponding to $s$ which makes $\varphi(s)$ a non-zero square. Since we only get one $s$ per two values of $\omega$ in this case, we obtain the final count to be
    \begin{equation*}
        \abs{Z^{*}_\varphi} = \frac{q-1-\abs{Z_\varphi}}{2}.
    \end{equation*}    
    
\end{proof}

Now we are ready to count valid pairs $(r,s)$ that correspond to $3$-Poncelet pairs. For this purpose, we denote the set of valid pairs that correspond to $3$-Poncelet pairs as $\Gamma_j(H_2) = \{(r,s)\in \Phi_j \, | \, H_{2,j}(r,s) = 0 \}$, which we can decompose into three disjoint sets

$\Gamma_j(H_2(\cdot,\cdot)) = \{(r,s)\in \Gamma_j(H_2) | \, r,s \in \F_q \}$,

$\Gamma_j(H_2(\cdot,\infty)) = \{(r,s)\in \Gamma_j(H_2) | \, r \in \F_q , \,  s = \infty \}$, and

$\Gamma_j(H_2(\infty,\cdot)) = \{(r,s)\in \Gamma_j(H_2) | \, r = \infty, \, s \in \F_q \}$.

\begin{proposition}
    \label{prop:n3_char_n3_case_3_14_16_19}
    For char $\F_q \neq 3$ and $j \in \{3,14,16,19\}$, $\abs{\Gamma_j(H_2)} =  q-1-2\abs{Z_{\kappa_j}}$ with $\kappa_j$ as shown in Table \ref{tab:h2_root_kappa_summary}. In particular,
    \begin{equation*}
        \abs{\Gamma_j(H_2)} = q-5, \, j \in \{3,16\}, \text{ and}
    \end{equation*}
    \begin{equation*}
        \abs{\Gamma_j(H_2)} = q-1, \, j \in \{14,19\}.
    \end{equation*}
\end{proposition}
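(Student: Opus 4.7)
The plan is to first reduce the count to pairs with both coordinates in $\F_q$. For $j \in \{3,14,16,19\}$, Table \ref{tab:h2_summary} gives $H_{2,j}(\infty,s) = u \neq 0$, so $\Gamma_j(H_2(\infty,\cdot)) = \emptyset$. Since $\infty$ is among the singular indices of each of these pencils (Table \ref{tab:pencil_sing_base}), the validity requirement $\det C_j(s) \neq 0$ rules out $s = \infty$, so $\Gamma_j(H_2(\cdot,\infty)) = \emptyset$ as well. No irregular pair is listed for these $j$ in the Case~1 or Case~2 definitions, so the valid pairs in $\F_q \times \F_q$ are exactly the $(r,s)$ with $C_j(s)$ smooth, equivalently (by Lemma \ref{lem:kappa_0_sing}) $\kappa_j(s) \neq 0$.

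For each such $s$, the polynomial $H_{2,j}(r,s)$ is quadratic in $r$ with nonzero constant leading coefficient (read off from Table \ref{tab:h2_summary}), so the number of $r \in \F_q$ with $H_{2,j}(r,s) = 0$ is $2$, $1$, or $0$ according as the discriminant $D_j(s) = 16 \kappa_j(s)^2 \varphi_j(s)$ is a nonzero square, zero, or a nonsquare. Because $\kappa_j(s) \neq 0$ and char $\F_q \neq 2$, the factor $16\kappa_j(s)^2$ is a nonzero square, so this trichotomy depends only on $\varphi_j(s)$. Summing over valid $s$ gives
\begin{equation*}
    |\Gamma_j(H_2)| = 2 \bigl| Z^{*}_{\varphi_j} \setminus Z_{\kappa_j} \bigr| + \bigl| Z_{\varphi_j} \setminus Z_{\kappa_j} \bigr|.
\end{equation*}

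Next, Lemma \ref{lem:varphi_disc0_char3} together with the hypothesis char $\F_q \neq 3$ shows $\varphi_j$ is not the square of a linear polynomial, and Table \ref{tab:h2_disc_varphi_summary} shows its leading coefficient ($1$, $e^2$, $1$, or $\nu^2$) is always a nonzero square, so Lemma \ref{lem:varphi_square_count} gives $|Z^{*}_{\varphi_j}| = (q - 1 - |Z_{\varphi_j}|)/2$, from which $2|Z^{*}_{\varphi_j}| + |Z_{\varphi_j}| = q - 1$. The remaining task is therefore to show that the overlap correction $2|Z^{*}_{\varphi_j} \cap Z_{\kappa_j}| + |Z_{\varphi_j} \cap Z_{\kappa_j}|$ equals $2|Z_{\kappa_j}|$.

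This overlap check is the main and essentially only computation. For $j \in \{14,19\}$, Table \ref{tab:h2_root_kappa_summary} gives $Z_{\kappa_j} = \emptyset$, so the correction vanishes and $|\Gamma_j(H_2)| = q - 1$. For $j \in \{3,16\}$ the approach is to evaluate $\varphi_j$ at each root of $\kappa_j$: direct substitution yields $\varphi_3(0) = \varphi_3(1) = 1$, and using the relation $\eta_\pm^2 - \eta_\pm = 4de - d - e$ coming from $\kappa_{16}(\eta_\pm) = 0$ one simplifies $\varphi_{16}(\eta_\pm) = (1-4d)(1-4e)$, a nonzero square as a product of the two non-squares guaranteed by the irreducibility of $T^2 + T + d$ and $T^2 + T + e$ in Table \ref{tab:pencil_class_auto}. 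Each root of $\kappa_j$ therefore lies in $Z^{*}_{\varphi_j}$, contributing $2$ to the correction, which matches $2|Z_{\kappa_j}| = 4$ and yields $|\Gamma_j(H_2)| = q - 5$. The one delicate step is the identity $\varphi_{16}(\eta_\pm) = (1-4d)(1-4e)$, but it is a routine substitution once $\kappa_{16}(\eta_\pm) = 0$ is used.
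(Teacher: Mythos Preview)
Your argument mirrors the paper's: same reduction to $r,s \in \F_q$, same use of the discriminant factorization $D_j = 16\kappa_j^2\varphi_j$ together with Lemma~\ref{lem:varphi_square_count}, and the same direct evaluations $\varphi_3(0)=\varphi_3(1)=1$ and $\varphi_{16}(\eta_\pm)=(1-4d)(1-4e)$ to show $Z_{\kappa_j}\subseteq Z^*_{\varphi_j}$.

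The one omission is the distinctness condition. Valid pairs in this paper are pairs of \emph{distinct} conics (compare the denominators in Corollary~\ref{cor:n3_prob}, or the explicit removal of $q$ diagonal pairs in the proof of Proposition~\ref{prop:n3_char_e3_case_3_14_16_19}), so your claim that ``the valid pairs in $\F_q \times \F_q$ are exactly the $(r,s)$ with $C_j(s)$ smooth'' is not quite right: you must also have $r\neq s$. This does not change your final count, because Lemma~\ref{lem:h2_ss_0} together with char~$\F_q\neq 3$ and $C_j(s)$ smooth gives $H_{2,j}(s,s)\neq 0$, so no diagonal pair satisfies the Cayley condition anyway; the paper's proof makes this step explicit, and yours should too.
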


\begin{proof}
    By Lemma \ref{lem:kappa_0_sing}, $\kappa_j(s) \neq 0$ for all valid pairs $(r,s)$. Moreover, we can see that all $s \in \F_q$ corresponding to a singular conic is an element of $Z^{*}_{\varphi_j}$ since
    \begin{equation*}
        \varphi_{3}(0)=\varphi_{3}(1) = 1, \, \varphi_{16}(\eta_+) = \varphi_{16}(\eta_-) = (1-4d)(1-4e)
    \end{equation*}
    which by the irreducibility condition in Table \ref{tab:pencil_class_auto}, implies that both $(1-4e)$ and $(1-4d)$ are non-squares and so their product is a non-zero square. Thus, we remove the elements of $Z_{\kappa_j}$ from $Z^{*}_{\varphi_j}$ since they do not correspond to a valid pair.
    
    Since the form of our discriminant $D_j(s)$ is a product of $\varphi_j(s)$ and $16\pr{\kappa_j(s)}^2$, where we have shown that the latter factor is always a non-zero square, we have that each element of $Z_{\varphi_j}$ will contribute one pair $(r,s)$ since it will make $D_j(s) = 0$. Meanwhile, elements of $Z^{*}_{\varphi_j}$ that are not in $Z_{\kappa_j}$ will contribute two pairs of $(r,s)$ since this will make $D_j(s)$ a non-zero square. Since $H_2(\infty,s) \neq 0$ for all $s \neq \infty$ and $\infty$ corresponds to a singular conic for the pencils in our current case, then $\infty$ does not contribute to any pairs. Thus,
    \begin{align*}
        \abs{\Gamma_j(H_2)} &= \abs{\Gamma_j(H_2(\cdot,\cdot))} + \underbrace{\abs{\Gamma_j(H_2(\cdot,\infty))}}_{=0} + \underbrace{\abs{\Gamma_j(H_2(\infty,\cdot))}}_{=0} \\
        &= 2\pr{\abs{Z^{*}_{\varphi_j}} - \abs{Z_{\kappa_j}}} + \abs{Z_{\varphi_j}}.
    \end{align*}
    
    By Lemma \ref{lem:varphi_disc0_char3}, all $\varphi_j(s)$ are not a square of a linear polynomial, and since each $\varphi_j(s)$ will have a coefficient of $s^2$ that is a non-zero square in $\F_q$, we can use Lemma \ref{lem:varphi_square_count}. All pairs $(r,s)$ in $\Gamma_j(H_2)$ satisfy $r \neq s$ since char $\F_q \neq 3$ and by Lemma \ref{lem:h2_ss_0}, $H_2(s,s)$ only happens when $s$ corresponds to a non-singular conic which we already removed by removing elements in $Z_{\kappa_j}$. Therefore, we obtain the final count as
    \begin{equation*}
        \abs{\Gamma_j(H_2)} = 2\pr{\underbrace{\frac{q-1-\abs{Z_{\varphi_j}}}{2}}_{\text{Lemma } \ref{lem:varphi_square_count}} - \abs{Z_{\kappa_j}}} + \abs{Z_{\varphi_j}}  = q-1-2\abs{Z_{\kappa_j}}.
    \end{equation*}
\end{proof}

\begin{proposition}
    \label{prop:n3_char_e3_case_3_14_16_19}
    For char $\F_q = 3$ and $j \in \{3,14,16,19\}$, $\abs{\Gamma_j(H_2)} = q - \abs{Z_{D_j}}$ with $D_j$ as defined in Equation \ref{eqn:discriminant_2}. In particular,
    \begin{equation*}
        \abs{\Gamma_j(H_2)} = q-3, \, j \in \{3,16\}, \text{ and}
    \end{equation*}
    \begin{equation*}
        \abs{\Gamma_j(H_2)} = q-1, \, j \in \{14,19\}.
    \end{equation*}
\end{proposition}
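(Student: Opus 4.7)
The plan is to mirror the counting in Proposition \ref{prop:n3_char_n3_case_3_14_16_19}, with one structural change driven by Lemma \ref{lem:varphi_disc0_char3}: in characteristic $3$ each $\varphi_j(s)$ is the square of a linear polynomial, say $\varphi_j(s) = L_j(s)^2$ with $L_j \in \F_q[s]$ of degree $1$. Consequently $D_j(s) = [4\kappa_j(s) L_j(s)]^2$ is a perfect square in $\F_q[s]$, so for every $s \in \F_q$, $D_j(s)$ is either zero or a non-zero square in $\F_q$---the non-square case from the previous proposition simply cannot occur.

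Next I would decompose $\Gamma_j(H_2)$ exactly as before. The boundary pieces vanish for the same reasons: $H_{2,j}(\infty,s) = u \neq 0$ kills $\Gamma_j(H_2(\infty,\cdot))$, and $\eta = \infty$ is a singular index for each $j \in \{3,14,16,19\}$ by Table \ref{tab:pencil_sing_base}, so $\Gamma_j(H_2(\cdot,\infty))$ is empty. It therefore suffices to count pairs $(r,s) \in \F_q \times \F_q$ with $\kappa_j(s) \neq 0$ (equivalent by Lemma \ref{lem:kappa_0_sing} to the valid-$s$ condition), $H_{2,j}(r,s) = 0$, and $r \neq s$.

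The key use of characteristic $3$ is Lemma \ref{lem:h2_ss_0}: $r = s$ is always a root of $H_{2,j}(\cdot,s)$. For each valid $s$, if $\varphi_j(s) \neq 0$ then $D_j(s)$ is a non-zero square and the quadratic has two distinct $\F_q$-roots, exactly one of which is $r = s$, giving one extra pair; if $\varphi_j(s) = 0$, the double root must coincide with $r = s$ and contributes nothing. Summing gives
\[
|\Gamma_j(H_2)| \;=\; \bigl|\{s \in \F_q : \kappa_j(s) \neq 0,\ \varphi_j(s) \neq 0\}\bigr| \;=\; q - |Z_{\kappa_j} \cup Z_{\varphi_j}| \;=\; q - |Z_{D_j}|,
\]
a formula that holds regardless of whether $Z_{\kappa_j}$ and $Z_{\varphi_j}$ overlap.

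The explicit values follow by computing $|Z_{D_j}|$ in characteristic $3$. For $j \in \{14,19\}$, $\kappa_j$ has no roots in $\F_q$ (by Lemma \ref{lem:kappa_0_sing}) and $\varphi_j$ contributes one, giving $|Z_{D_j}| = 1$. For $j = 3$, a direct simplification yields $\varphi_3(s) = (s+1)^2$ with root $s = 2$, disjoint from $Z_{\kappa_3} = \{0,1\}$, so $|Z_{D_3}| = 3$. The main technical step I expect is the analogous check for $j = 16$: after simplifying $\varphi_{16}$ to $s^2 - s + 1$ in characteristic $3$ (the $12de$, $3d$, $3e$ terms vanish), its unique root is again $s = 2$, and one needs $\kappa_{16}(2) \neq 0$. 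A short calculation gives $\kappa_{16}(2) = 2 + d + e - de = -(1-d)(1-e)$ in characteristic $3$, which is non-zero since the irreducibility of $T^2 + T + d$ and $T^2 + T + e$ from Table \ref{tab:pencil_class_auto} forces $d, e \neq 1$ in characteristic $3$, so $|Z_{D_{16}}| = 3$ as well.
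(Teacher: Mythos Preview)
Your proposal is correct and follows essentially the same approach as the paper: both use Lemma~\ref{lem:varphi_disc0_char3} to conclude that $D_j(s)$ is always a square in characteristic~$3$, then count solutions and remove the diagonal via Lemma~\ref{lem:h2_ss_0}. Your bookkeeping is organized slightly differently---you restrict to valid~$s$ first and compute the per-$s$ contribution directly, whereas the paper sums over all $s\in\F_q$ and subtracts~$q$ at the end---and you supply the disjointness check $\kappa_{16}(2)\neq 0$ that the paper leaves implicit when it lists the three roots $\eta_+,\eta_-,-1$ of $D_{16}$.
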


\begin{proof}
    By Lemma \ref{lem:varphi_disc0_char3}, all $D_j(s)$ will always be a square. In effect, $s \in \F_q$ will contribute two pairs if it is not a root of $D_j(s)$, and each root contributes one pair. $D_3(s)$ has three roots in $\F_q$ which are $0,1,-1$. $D_{16}(s)$ also has three roots in $\F_q$ given by $ \eta_{+},\eta_{-}, -1 $ with $\eta_{+}, \eta_{-}$ defined as in Table \ref{tab:pencil_sing_base}. $D_{14}(s)$ and $D_{19}(s)$ both only have one root in $\F_q$ given by $-e^{-1}$ and $\nu^{-1}\rho$, respectively.

    Since char $\F_q = 3$, Lemma \ref{lem:h2_ss_0}, states that all $s\in \F_q$ satisfies $H_{2,j}(s,s) = 0$ and hence, we should remove $q$ pairs in counting since these are not valid pairs. Lastly, $\infty$ does not contribute into any pairs by the same arguments in the proof of Proposition \ref{prop:n3_char_n3_case_3_14_16_19}. Thus,
    \begin{align*}
        \abs{\Gamma_j(H_2)} &= \abs{\Gamma_j(H_2(\cdot,\cdot))} + \underbrace{\abs{\Gamma_j(H_2(\cdot,\infty))}}_{=0} + \underbrace{\abs{\Gamma_j(H_2(\infty,\cdot))}}_{=0} \\
        &= 2\pr{q - \abs{Z_{D_j}}} + \abs{Z_{D_j}} -\underbrace{q}_{\text{Lemma } \ref{lem:h2_ss_0}} = q - \abs{Z_{D_j}}.
    \end{align*}    
\end{proof}

Before dealing with $\Pen_{18}$, we prove this lemma to simplify our proof.

\begin{lemma}
    \label{lem:choice_bc}
    Suppose char $\F_q \neq 2$. Then we can always find $b \in \F_q \setminus \{0\}$ such that $T^3 + bT^2 + 1$ is an irreducible polynomial in $\F_q[T]$. 
\end{lemma}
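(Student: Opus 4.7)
The plan is a short pigeonhole argument that rests on the elementary fact that a monic cubic in $\F_q[T]$ is irreducible if and only if it has no root in $\F_q$. So the task reduces to exhibiting a nonzero $b$ for which $f_b(T) := T^3 + bT^2 + 1$ has no root in $\F_q$.

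First I would parametrize the "bad" values of $b$, that is, the $b$ for which $f_b$ admits a root $\alpha \in \F_q$. Since $f_b(0) = 1 \neq 0$, any such root lies in $\F_q^*$. Solving $\alpha^3 + b\alpha^2 + 1 = 0$ for $b$ gives $b = -\alpha - \alpha^{-2}$, so the bad set equals the image $S = \phi(\F_q^*)$ of the map $\phi: \F_q^* \to \F_q$, $\phi(\alpha) = -\alpha - \alpha^{-2}$.

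Next I would apply pigeonhole: $|S| \leq |\F_q^*| = q-1 < q$, so $\F_q \setminus S$ is nonempty. To guarantee the witness produced is nonzero, I would check directly that $0 \in S$: evaluating $\phi(-1) = -(-1) - (-1)^{-2} = 1 - 1 = 0$ works. Therefore any $b \in \F_q \setminus S$ is automatically nonzero and makes $f_b$ root-free in $\F_q$, hence irreducible.

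There is essentially no obstacle — the argument is just a counting bound plus the small verification that $\alpha = -1$ hits the value $0$ under $\phi$, which is exactly what ensures that the element guaranteed by pigeonhole is nonzero. The hypothesis $\operatorname{char}\F_q \neq 2$ is not strictly needed for the argument, but is consistent with the standing assumption of the paper.
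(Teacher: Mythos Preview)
Your proposal is correct and is essentially the same argument as the paper's own proof: both parametrize the ``bad'' values of $b$ as $-\alpha - \alpha^{-2}$ for $\alpha \in \F_q^*$, apply pigeonhole to get some $b \in \F_q \setminus S$, and then observe that $0 \in S$ (you via $\phi(-1)=0$, the paper via the equivalent remark that $T^3+1$ is reducible) to conclude the witness is nonzero.
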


\begin{proof}
    Note that $T=0$ is not a root of $T^3 + bT^2 + 1$, so the only possible roots are the non-zero elements of $\F_q$, which is generated by the primitive element $\alpha$ and each of them takes the form $\alpha^k$ for $k \in \{1,...,q-1\}$. Now, a cubic polynomial in $\F_q[T]$ is irreducible if and only if it does not have any root in $\F_q$. But for $T = \alpha^k$, for some $k \in \{1,...,q-1\}$,
    \begin{equation*}
        \alpha^{3k} + b \alpha^{2k} + 1 = 0 \iff b = -\alpha^k - \alpha^{-2k}.
    \end{equation*}
    Hence, if we choose $b \in \F_q \setminus \{-\alpha^k - \alpha^{-2k}\,|\,k \in \{1,...,q-1\}\}$, then this $b$ satisfies the condition that $T^3 + bT^2 + 1$ does not have a root in $\F_q$, making it irreducible in $\F_q[T]$. This choice is possible since $\{-\alpha^k - \alpha^{-2k}\,|\,k \in \{1,...,q-1\}\}$ has at most $q-1$ elements while $\F_q$ has $q$ elements. Lastly, $b$ cannot be zero since $T^3 + 1$ is not irreducible.
\end{proof}

\begin{proposition}
    \label{prop:n3_char_n3_case_18}
    For char $\F_q \neq 3$, $\abs{\Gamma_{18}(H_2)} = q + 1$.
\end{proposition}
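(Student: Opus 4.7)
The plan is to count projective zeros of $H_{2,18}$ in $\Prj^1(\F_q)\times\Prj^1(\F_q)$ directly, exploiting the fact that $\Pen_{18}$ contains no singular conics. The irreducibility of $\kappa_{18}(T) = T^3 + bT^2 + cT + 1$, together with Lemma \ref{lem:kappa_0_sing} and Table \ref{tab:pencil_sing_base}, guarantees that every conic $C_{18}(s)$ with $s \in \Prj^1(\F_q)$ is smooth. Consequently $\Pen_{18}$ admits no irregular pairs and every $(r,s) \in \Prj^1(\F_q) \times \Prj^1(\F_q)$ with $r \neq s$ is valid. Lemma \ref{lem:h2_ss_0}, combined with char $\F_q \neq 3$ and the smoothness of all conics in $\Pen_{18}$, moreover yields $H_{2,18}(s,s) \neq 0$ for every $s \in \Prj^1(\F_q)$, so the diagonal contributes nothing. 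Thus $\abs{\Gamma_{18}(H_2)}$ equals the number of pairs $(r,s) \in \Prj^1(\F_q) \times \Prj^1(\F_q)$ with $H_{2,18}(r,s) = 0$.

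For each $s \in \F_q$, I would treat $H_{2,18}(\cdot, s)$ as a binary quadratic form in the projective variable $r$. This form is never identically zero: a common zero $s_0$ of its three coefficients $L(s),M(s),N(s)$ would give $H_{2,18}(s_0, s_0) = 0$, contradicting the diagonal non-vanishing above. Its discriminant is $D_{18}(s) = 16\kappa_{18}(s)^2 \varphi_{18}(s)$, and since $16\kappa_{18}(s)^2$ is a non-zero square the number of projective roots in $r$ is $2$, $1$, or $0$ according as $\varphi_{18}(s)$ is a non-zero square, zero, or a non-square. Summing over $s \in \F_q$ yields $2\abs{Z^*_{\varphi_{18}}} + \abs{Z_{\varphi_{18}}}$. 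At $s = \infty$, the form $3r^2 + 2br + (-b^2 + 4c)$ has discriminant $16(b^2 - 3c)$ and contributes $2$, $1$, or $0$ projective roots according as $b^2 - 3c$ is a non-zero square, zero, or a non-square.

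To evaluate $2\abs{Z^*_{\varphi_{18}}} + \abs{Z_{\varphi_{18}}}$ I would split on the value of $b^2 - 3c$. If $b^2 - 3c$ is a non-zero square, Lemma \ref{lem:varphi_square_count} applies directly to $\varphi_{18}$ (which by Lemma \ref{lem:varphi_disc0_char3} is not the square of a linear polynomial) and gives $2\abs{Z^*_{\varphi_{18}}} + \abs{Z_{\varphi_{18}}} = q - 1$, to which we add the contribution $2$ from $s = \infty$. If $b^2 - 3c$ is a non-square, I would apply Lemma \ref{lem:varphi_square_count} to the rescaled polynomial $\varphi_{18}/(b^2 - 3c)$ whose leading coefficient is the square $1$, and rearrange to obtain $2\abs{Z^*_{\varphi_{18}}} + \abs{Z_{\varphi_{18}}} = q + 1$ with no contribution from $s = \infty$. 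If $b^2 - 3c = 0$, the irreducibility of $T^3 + bT^2 + cT + 1$ forces $bc - 9 \neq 0$: otherwise $b^3 = 27$ and the cubic equals $(T + \omega)^3$ for some cube root of unity $\omega$, a contradiction. Hence $\varphi_{18}$ reduces to a linear polynomial with non-zero slope, which bijects $\F_q$ onto itself; an elementary count gives $q$ from $s \in \F_q$ plus $1$ from $s = \infty$. In every case the total is $q + 1$.

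The main obstacle is this three-way case split: unlike $\varphi_3, \varphi_{14}, \varphi_{16}, \varphi_{19}$, whose leading coefficients are automatically non-zero squares in $\F_q$, the leading coefficient $b^2 - 3c$ of $\varphi_{18}$ can in principle take any value, including zero. The subcase $b^2 - 3c = 0$ is the most delicate because it requires the factorization argument excluding $(T + \omega)^3$, which ensures $\varphi_{18}$ does not collapse to a constant and reintroduce a $q$-fold contribution that would break the count.
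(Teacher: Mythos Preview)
Your argument is correct and takes a genuinely different route from the paper's. The paper invokes Lemma~\ref{lem:choice_bc} to normalise the generators to $c=0$ and $b\neq 0$, which forces the leading coefficient of $\varphi_{18}$ to be the nonzero square $b^2$; this places the whole computation in what you call Case~1, and the paper then disposes of the subcase $H_{2,18}(\infty,s)=0$ (where $H_{2,18}(\cdot,s)$ drops to a linear polynomial in $r$) via two explicit resultant computations showing that $H_{2,18}(\infty,s)$ shares no root with either $\varphi_{18}$ or with the middle coefficient $f_1$. You instead keep $b,c$ arbitrary and treat $H_{2,18}(\cdot,s)$ as a binary quadratic form on $\Prj^1$, so the degenerate-leading-coefficient subcase is absorbed into the single discriminant criterion and no resultants are needed; the price is the three-way split on $b^2-3c$, including the delicate $b^2-3c=0$ branch where you must rule out $bc=9$ via the $(T+\omega)^3$ factorisation. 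Each approach buys something: the paper's normalisation eliminates the case split at the cost of Lemma~\ref{lem:choice_bc} and the resultant machinery, while your projective viewpoint is conceptually cleaner (and makes transparent why the answer $q+1$ is independent of the choice of $b,c$) but carries more casework. Both are complete.
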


\begin{proof}
    By Lemma \ref{lem:choice_bc}, we can choose $c=0$ and $b \neq 0$ such that $T^3 + bT^2 + 1$ is irreducible. Referring to Table \ref{tab:h2_summary}, observe that $H_2(r,s) = H_{2,18}(\infty,s) r^2 + f_1(s) r + f_0(s)$, where $f_1(s),f_0(s) \in \F_q[s]$. 
    
    First, let $s \in \F_q$ such that $H_{2,18}(\infty,s) \neq 0$. In this case, $H_{2,18}(r,s)$ is a quadratic polynomial in $r$ so we can interpret the discriminant $D_{18}(s)$ as in Equation \ref{eqn:discriminant_2}. By Lemma \ref{lem:kappa_0_sing}, $\kappa_{18}(s)$ is always non-zero and thus, $D_{18}(s)$ is a non-zero square or zero if and only if $\varphi_{18}(s)$ is also a non-zero square or zero. Moreover, our choice of $b$ and $c$ allow us to use Lemma \ref{lem:varphi_square_count} since the coefficient of $s^2$ in $\varphi_{18}(s)$ will just be $b^2$.

    For $s \in \F_q$ such that $H_{2,18}(\infty,s) = 0$, \cite{wan2025pnc} pointed out that we have $D_{18}(s) = f_1(s)^2$ is always a square and this implies that for such $s$, $\varphi_{18}(s)$ is a square. We will show that $s$ is not a root of $\varphi_{18}(s)$ and hence, must be in $Z^{*}_{\varphi_{18}}$. We do this by computing the resultant and recalling that $\resultant(q_1(s),q_2(s),s) = 0$ if and only if $q_1(s)$ and $q_2(s)$ share the same root as a polynomial in $s$ \cite{ken2011algebracurve}. Now,
    \begin{equation*}
        \resultant(H_{2,18}(\infty,s),\varphi_{18}(s),s) = b^4(4b^3 + 27)^2
    \end{equation*}
    but $b\neq0$ by our choice and $4b^3+27 \neq 0$ by \cite{dic1906irredfinite} since it is the discriminant of the irreducible polynomial $T^3 + bT^2 + 1$. Hence, the two polynomials do not share any root, and we have the case where $s \in Z^{*}_{\varphi_{18}}$ but should not be counted since such $s$ will not make $H_{2,18}(r,s)$ quadratic in $r$.
    
    Now, we will show that for $s \in \F_q$ such that $H_{2,18}(\infty,s) = 0$, $H_2(r,s)$ will be a linear function of $r$. To do this, we use the resultant again and see that
    \begin{equation*}
        \resultant(H_{2,18}(\infty,s),f_1(s),s) = 16b^2(4b^3+27)^3
    \end{equation*}
    which is also non-zero by the same argument as the previous resultant. This implies that $H_2(\infty,s)$ and $f_1(s)$ cannot be simultaneously zero. Hence, for each element $(\infty,s) \in \Gamma_{18}(H_2(\infty,\cdot))$, we obtain an additional pair of solution $(r,s) \in \Gamma_{18}(\cdot,\cdot)$ corresponding to the solution of the linear equation which yields $r = \frac{f_0(s)}{f_1(s)}$.
    
    Finally, we have $H_{2,18}(r,\infty)$ which is a quadratic polynomial with discriminant that reduces to $(4b)^2$, a non-zero square by our choice of $b$ and $c$, giving us an additional 2 pairs. The pairs $(r,s)$ we obtain here will have $r \neq s$ by Lemma \ref{lem:h2_ss_0} since we have no singular conics for this pencil and char $\F_q \neq 3$. Taking all of this into consideration, we obtain the count
    \begin{align*}
       \abs{\Gamma_{18}(H_2)} &= \abs{\Gamma_{18}(H_2(\cdot,\cdot))} + \underbrace{\abs{\Gamma_{18}(H_2(\cdot,\infty))}}_{=2} + \abs{\Gamma_{18}(H_2(\infty,\cdot))}\\
        &= \underbrace{2\pr{\abs{Z^{*}_{\varphi_{18}}} - \abs{\Gamma_{18}(H_2(\infty,\cdot))}} + \abs{Z_{\varphi_{18}}}}_{H_{2,18}(r,s) \text{ quadratic in } r} + \underbrace{\abs{\Gamma_{18}(H_2(\infty,\cdot))}}_{H_{2,18}(r,s) \text{ linear in } r} + \, 2 \, + \abs{\Gamma_{18}(H_2(\infty,\cdot))}\\
        &= 2\pr{\underbrace{\frac{q-1-\abs{Z_{\varphi_{18}}}}{2}}_{\text{Lemma } \ref{lem:varphi_square_count}} - \abs{\Gamma_{18}(H_2(\infty,\cdot))}} + \abs{Z_{\varphi_{18}}} + 2\abs{\Gamma_{18}(H_2(\infty,\cdot))} + 2 = q+1.
    \end{align*}
\end{proof}

\begin{proposition}
    \label{prop:n3_char_e3_case_18}
    For char $\F_q = 3$, $\abs{\Gamma_{18}(H_2)} = q$.
\end{proposition}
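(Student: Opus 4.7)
The plan is to mirror the structure of Proposition \ref{prop:n3_char_n3_case_18}, retaining the choice $c=0$ and $b\neq 0$ (with $T^3 + bT^2 + 1$ irreducible, by Lemma \ref{lem:choice_bc}), and tracing how each relevant polynomial from Table \ref{tab:h2_summary} simplifies modulo $3$. Substituting $c=0$ and killing the multiples of $3$, I would obtain $H_{2,18}(\infty,s) = 4b(s^3+1)$, whose unique root in $\F_q$ is $s=-1$ by the bijectivity of the Frobenius $s\mapsto s^3$; the polynomial $\varphi_{18}(s) = b^2 s^2$, which is the square of $bs$ (consistent with Lemma \ref{lem:varphi_disc0_char3}); the polynomial $\kappa_{18}(s) = s^3 + bs^2 + 1$, still with no roots in $\F_q$ by irreducibility of $T^3 + bT^2 + 1$; and $H_{2,18}(r,\infty) = b(2r-b)$ with unique root $r = b/2$. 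Consequently $D_{18}(s) = 16\,\kappa_{18}(s)^2\, \varphi_{18}(s)$ is always a square in $\F_q$, and is zero precisely when $s = 0$.

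Next I would count pairs in $\Gamma_{18}(H_2(\cdot,\cdot))$ by splitting on $s$. For $s \in \F_q \setminus \{-1\}$, the polynomial $H_{2,18}(r,s)$ is genuinely quadratic in $r$ with square discriminant, yielding two values of $r$ when $s \neq 0$ and one double root when $s = 0$; this contributes $2(q-2) + 1 = 2q - 3$ raw pairs. For $s = -1$, the polynomial drops to linear in $r$ with nonzero leading coefficient $f_1(-1) = 2b^2$, giving one further pair. Thus $s \in \F_q$ contributes $2q - 2$ raw pairs in total. Applying Lemma \ref{lem:h2_ss_0} (since $\text{char } \F_q = 3$ and $\Pen_{18}$ has no singular conics, every diagonal $(s,s)$ satisfies $H_{2,18}(s,s) = 0$), exactly $q$ of these must be discarded as invalid, leaving $q - 2$ valid pairs. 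Adding the boundary pair $(\infty, -1)$ from $\Gamma_{18}(H_2(\infty,\cdot))$ and the pair $(b/2, \infty)$ from $\Gamma_{18}(H_2(\cdot,\infty))$, both automatically satisfying $r \neq s$, yields $\abs{\Gamma_{18}(H_2)} = (q-2) + 1 + 1 = q$.

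The main obstacle is bookkeeping the diagonal subtraction correctly: in characteristic $3$ the Lemma \ref{lem:varphi_square_count} approach used in Proposition \ref{prop:n3_char_n3_case_18} no longer applies directly, because $\varphi_{18}$ is a perfect square and because $H_{2,18}(s,s) = 0$ holds universally. One must verify that in each regime of $s$ (namely $s \in \F_q \setminus \{-1, 0\}$, $s = 0$, and $s = -1$) the diagonal $r = s$ does lie in the solution set found above, so that the blanket removal of $q$ pairs neither under- nor over-counts. Once this is confirmed, the rest of the argument reduces to the elementary evaluations described above and delivers the stated count of $q$.
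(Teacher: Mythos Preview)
Your proposal is correct and follows essentially the same argument as the paper: the same choice $c=0$ via Lemma \ref{lem:choice_bc}, the same discriminant analysis showing $D_{18}(s)$ is always a square in characteristic $3$ (vanishing only at $s=0$), the same split into quadratic and linear cases for $H_{2,18}(r,s)$, and the same diagonal removal via Lemma \ref{lem:h2_ss_0}. The only cosmetic difference is at $s=\infty$: you recognise $H_{2,18}(r,\infty)$ as linear with the single root $r=b/2$ and subtract $q$ diagonals, whereas the paper records it as contributing two roots (implicitly counting $r=\infty$) and subtracts $q+1$ diagonals including $(\infty,\infty)$; both accountings yield $q$.
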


\begin{proof}
    By Lemma \ref{lem:choice_bc}, we can choose $c=0$ and $b \neq 0$ such that $T^3 + bT^2 + 1$ is irreducible. Observe that $H_{2,18}(\infty,0) = 4b \neq 0$. Thus, $0 \notin \Gamma_{18}(H_2(\infty,\cdot))$. For $s \in \F_q \setminus \Gamma_{18}(H_2(\infty,\cdot))$, $H_{2,18}(r,s)$ is quadratic in $r$ and $D_{18}(s) = (4(s^3+bs^2+1)(bs))^2$ will always be a square and is zero exactly when $s = 0$, which we have shown to be not in $\Gamma_{18}(H_2(\infty,\cdot))$. Hence, each non-zero $s$ not in $\Gamma_{18}(H_2(\infty,\cdot))$ will contribute two pairs $(r,s)$ and we get one pair for $s = 0$.

    Now, $\resultant(H_{2,18}(\infty,s),f_1(s),s) = 64b^{11}$ in char $\F_q = 3$ which is non-zero. Thus, we get one pair $(r,s)$ where $r = \frac{f_0(s)}{f_1(s)}$ is the solution to the linear equation $H_{2,j}(r,s) = 0$, for each $s \in \Gamma_{18}(H_2(\infty,\cdot))$.
    
    Considering the quadratic polynomial $H_2(r,\infty)$, the discriminant is still $(4b)^2$, which is a non-zero square. This gives us an additional two pairs.
    
    Finally, by Lemma \ref{lem:h2_ss_0}, we need to remove the pairs $(s,s)$ where $s \in \Prj^1(\F_q)$ which removes a total of $q+1$ pairs. Taking all these into account, we obtain
    
    \begin{align*}
        \abs{\Gamma_j(H_2)} &= \abs{\Gamma_j(H_2(\cdot,\cdot))} + \underbrace{\abs{\Gamma_j(H_2(\cdot,\infty))}}_{=2} + \abs{\Gamma_j(H_2(\infty,\cdot))} \\
        &= \underbrace{2\pr{q-1 - \abs{\Gamma_j(H_2(\infty,\cdot))}} + 1}_{H_{2,18}(r,s) \text{ quadratic in } r} + \underbrace{\abs{\Gamma_j(H_2(\infty,\cdot))}}_{H_{2,18}(r,s) \text{ linear in } r} \\
        & \hspace{10pt} +\, 2 + \abs{\Gamma_j(H_2(\infty,\cdot))} - \underbrace{\pr{q+1}}_{\text{Lemma } \ref{lem:h2_ss_0}}  = q.
    \end{align*}
\end{proof}

\begin{proposition}
    \label{prop:n3_char_n3_case_4_6_15_17}
    For char $\F_q \neq 3$ and $j \in \{4,6,15,17\}$, $\abs{\Gamma_j(H_2)} = q -1$.
\end{proposition}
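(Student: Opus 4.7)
The plan is to bypass the discriminant machinery of Propositions \ref{prop:n3_char_n3_case_3_14_16_19}--\ref{prop:n3_char_e3_case_18}, because the forms of $H_{2,j}$ in Table \ref{tab:h2_summary} are elementary for these four pencils: $H_{2,j}$ is linear in $r$ for $j \in \{4,6,15\}$, and factors into two linear factors in $r$ for $j = 17$. A direct case split on the roots of $H_{2,j}(r,s)=0$ suffices, without discriminants, completing the square, or resultants.

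For $j \in \{4,6,15\}$, the equation $4r - s = 0$ has the unique solution $r = s/4 \in \F_q$ for each $s \in \F_q$ (since char $\F_q \neq 2$). Reading the singular indices from Table \ref{tab:pencil_sing_base}, they are $\{0,\infty\}$ for each of these pencils, so $\det C_j(s) \neq 0$ forces $s \in \F_q \setminus \{0\}$. For any such $s$, the pair $(s/4, s)$ is automatically non-irregular (every irregular pair listed for $\Pen_j$ has $r \in \{0,\infty\}$, whereas $s/4 \in \F_q \setminus \{0\}$) and satisfies $r \neq s$ (because char $\F_q \neq 3$ forces $3s \neq 0$). This contributes $q-1$ pairs to $\Gamma_j(H_2(\cdot,\cdot))$. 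The remaining branch $r = \infty$ yields $H_{2,j}(\infty, s) = 0$ identically, but every such pair $(\infty, s)$ is either irregular for $s \in \F_q \setminus \{0\}$ (Case 2, together with Case 1 when $j=6$) or has singular $\B$ when $s \in \{0,\infty\}$, so contributes nothing; hence the total is $q-1$.

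For $j = 17$, the factorization $H_{2,17}(r,s) = r(r-4s)$ splits the problem into two linear branches. The branch $r = 0$ never produces a valid pair: it is irregular (Case 2 for $\Pen_{17}$) for $s \in \F_q \setminus \{0\}$, and has singular $\B$ for $s \in \{0,\infty\}$. The branch $r = 4s$ is handled exactly as in the previous paragraph: for $s \in \F_q \setminus \{0\}$ the pair $(4s, s)$ is non-singular, non-irregular, and satisfies $r \neq s$, producing $q-1$ valid pairs. Finally, $H_{2,17}(\infty, s) = u \neq 0$ rules out $r = \infty$, while $s = \infty$ is singular and hence excluded.

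The only real obstacle is accurate bookkeeping: one has to confirm for each pencil that the proposed root $r = s/4$ (or $r = 4s$) lands outside both the set of irregular $r$-values, namely $\{0,\infty\}$, and the set of singular $s$-values inherited from the validity requirement on $\B$. Thanks to char $\F_q \neq 3$, these two constraints turn out to be consistent for every $s \in \F_q \setminus \{0\}$, and the final count is $q-1$ in all four cases.
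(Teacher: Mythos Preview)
Your proof is correct and follows essentially the same approach as the paper: both arguments solve the linear equation $H_{2,j}(r,s)=0$ directly for $r$ in terms of $s$ (obtaining $r=s/4$ for $j\in\{4,6,15\}$ and $r=4s$ for $j=17$) and note that the $(\infty,s)$ or $(0,s)$ branches are excluded as irregular. Your version is more explicit in verifying non-irregularity and $r\neq s$ for each pencil, but the underlying idea is identical to the paper's terse proof.
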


\begin{proof}
    All of these share the same form of $H_{2,j}(r,s)$, which is degree 1 in both $r$ and $s$ with the roles of $r$ and $s$ reversed for $j=17$. Choosing $s \in \F_q \setminus \{0\}$, $H_{2,j}(r,s) = 0$ if and only if $r = \frac{s}{4}$ for $j \neq 17$ and $r = 4s$ for $j = 17$. Thus, there is only one value of $r$ for each $s$, giving us a total of $q-1$ pairs. We do not count the pairs of the form $(\infty,s)$ for $j \neq 17$ and $(0,s)$ for $j=17$ since they correspond to the irregular pairs.
\end{proof}

\begin{proposition}
    \label{prop:n3_char_e3_case_4_6_15_17}
    For char $\F_q = 3$ and $j \in \{4,6,15,17\}$, $\abs{\Gamma_j(H_2)} = 0$.
\end{proposition}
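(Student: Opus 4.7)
The plan exploits the fact that in characteristic $3$ the coefficient $4$ collapses to $1$, which dramatically simplifies the polynomials $H_{2,j}$ in Table \ref{tab:h2_summary}. I would split into the subcases $j\in\{4,6,15\}$, which share $H_{2,j}(r,s)=4r-s$, and $j=17$, for which $H_{2,17}(r,s)=r(r-4s)$.

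For $j\in\{4,6,15\}$: in characteristic $3$ the equation $4r-s=0$ reduces to $r=s$, so every solution with $r,s\in\F_q$ has this form and is discarded by the distinctness of the conic pair, exactly as prescribed by Lemma \ref{lem:h2_ss_0}. The remaining candidates have $r=\infty$, where $H_{2,j}(\infty,s)=0$ identically. By checking Table \ref{tab:pencil_sing_base} together with the irregular-pair list, each such candidate $(\infty,s)$ is eliminated: for $s\in\F_q\setminus\{0\}$ the pair is irregular; $s=0$ is a singular index of $\Pen_j$, so $\det C_j(0)=0$ and $(\infty,0)\notin\Phi_j$; and $s=\infty$ is likewise singular, so $(\infty,\infty)\notin\Phi_j$ (and is in any case not distinct).

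For $j=17$: the factorisation $H_{2,17}(r,s)=r(r-4s)=r(r-s)$ in characteristic $3$ reduces the equation to $r=0$ or $r=s$. The branch $r=s$ is removed by distinctness. The branch $r=0$ gives candidates $(0,s)$, which by Table \ref{tab:pencil_sing_base} are irregular for $s\in\F_q\setminus\{0\}$ and lie outside $\Phi_{17}$ for $s\in\{0,\infty\}$ because $s$ is then a singular index. Finally $H_{2,17}(\infty,s)$ is the non-zero constant $u$, so no pair with $r=\infty$ satisfies the equation.

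Combining the cases gives $\Gamma_j(H_2)=\emptyset$ for each $j\in\{4,6,15,17\}$, hence $\abs{\Gamma_j(H_2)}=0$. The argument presents no real obstacle; the main care required is the boundary bookkeeping at $r,s\in\{0,\infty\}$, confirming that each such candidate is either irregular or fails $\det C_j(s)\neq 0$ via the singular indices recorded in Table \ref{tab:pencil_sing_base}.
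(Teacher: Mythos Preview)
Your proof is correct and follows essentially the same idea as the paper: in characteristic $3$ the polynomials $H_{2,j}(r,s)$ collapse to $r-s$ (or $r(r-s)$ for $j=17$), so the only solutions have $r=s$ and are discarded by distinctness. The paper's proof is terser and does not spell out the boundary analysis at $r\in\{0,\infty\}$, implicitly absorbing it into the irregular-pair exclusions already used in Proposition~\ref{prop:n3_char_n3_case_4_6_15_17}; your explicit case-check of those boundary candidates against Table~\ref{tab:pencil_sing_base} and the irregular-pair list is a welcome addition that makes the argument self-contained.
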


\begin{proof}
    If char $\F_q = 3$, we get the condition that $H_{3,j}=0$ if and only if $r = s$, which cannot happen since valid pairs need to be distinct. We can also use Proposition \ref{prop:n3_char_n3_case_4_6_15_17}, which gives us $q-1$ pairs, but since char $\F_q = 3$, Lemma \ref{lem:h2_ss_0} states that $q-1$ pairs should be removed.
\end{proof}

\begin{proposition}
    \label{prop:n3_char_n3_case_5_8}
    For char $\F_q \neq 3$ and $j \in \{5,8\}$, $\abs{\Gamma_j(H_2)} = 0$.
\end{proposition}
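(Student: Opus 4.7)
The plan is to read the relevant rows of Table \ref{tab:h2_summary} and use the definition of irregular pairs to eliminate everything. For $j \in \{5,8\}$, the table gives $H_{2,j}(r,s) = 3$ for all $r,s \in \F_q$, and $H_{2,j}(\infty,s) = 0$. Since char $\F_q \neq 3$, the constant $3$ is non-zero in $\F_q$, so no pair with $r \in \F_q$ can satisfy $H_{2,j}(r,s) = 0$. This immediately gives $\abs{\Gamma_j(H_2(\cdot,\cdot))} = 0$ and, by the same reasoning with the roles of the coordinates, $\abs{\Gamma_j(H_2(\cdot,\infty))} = 0$.

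The only remaining candidates are pairs of the form $(\infty,s)$ with $s \in \F_q$, which do satisfy $H_{2,j}(\infty,s) = 0$. However, referring to the definition of irregular pairs in Section \ref{sec:prob}, the pairs $(\infty,\eta)$ for every $\eta \in \F_q$ are classified as irregular in both $\Pen_5$ and $\Pen_8$ (Case 2, where $\A$ is the union of two lines, one tangent to $\B$). Hence none of these pairs lie in $\Phi_j$, and therefore $\abs{\Gamma_j(H_2(\infty,\cdot))} = 0$ as well.

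Combining the three pieces,
\begin{equation*}
    \abs{\Gamma_j(H_2)} = \abs{\Gamma_j(H_2(\cdot,\cdot))} + \abs{\Gamma_j(H_2(\cdot,\infty))} + \abs{\Gamma_j(H_2(\infty,\cdot))} = 0,
\end{equation*}
which is the claim. There is no real obstacle here; the entire argument reduces to checking that the only zeros of $H_{2,j}$ occur at pairs which have already been excluded as irregular.
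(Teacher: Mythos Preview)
Your proof is correct and follows the same approach as the paper's: both observe that $H_{2,j}(r,s)=3\neq 0$ on $\F_q\times\F_q$ and that the pairs $(\infty,s)$ are irregular, hence excluded from $\Phi_j$. One small correction: for $\Pen_8$ the pairs $(\infty,\eta)$ are irregular under Case~1 (the conic $\A$ is the double line $x^2=0$), not Case~2; and $\abs{\Gamma_j(H_2(\cdot,\infty))}=0$ holds not ``by the same reasoning'' but because $s=\infty$ corresponds to a singular $C_j(s)$ and is therefore never valid.
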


\begin{proof}
    All valid pairs $(r,s)$ will have $H_{2,j}(r,s) = 3 \neq 0$. Pairs of the form $(\infty,s)$ correspond to irregular pairs and will not be counted.
\end{proof}

\begin{proposition}
    \label{prop:n3_char_e3_case_5_8}
    For char $\F_q = 3$ and $j \in \{5,8\}$, $\abs{\Gamma_j(H_2)} = q(q-1)$.
\end{proposition}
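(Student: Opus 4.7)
The plan is to exploit the dramatic collapse of Cayley's condition in characteristic 3: for $j \in \{5, 8\}$, Table \ref{tab:h2_summary} gives $H_{2,j}(r,s) = 3$ as a polynomial in $r, s$, and in characteristic 3 this vanishes identically. So \emph{every} pair $(r,s) \in \F_q \times \F_q$ automatically satisfies Cayley's triangle condition, and the computation of $\abs{\Gamma_j(H_2)}$ reduces to a careful bookkeeping of which such pairs actually lie in $\Phi_j$ and correspond to distinct conics.

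First I would decompose $\Gamma_j(H_2)$ according to whether $r$ or $s$ equals $\infty$. The piece $\Gamma_j(H_2(\infty,\cdot))$ is empty: for $j \in \{5,8\}$ every pair $(\infty, \eta)$ with $\eta \in \F_q$ is declared irregular (case 2 for $\Pen_5$ and case 1 for $\Pen_8$), and so is excluded from $\Phi_j$. The piece $\Gamma_j(H_2(\cdot, \infty))$ is also empty because, according to Table \ref{tab:pencil_sing_base}, the index $\infty$ corresponds to a singular element of $\Pen_5$ and $\Pen_8$ (a union of two lines and a double line, respectively), so $\det C_j(\infty) = 0$ and validity fails.

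The heart of the count is $\Gamma_j(H_2(\cdot, \cdot))$, consisting of pairs with $r, s \in \F_q$. Here I would use Table \ref{tab:pencil_sing_base} again to note that for $\Pen_5$ and $\Pen_8$ the \emph{only} singular element of the pencil sits at $\infty$, so $\det C_j(s) \neq 0$ for every $s \in \F_q$; combined with the fact that no irregular pair has $r \in \F_q$, this means every pair $(r,s) \in \F_q \times \F_q$ lies in $\Phi_j$. Since $H_{2,j}(r,s) = 3 = 0$ in characteristic 3, all $q^2$ such pairs are candidates.

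The final step, and the only place needing real care, is to remove the diagonal. By Lemma \ref{lem:h2_ss_0}, in characteristic 3 every pair $(s,s)$ satisfies $H_{2,j}(s,s) = 0$ automatically, but such pairs represent $\A = \B$ and must be discarded under the paper's convention of counting only distinct conic pairs, as was done in Propositions \ref{prop:n3_char_e3_case_3_14_16_19} and \ref{prop:n3_char_e3_case_18}. Subtracting the $q$ diagonal pairs with $s \in \F_q$ from the $q^2$ candidates yields $q^2 - q = q(q-1)$. I anticipate no genuine obstacle here: there is no discriminant analysis, no resultant computation, and no irreducibility argument required. The only subtle point is the consistent application of the distinctness convention between $\Phi_j$ and $\Gamma_j(H_2)$, which is entirely parallel to the treatment already carried out for the other characteristic 3 cases.
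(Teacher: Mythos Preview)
Your proposal is correct and follows essentially the same approach as the paper's own proof: both observe that $H_{2,j}$ collapses to zero in characteristic~3, that $(\infty,s)$ pairs are irregular, that $\infty$ is the only singular index so $s=\infty$ is excluded, and then count the off-diagonal pairs in $\F_q\times\F_q$ to obtain $q(q-1)$. Your write-up is simply more explicit about the decomposition into $\Gamma_j(H_2(\cdot,\cdot))$, $\Gamma_j(H_2(\cdot,\infty))$, $\Gamma_j(H_2(\infty,\cdot))$ and the invocation of Lemma~\ref{lem:h2_ss_0} for the diagonal removal, whereas the paper compresses this into three sentences.
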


\begin{proof}
    All valid pairs $(r,s)$ will satisfy $H_{2,j}(r,s) = 0$. Pairs of the form $(\infty,s)$ correspond to irregular pairs, which we do not count. Since the only singular conic in this case corresponds to $\infty$, then the only condition we need to satisfy is that $r,s \in \F_q$ with $r \neq s$, giving us a total of $q(q-1)$ pairs.
\end{proof}

We summarize all the counts derived for the triangle case in Theorem \ref{thm:n3_counts}.

\begin{theorem}
\label{thm:n3_counts}
Any valid pair corresponding to a $3$-Poncelet pair must be a smooth pair. The number of pairs corresponding to $3$-Poncelet pairs among the valid pairs, for each pencil, is summarized in Table \ref{tab:n3_count}.

\begin{table}[!htb]
\centering
\renewcommand{\arraystretch}{2}%
\caption{Count of $3$-Poncelet pairs for each pencil in $\Prj^2(\F_q)$}
\begin{tabular}{l|c|c}
\hline
$\Pen_j$ & char $\F_q \neq 3$ & char $\F_q = 3$ \\
\hline
$\Pen_{3}, \Pen_{16}$       &  $q-5$  & $q-3$  \\
\hline
$\Pen_{4}, \Pen_{6}, \Pen_{15}, \Pen_{17}$      &   $q-1$           &  $0$           \\
\hline
$\Pen_{5}, \Pen_{8}$      &  $0$            &    $q(q-1)$         \\
\hline
$\Pen_{14}, \Pen_{19}$       &  $q-1$            &  $q-1$           \\
\hline
$\Pen_{18}$       &   $q+1$           &    $q$  \\
\hline
\end{tabular}

\label{tab:n3_count}
\end{table}

\end{theorem}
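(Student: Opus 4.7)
The plan is to assemble the theorem directly from the eight preceding propositions, since the counts in Table \ref{tab:n3_count} are precisely their conclusions organised by pencil and by characteristic; no essentially new ingredient is required, so what follows is really a bookkeeping argument.

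First I would dispose of the preliminary assertion that every valid pair corresponding to a $3$-Poncelet pair must be a smooth pair. This is an immediate consequence of part (2) of Lemma \ref{lem:h2_ss_0}: if $C_j(r)$ is singular, then $H_{2,j}(r,s)=0$ forces $r=s$, but valid pairs are by definition distinct (we only consider distinct conic pairs). Hence any valid pair with $H_{2,j}(r,s)=0$ must in fact have $C_j(r)$ non-singular, which is exactly the defining condition for a smooth pair.

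Next I would fill in Table \ref{tab:n3_count} by invoking the appropriate proposition for each pencil, grouped according to the shape of $H_{2,j}$ in Table \ref{tab:h2_summary}. The pencils $\Pen_{3}, \Pen_{14}, \Pen_{16}, \Pen_{19}$, whose $H_{2,j}$ is quadratic in $r$ with discriminant of the form \eqref{eqn:discriminant_2}, are handled by Propositions \ref{prop:n3_char_n3_case_3_14_16_19} and \ref{prop:n3_char_e3_case_3_14_16_19} depending on whether char $\F_q\neq 3$ or char $\F_q=3$. The pencil $\Pen_{18}$, which requires separate treatment because $H_{2,18}(\infty,s)$ is a non-constant polynomial in $s$ that can vanish, is covered by Propositions \ref{prop:n3_char_n3_case_18} and \ref{prop:n3_char_e3_case_18}. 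The pencils $\Pen_{4}, \Pen_{6}, \Pen_{15}, \Pen_{17}$, whose $H_{2,j}$ is linear in both $r$ and $s$, are covered by Propositions \ref{prop:n3_char_n3_case_4_6_15_17} and \ref{prop:n3_char_e3_case_4_6_15_17}. Finally, the pencils $\Pen_{5}, \Pen_{8}$, whose $H_{2,j}$ is the non-zero constant $3$, are covered by Propositions \ref{prop:n3_char_n3_case_5_8} and \ref{prop:n3_char_e3_case_5_8}.

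Since every entry in the table has already been established in one of the cited propositions, there is no genuine obstacle; the whole step is organisational. The only thing that really needs care is confirming that the four groupings above partition the index set $\{3,4,5,6,8,14,15,16,17,18,19\}$ and that each pencil is placed in the row whose form of $H_{2,j}$ it actually matches. A quick cross-check against Table \ref{tab:h2_summary} confirms both, at which point reading off the cited counts yields exactly Table \ref{tab:n3_count}.
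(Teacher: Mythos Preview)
Your proposal is correct and matches the paper's own proof essentially verbatim: the paper also derives the smoothness claim from Lemma~\ref{lem:h2_ss_0} and then simply cites Propositions~\ref{prop:n3_char_n3_case_3_14_16_19}--\ref{prop:n3_char_e3_case_5_8} to populate the table. Your additional remarks about the grouping according to the shape of $H_{2,j}$ are accurate and, if anything, a bit more explanatory than the paper's one-line citation list.
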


\begin{proof}
    The fact that all valid pairs corresponding to $3$-Poncelet pairs are smooth pairs follows from Lemma \ref{lem:h2_ss_0}. The counts are obtained from Propositions \ref{prop:n3_char_n3_case_3_14_16_19}, \ref{prop:n3_char_e3_case_3_14_16_19}, \ref{prop:n3_char_n3_case_18}, \ref{prop:n3_char_e3_case_18}, \ref{prop:n3_char_n3_case_4_6_15_17}, \ref{prop:n3_char_e3_case_4_6_15_17}, \ref{prop:n3_char_n3_case_5_8}, and \ref{prop:n3_char_e3_case_5_8}.
\end{proof}

Using Theorem \ref{thm:n3_counts}, we can now compute the probabilities of obtaining a $3$-Poncelet pair in pencil $\Pen_j$ under the probability space stated at the beginning of Section \ref{sec:prob}. In particular, considering the sample space of valid pairs $\Phi_j$ and the uniform measure, the probability of obtaining a $3$-Poncelet pair in pencil $\Pen_j$ is given by
\begin{equation*}
    \Prb(\Gamma_{j}(H_2)|\Phi_{j}) = \dfrac{\abs{\Gamma_{j}(H_2)}}{\abs{\Phi_{j}}}.
\end{equation*}

Upon restricting the sample space to smooth pairs $\Psi_j$, the probability of obtaining a $3$-Poncelet pair in pencil $\Pen_j$ is given by
\begin{equation*}
    \Prb(\Gamma_{j}(H_2)|\Psi_{j}) = \dfrac{\abs{\Gamma_{j}(H_2) \cap \Psi_{j}}}{\abs{\Psi_{j}}}
\end{equation*}
with $\abs{\Gamma_{j}(H_2) \cap \Psi_{j}} = \abs{\Gamma_{j}(H_2)}$ by Theorem \ref{thm:n3_counts}. 

We summarize these probabilities in Corollary \ref{cor:n3_prob}. Note that the counts used as numerators in the probability are the same for both the valid pairs and smooth pairs, but the denominators differ, and we need to adjust the counting for the irregular pairs.

\begin{corollary}
    \label{cor:n3_prob}
    The probabilities of obtaining a valid pair or a smooth pair corresponding to a $3$\nobreakdash-Poncelet pair for each pencil $\Pen_j$ under our assumed probability space are summarized in Table \ref{tab:n3_prob}.

    \begin{table}[!htb]
        \centering
        \renewcommand{\arraystretch}{2}%
        \caption{Probabilities of obtaining a $3$-Poncelet pair for each pencil in $\Prj^2(\F_q)$}
        \begin{tabular}{l|cc|cc}
        \hline
        \multicolumn{1}{l|}{\multirow{2}{*}{$\Pen_j$}} & \multicolumn{2}{c|}{Smooth Pairs} & \multicolumn{2}{c}{Valid Pairs}  \\ \cline{2-5} 
        \multicolumn{1}{c|}{}                        & \multicolumn{1}{c|}{$ \chr\F_q \neq 3$}  & $ \chr\F_q = 3$& \multicolumn{1}{c|}{$ \chr\F_q \neq 3$} & $ \chr\F_q = 3$ \\ \hline
            $\Pen_{3}, \Pen_{16}$          & \multicolumn{1}{c|}{$\frac{q-5}{(q-2)(q-3)}$} &  $\frac{1}{q-2}$ for $q > 3 ^{\dag}$  & \multicolumn{1}{c|}{$\frac{q-5}{q(q-2)}$}   &   $\frac{q-3}{q(q-2)}$   \\ \hline
            $\Pen_{4}, \Pen_{15}$     & \multicolumn{1}{c|}{$\frac{1}{q-2}$} &  $0$  & \multicolumn{1}{c|}{$\frac{1}{q-1}$}   &   $0$   \\ \hline
            $\Pen_{5}, \Pen_{8}$      & \multicolumn{1}{c|}{$0$} &  $1$  & \multicolumn{1}{c|}{$0$}   &   $1$   \\ \hline
            $\Pen_{6}, \Pen_{17}$     & \multicolumn{1}{c|}{$\frac{1}{q-2}$} &  $0$  & \multicolumn{1}{c|}{$\frac{1}{q-2}$}   &   $0$   \\ \hline
            $\Pen_{14}, \Pen_{19}$       & \multicolumn{1}{c|}{$\frac{1}{q}$} &  $\frac{1}{q}$  & \multicolumn{1}{c|}{$\frac{q-1}{q^2}$}   &   $\frac{q-1}{q^2}$   \\ \hline
            $\Pen_{18}$        & \multicolumn{1}{c|}{$\frac{1}{q}$} &  $\frac{1}{q+1}$  & \multicolumn{1}{c|}{$\frac{1}{q}$}   &   $\frac{1}{q+1}$   \\ \hline
        \end{tabular}
        
        \label{tab:n3_prob}
        \flushleft
        \footnotesize{${}^{\dag}$: The sample space is empty for $q=3$}
    \end{table}
    
\end{corollary}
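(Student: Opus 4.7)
The plan is to derive each probability in Table \ref{tab:n3_prob} by dividing the count from Theorem \ref{thm:n3_counts} by the size of the appropriate sample space. Since Theorem \ref{thm:n3_counts} already asserts that every valid $3$-Poncelet pair is smooth, the numerators $|\Gamma_j(H_2)|$ and $|\Gamma_j(H_2) \cap \Psi_j|$ coincide, and the only remaining work is to determine $|\Phi_j|$ and $|\Psi_j|$ for each $j \in \{3,4,5,6,8,14,15,16,17,18,19\}$.

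First I would fix a pencil $\Pen_j$ and extract two pieces of combinatorial data: the number $k_j$ of singular conics in $\Pen_j$ (that is, the number of $s \in \Prj^1(\F_q)$ with $\det C_j(s) = 0$), read from Table \ref{tab:pencil_sing_base}, and the number $|I_j|$ of irregular pairs, read from the definition of irregular pair. Smooth pairs require both $r$ and $s$ to be non-singular with $r \neq s$, so
\begin{equation*}
|\Psi_j| = (q+1-k_j)(q-k_j).
\end{equation*}
For valid pairs, $s$ must be non-singular and $r \neq s$, after which the irregular pairs are removed; in every listed case the irregular pairs have $r \in \{0,\infty\}$ corresponding to a singular conic of $\Pen_j$, while $s$ is non-singular, so they lie off the diagonal and are automatically absent from $\Psi_j$. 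Hence
\begin{equation*}
|\Phi_j| = (q+1-k_j)\,q - |I_j|.
\end{equation*}

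Next I would apply this template to each of the eleven pencils, with $k_j \in \{0,1,2,3\}$ and $|I_j| \in \{0,\,q-1,\,q,\,2(q-1)\}$ determined pencil by pencil. For instance, $\Pen_3$ and $\Pen_{16}$ have $k=3$ and $|I|=0$, giving $|\Psi|=(q-2)(q-3)$ and $|\Phi|=(q-2)q$; $\Pen_6$ and $\Pen_{17}$ have $k=2$ and $|I|=2(q-1)$, since both Case~1 and Case~2 of the definition contribute but at disjoint values of $r$, yielding $|\Phi|=|\Psi|=(q-1)(q-2)$; and $\Pen_{18}$ has $k=0$ and $|I|=0$, giving $|\Phi|=|\Psi|=q(q+1)$. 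Dividing the numerators from Theorem \ref{thm:n3_counts} by these denominators then produces every entry of Table \ref{tab:n3_prob}.

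The main obstacle is bookkeeping rather than mathematics: correctly tabulating $k_j$ from Table \ref{tab:pencil_sing_base} (in particular verifying whether $\infty$ is singular, which occurs for every pencil except $\Pen_{18}$), and checking the disjointness of the two kinds of irregular pairs for $\Pen_6$ and $\Pen_{17}$ where both cases apply. Once the pairs $(k_j, |I_j|)$ are collected, the corollary follows by routine division in each of the eleven cases.
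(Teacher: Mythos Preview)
Your proposal is correct and follows the same approach as the paper: the paper states the corollary follows from Theorem~\ref{thm:n3_counts} by dividing the numerators (which coincide for $\Phi_j$ and $\Psi_j$ since every valid $3$-Poncelet pair is smooth) by $|\Phi_j|$ or $|\Psi_j|$, leaving the actual computation of those denominators to the reader. Your template $|\Psi_j|=(q+1-k_j)(q-k_j)$ and $|\Phi_j|=(q+1-k_j)q-|I_j|$, together with the pencil-by-pencil values of $(k_j,|I_j|)$, makes that step explicit and checks out against every entry of Table~\ref{tab:n3_prob}.
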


\begin{remark}
    Following the approach of \cite{chi2017ptc} of combining probabilities from multiple pencils, we recover their main result that the asymptotic probability of obtaining a $3$-Poncelet pair among smooth, transversally intersecting conics in $\Prj^2(\F_q)$ is $\frac{1}{q}$ when char~$\F_q \neq 3$. Under the same approach, our computation suggests that this asymptotic probability remains to be $\frac{1}{q}$ even if char~$\F_q = 3$.
\end{remark}

\subsection{Probabilities for Poncelet Tetragon}

We derive the form of $H_3$ from Cayley's condition in terms of $r$ and $s$ for each pencil $\Pen_j$. We use the notation $H_{3,j}$ to refer to the form of $H_3$ for pencil $\Pen_j$ and summarize them in Table \ref{tab:h3_summary}.

\begin{table}[!htb]
\centering
\renewcommand{\arraystretch}{1.5}%
\caption{Summary of Cayley's condition for the tetragon case}
\begin{tabular}{c|l}
    \hline
    $\Pen_j$ & $H_{3,j}{}^{\dag}$ \\
    \hline
    $\Pen_{3}$ & $H_{3,3} (r,s) = [r - s^2] [ (2 s - 1)r  - s^2] [r + (s^2 - 2 s)]$ \\
    & $H_{3,3}(\infty,s) = 2 s - 1$ \\
    \hline
    $\Pen_{4}$ & $ H_{3,4} (r,s) = 2 r - s$ \\
    & $H_{3,4}(\infty,s) = 0$ \\
    \hline
    $\Pen_{5}$ & $ H_{3,5} (r,s) = u {}^{\ddag}$ \\
    & $H_{3,5}(\infty,s) = 0$ \\
    \hline
    $\Pen_{6}$ & $ H_{3,6} (r,s) = 2 r - s$ \\
    & $H_{3,6}(\infty,s) = 0$ \\
    \hline
    $\Pen_{8}$ & $ H_{3,8} (r,s) = u {}^{\ddag}$ \\
    & $H_{3,8}(\infty,s) = 0$ \\
    \hline 
    $\Pen_{14}$ & $ H_{3,14} (r,s) = \, [ (2 e s - 1) r + (-e s^2 + 1)] [ (1 - 4 e) r^2 + (8 e s - 2 s) r + (-e^2 s^4 + 2 e s^3 - 6 e s^2 + 2 s - 1)]$ \\
    & $H_{3,14}(\infty,s) = 2 e s - 1$ \\
    \hline
    $\Pen_{15}$ & $ H_{3,15} (r,s) = 2 r - s$ \\
    & $H_{3,15} (\infty,s) = 0$ \\
    \hline
    $\Pen_{16}$ & $ \begin{aligned}
        H_{3,16} (r,s) = \, & [(2 s - 1)r + (- s^2 -4 d e + d + e)]  \\
        & [ (1-4d)(1-4e) r^2 + ((-32 d e  + 8 d  + 8 e  - 2 )s) r + (-s^4 + 2 s^3 + (24de - 6d - 6e)s^2 \\
        & + (-8de + 2d + 2e)s + (-16d^2e^2 + 8d^2e - d^2 + 8de^2 - 2de - e^2)] \\
    \end{aligned}$ \\
    & $H_{3,16} (\infty,s) = 2 s - 1$ \\
    \hline
    $\Pen_{17}$ & $ H_{3,17}(r,s) = r^2 (r - 2 s)$ \\
    & $H_{3,17}(\infty,s) = u {}^{\ddag}$ \\
    \hline 
    $\Pen_{18}$ & $\begin{aligned}
        H_{3,18} (r,s) = \, & [s^6 + 2bs^5 +5c s^4 + 20 s^3 + (20b-5c^2)s^2 + (8b^2 - 2bc^2 - 4c)s + (4bc - c^3 - 8)] r^3  \\
        &+ [b s^6 + (4b^2 - 6c)s^5 + (10bc-45)s^4 +(-40b + 20c^2)s^3 \\
        &+ (-20b^2 + 5bc^2 + 30c)s^2 + (-8bc + 2c^3 + 36)s +(4b - c^2)] r^2\\
        &+ [(-b^2+4c)s^6 + (2b^3 - 8bc + 36)s^5 + (5b^2c + 30b - 20c^2)s^4 \\
        &+ (20b^2 - 40c)s^3 + (10bc - 45)s^2 + (-6b+4c^2)s + c] r\\
        &+ [(-b^3 + 4bc - 8)s^6 + (-2b^2c - 4b + 8c^2)s^5 \\
        &+ (-5b^2 + 20c)s^4 + 20 s^3 + 5b s^2 + 2c s + 1]
    \end{aligned}$ \\
    & $H_{3,18} (r,\infty) = r^3 + br^2 + (-b^2 + 4c)r + (-b^3 + 4bc - 8)$ \\
    & $\begin{aligned}
        H_{3,18} (\infty,s) = \, & s^6 + 2b s^5 +5c s^4 + 20 s^3 + (20b-5c^2)s^2 + (8b^2 - 2bc^2 - 4c)s + (4bc - c^3 - 8)
    \end{aligned}$ \\
    \hline
    $\Pen_{19}$ & $  \begin{aligned}
        H_{3,19} (r,s) = \, & [ (2 \nu s + \rho)r + (-\nu s^2 + \sigma^2)]  \\
        & [(\rho^2 - 4 \nu \sigma^2) r^2  + ((8 \nu \sigma^2 - 2 \rho^2)s)r + (-\nu^2 s^4 - 2 \nu \rho s^3 -6 \nu \sigma^2 s^2 -2\rho\sigma^2 - \sigma^4)]
    \end{aligned}$ \\
    & $H_{3,19}(\infty,s) = 2 \nu s + \rho$ \\
    \hline 
\end{tabular}

\label{tab:h3_summary}

\flushleft

\footnotesize{$^\dag$ $H_{3,j} = H_3$ from Cayley's condition for pencil $\Pen_j$ where non-zero scalar multiples and denominators are ignored}\\
\footnotesize{$^\ddag$ $u$ is a non-zero constant for char $\F_q \neq 2$} \\

\end{table}

\begin{example}
    As an example of a valid $4$-Poncelet pair that is not a smooth pair, consider $q = 11$ and the pair $(\infty,6)$ in pencil $\Pen_3$. From Table \ref{tab:h3_summary}, we can see that $H_{3,3}(\infty,6) = 0$ implying that the conic pair $(\infty,6)$ in pencil $\Pen_3$ correspond to a $4$-Poncelet pair. We illustrate a Poncelet tetragon that can be constructed from this $4$-Poncelet pair in Figure \ref{fig:poncelet_n4_p3}. 
\end{example}

\begin{remark}
    In Figure \ref{fig:poncelet_n4_p3}, we see that the vertices of our Poncelet tetragon alternate between the two lines making up our singular conic $\A$. This behavior persists for $n > 4$, and in general, we can only inscribe a Poncelet $n$-gon in a singular conic $\A$ if $n$ is even. This has been proven in \cite{drarad2025poncelet} for the real projective plane.
\end{remark}

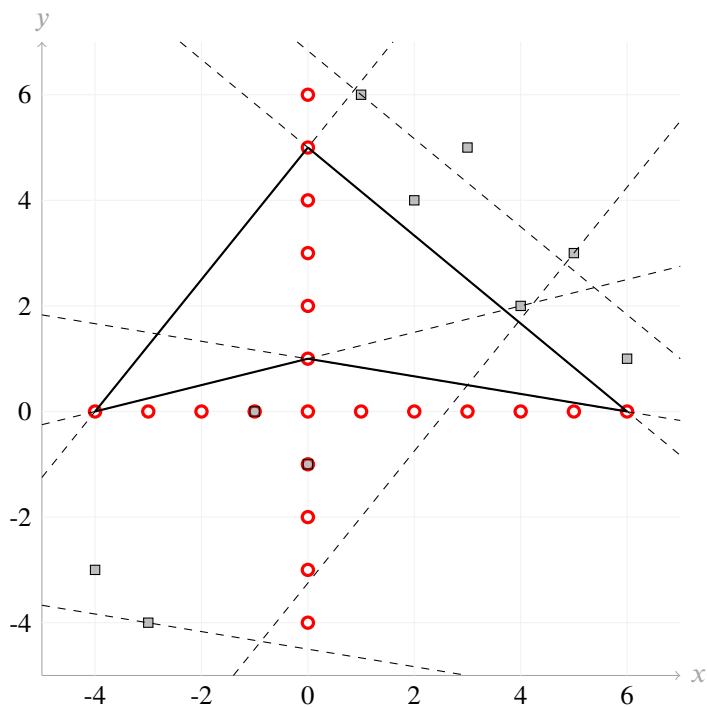
\begin{figure}[!htb]
\centering
\begin{tikzpicture}[scale=0.7]

\def\xmin{-5} \def\xmax{7}
\def\ymin{-5} \def\ymax{7}

  \draw[->, gray!70] (\xmin,-5) -- (\xmax,-5) node[right] {$x$};
  \draw[->, gray!70] (-5,\ymin) -- (-5,\ymax) node[above] {$y$};

\def\labelsep{1} 

\foreach \x in {-4,-2,0,2,4,6} {
    \draw[gray!70] (\x,-4);
    \node[below] at (\x,-4-\labelsep) {\small \x}; 
}

\foreach \y in {-4,-2,0,2,4,6} {
    \draw[gray!70] (-4,\y);
    \node[left] at (-4-\labelsep,\y) {\small \y}; 
}

  \foreach \x in {-4,-2,0,2,4,6} \draw[gray!10] (\x,\ymin) -- (\x,\ymax);
  \foreach \y in {-4,-2,0,2,4,6} \draw[gray!10] (\xmin,\y) -- (\xmax,\y);

  \foreach \x/\y in {0/0, 0/1, 0/2, 0/3, 0/4, 0/5, 0/6, 0/-4, 0/-3, 0/-2, 0/-1,
                     1/0, 2/0, 3/0, 4/0, 5/0, 6/0, -4/0, -3/0, -2/0, -1/0} {
    \filldraw[white, fill opacity=0.3, draw=red, very thick] (\x,\y) circle (3pt);
  }

  \foreach \x/\y in {0/-1, 1/6, 2/4, 3/5, 4/2, 5/3, 6/1, -4/-3, -3/-4, -1/0} {
    \filldraw[gray, fill opacity=0.5, draw=black] (\x,\y) ++(-2.5pt,-2.5pt)  rectangle ++(5pt,5pt);
  }

  \draw[dashed, thin] (-5,-1.25) -- (1.6,7);
  \draw[dashed, thin] (-1.4,-5) -- (7,5.5);
  
  \draw[dashed, thin] (-5,-0.25) -- (7,2.75);
  
  \draw[dashed, thin] (-5,1.83) -- (7,-0.17);
  \draw[dashed, thin] (-5,-3.67) -- (3,-5);
  
  \draw[dashed, thin] (-2.4,7) -- (7,-0.83);
  \draw[dashed, thin] (-0.2,7) -- (7,1);

  \draw[thick] (0,5) -- (-4,0);
  \draw[thick] (-4,0) -- (0,1);
  \draw[thick] (0,1) -- (6,0);
  \draw[thick] (6,0) -- (0,5);

\end{tikzpicture}

\caption{Poncelet tetragon in $\mathbb{F}_{11}^2$}
\label{fig:poncelet_n4_p3}

\flushleft
\vspace{0.1cm}
\footnotesize{$\bullet$ Conics are viewed in the affine plane $z=1$. $[x:y:1]$ is associated with the ordered pair $(x,y) \in \mathbb{F}_{11}^2$.}\\
\footnotesize{$\bullet$ Circles are the points of conic $\A: xy=0 $ and filled squares are the points of conic $\B: 6xy+xz+yz+z^2=0 $ in $ \mathbb{F}_{11}^2$.}\\
\footnotesize{$\bullet$ Broken lines represent the tangent lines to $\mathscr{B}$. Parallel lines represent the same lines via modulo arithmetic.}\\
\footnotesize{$\bullet$ Solid black lines represent the part of the tangent line connecting two vertices.}

\end{figure}

Similar to the triangle case, we also have a version of Lemma \ref{lem:h2_ss_0} to detect pairs $(r,s)$ that satisfy $H_{3,j}(r,s) = 0$ but $r = s$. 
\begin{lemma}
    \label{lem:h3_ss_0}
    Suppose char $\F_q \neq 2$ and let $H_{3,j}$ be the form of $H_3$ under $\Pen_j$. $H_{3,j}(s,s) = 0$ if and only if the matrix representation, $C_j(s)$, of the element corresponding to $s \in \Prj^1(\F_q)$ in pencil $\Pen_j$ is singular.
\end{lemma}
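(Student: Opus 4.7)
The plan is to mirror the first half of the proof of Lemma~\ref{lem:h2_ss_0}. I would set $A = B = C_j(s)$ in the Cayley expansion, so that
\begin{equation*}
\sqrt{\det(tA+B)} = \sqrt{(1+t)^3\det(C_j(s))} = (1+t)^{3/2}\sqrt{\det(C_j(s))},
\end{equation*}
and read off the coefficient of $t^3$ via the generalized binomial expansion. The relevant binomial coefficient is $\binom{3/2}{3} = \frac{(3/2)(1/2)(-1/2)}{3!} = -\frac{1}{16}$, yielding
\begin{equation*}
H_3(s,s) = -\tfrac{1}{16}\sqrt{\det(C_j(s))}.
\end{equation*}
Since char~$\F_q \neq 2$, the scalar $-1/16$ is a unit in $\F_q$, so $H_3(s,s) = 0$ if and only if $\det(C_j(s)) = 0$, i.e., if and only if $C_j(s)$ is singular. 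This already delivers the entire statement in one stroke.

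A contrast with Lemma~\ref{lem:h2_ss_0} worth flagging in the write-up: the analogous coefficient there is $\binom{3/2}{2} = 3/8$, which vanishes in characteristic~$3$ and forces the auxiliary ``char~$\F_q = 3$'' alternative of that lemma. The coefficient $-1/16$ appearing here carries no factor of $3$, and this is precisely why the present lemma has no case split on characteristic.

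To tie the abstract $H_3$ back to the tabulated forms $H_{3,j}$ of Table~\ref{tab:h3_summary} --- which differ from $H_3$ only by nonzero scalar multiples and denominators, so that vanishing is preserved on the diagonal $r = s$ --- I would spot-check each pencil by substituting $r = s$ into its table entry and verifying that the resulting polynomial in $s$ vanishes precisely at the singular indices listed in Table~\ref{tab:pencil_sing_base}, handling $s = \infty$ via the $H_{3,j}(\infty,\cdot)$ row. The factorizations already displayed in Table~\ref{tab:h3_summary} make each such check a short calculation, so the main obstacle is merely the per-pencil bookkeeping across the eleven pencils rather than any genuine mathematical difficulty.
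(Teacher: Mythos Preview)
Your approach is correct and essentially identical to the paper's own proof: set $A=B=C_j(s)$, expand $\sqrt{(1+t)^3\det(C_j(s))}$, and read off $H_3(s,s)=-\tfrac{1}{16}\sqrt{\det(C_j(s))}$. The paper stops there; your added remarks on why no characteristic~$3$ split arises and the per-pencil spot-checks are sound but optional elaborations.
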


\begin{proof}
    Let $A = B = C_j(s)$, then $\sqrt{det(tA+B)} = \sqrt{(t+1)^3 \det(C_j(s))}$ which yields $H_{3,j}(s,s) = \frac{-1}{16}\sqrt{\det(C_j(s))}$ which is zero if and only if $A = C_j(s)$ is singular.
\end{proof}

Adapting a similar notation from the triangle case, we denote the set of valid pairs that correspond to $4$-Poncelet pairs as $\Gamma_j(H_3) = \{(r,s)\in \Phi_j \, | \, H_{3,j}(r,s) = 0 \}$, which we decompose into three disjoint sets

$\Gamma_j(H_3(\cdot,\cdot)) = \{(r,s)\in \Gamma_j(H_3) | \, r,s \in \F_q \}$,

$\Gamma_j(H_3(\cdot,\infty)) = \{(r,s)\in \Gamma_j(H_3) | \, r \in \F_q , \,  s = \infty \}$, and

$\Gamma_j(H_3(\infty,\cdot)) = \{(r,s)\in \Gamma_j(H_3) | \, r = \infty, \, s \in \F_q \}$.

Our approach to count $4$-Poncelet pairs is to treat $H_{3,j}(r,s)$ as a polynomial in $\F_q[s] [r]$ where we can see from Table \ref{tab:h3_summary} that it is at most cubic in $r$ for all $j$. We are going to show that for $j\neq 18$, $H_{3,j}(r,s)$ can be expressed as a product of factors $f_i(r,s) \in \F_q[s] [r]$ that are either linear or quadratic in $r$. 

\begin{proposition}
    \label{prop:n4_case_3_16}
    For $j \in \{3,16\}$, $\abs{\Gamma_j(H_3)} = 3q - 6$ and $\abs{\Gamma_j(H_3) \cap \Psi_j} = 3q-9$.
\end{proposition}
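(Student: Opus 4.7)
The plan is to exploit the explicit factorizations of $H_{3,j}(r,s)$ recorded in Table \ref{tab:h3_summary}: for $\Pen_3$, a product of three linear factors in $r$, and for $\Pen_{16}$, a product of one linear and one quadratic factor in $r$. I will count pairs by summing contributions from each factor, treating it as a polynomial in $r$ over $\F_q[s]$, adjusting for coincidences among factors and for the degenerate value $s = 1/2$ where a leading coefficient in $r$ vanishes; the case $r = \infty$ is handled separately via $H_{3,j}(\infty, s) = 2s - 1$.

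For $\Pen_3$, I would write $H_{3,3}(r,s) = (r-s^2)\bigl((2s-1)r - s^2\bigr)(r + s^2 - 2s)$. For valid $s \in \F_q\setminus\{0,1\}$, the first and third factors each contribute a unique root in $r$; the middle factor contributes a root unless $s = 1/2$, where its constant term $-1/4$ is non-zero. A direct check shows that pairwise coincidences among the three candidate roots, and the condition $r = s$, both force $s \in \{0,1\}$, which is excluded. Therefore each valid $s \neq 1/2$ yields three pairs and $s = 1/2$ yields two; adding the pair $(\infty, 1/2)$ from $H_{3,3}(\infty, s) = 2s - 1$ produces $3(q-3) + 2 + 1 = 3q - 6$.

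For $\Pen_{16}$, I would decompose $H_{3,16}(r,s) = L(r,s) Q(r,s)$ with $L$ linear and $Q$ quadratic in $r$. The linear factor $L$ contributes one $r \in \F_q$ per valid $s \neq 1/2$; at $s = 1/2$ its constant term specializes to $-(1-4d)(1-4e)/4$, which is non-zero by the irreducibility of $T^2 + T + d$ and $T^2 + T + e$. For $Q$, I would compute its discriminant $D_Q(s)$ in $r$ and verify by direct algebra a factorization of the form $D_Q(s) = 16\, \kappa_{16}(s)^2 \varphi_{16}(s)$ paralleling Equation \ref{eqn:discriminant_2}, with $\kappa_{16}, \varphi_{16}$ as in Tables \ref{tab:h2_root_kappa_summary} and \ref{tab:h2_disc_varphi_summary}; the count of $r$-roots then follows from Lemmas \ref{lem:kappa_0_sing}, \ref{lem:varphi_disc0_char3}, and \ref{lem:varphi_square_count}, exactly as in the triangle case. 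Overlaps between $L$ and $Q$ would be excluded by a resultant computation in the spirit of Proposition \ref{prop:n3_char_n3_case_18}, and the pair $(\infty, 1/2)$ is added. Summing carefully over the characteristic $\neq 3$ and $= 3$ subcases should again produce $3q - 6$.

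For the smooth count in both pencils, I would substitute each singular $r$-value ($r \in \{0,1,\infty\}$ for $\Pen_3$, $r \in \{\eta_+, \eta_-, \infty\}$ for $\Pen_{16}$) into each factor and solve for $s$, finding in each pencil exactly three non-smooth pairs: one from $r = \infty$ via $s = 1/2$, and two from the finite singular indices. These remain distinct across characteristics---even when characteristic $3$ collapses certain special indices---so subtracting yields $\abs{\Gamma_j(H_3) \cap \Psi_j} = 3q - 9$. The hard part will be the $\Pen_{16}$ quadratic factor: verifying the discriminant identity $D_Q(s) = 16\kappa_{16}(s)^2\varphi_{16}(s)$ and tracking the characteristic $3$ subcase, where $\eta_+$, $\eta_-$, and $1/2$ may coincide and alter the bookkeeping for both the valid and the smooth counts.
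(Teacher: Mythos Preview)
Your treatment of $\Pen_3$ matches the paper's exactly. The issue is $\Pen_{16}$: your expectation that $D_Q(s) = 16\,\kappa_{16}(s)^2\,\varphi_{16}(s)$ is wrong, and the machinery you plan to invoke (Lemmas \ref{lem:varphi_disc0_char3}, \ref{lem:varphi_square_count}, a characteristic-$3$ case split, a resultant for $L$--$Q$ overlaps) is unnecessary. The key observation you are missing is that the leading coefficient of $Q$ in $r$ is $(1-4d)(1-4e)$, which is a \emph{non-zero square} in $\F_q$ because both $1-4d$ and $1-4e$ are non-squares. Writing $u=\sqrt{(1-4d)(1-4e)}$, the quadratic $Q$ factors over $\F_q[s]$ into two linear factors in $r$, and a direct computation gives $D_Q(s)=4u^2\kappa_{16}(s)^2$, a perfect square for every $s$. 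So for every valid $s$ the factor $Q$ contributes exactly two $r$-values, and $\Pen_{16}$ has the same three-linear-factor structure as $\Pen_3$; this is precisely what the paper does.

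Once $Q$ is split, the counting is identical to your $\Pen_3$ argument: pairwise coincidences among the three roots force $s\in\{\eta_+,\eta_-\}$, the linear factor $L$ drops to a non-zero constant at $s=1/2$, and $(\infty,1/2)$ is the sole pair with $r=\infty$. No separate characteristic-$3$ analysis is needed, and your worry that $\eta_\pm$ might coincide with $1/2$ is unfounded since that would force $u=0$. Your plan for the smooth count is fine and agrees with the paper.
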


\begin{proof}
    We can show that $H_{3,j}(r,s) = f_1(r,s)f_2(r,s)f_3(r,s)$ where each factor $f_i(r,s)$ is linear. Since the product is zero if at least one of the factors are zero, we let $A_i = \{(r,s)\in \Phi_j \, | \, f_i(r,s) = 0 \}, i =1,2,3$ and the union $\bigcup_{i=1}^3 A_i$ contains all pairs $(r,s)$ such that $H_3(r,s) = 0$ and its cardinality can be obtained by Inclusion-Exclusion principle.
    
    For $\Pen_3$, we immediately see from Table \ref{tab:h3_summary} that we can let
    \begin{equation*}
        f_1(r,s) = r-s^2, \, f_2(r,s) = (2s-1)r - s^2, \, f_3(r,s) = r + (s^2-2s).
    \end{equation*}
    
    Now, $f_1(r,s) = 0 \iff r = s^2$ which gives us $q$ pairs since we can choose any $s \in \F_q$. For the second factor, we consider two cases. If $2s-1 = 0$ then $s = \frac{1}{2}$ which will make $f_2(r,s) = \frac{-1}{4} \neq 0$ which does not contribute any pair. If $2s-1 \neq 0$, then $f_2(r,s) = 0 \iff r = \frac{s^2}{2s-1}$ which gives us $q-1$ pairs since we cannot choose $s = \frac{1}{2}$. Finally, $f_3(r,s) = 0 \iff r = 2s-s^2$ which gives us again $q$ pairs. Going to the pairwise relations, $f_1(r,s) = f_2(r,s) = 0 \iff r = s^2 = \frac{s^2}{2s-1}$. Note that from the argument above, $s \neq \frac{1}{2}$ since $f_2(r,s) = 0$. Thus, we can solve the quadratic equation $s^2 = \frac{s^2}{2s-1}$ which is equivalent to $2s^2(s-1)=0$. Hence, we only have $2$ choices for this pair, which are $s=0$ or $s=1$. This will be the same idea for $f_1(r,s) = f_3(r,s) = 0$ and $f_2(r,s) = f_3(r,s) = 0$ which will both yield the same solution of $s=0$ and $s=1$. This also immediately gives us that $f_1(r,s) = f_2(r,s) = f_3(r,s) = 0 \iff s \in \{0,1\}$. Finally, we use Lemma \ref{lem:h3_ss_0} to remove unwanted pairs $(0,0)$ and $(1,1)$. 
    
    Notice that $H_3(\infty,s) = 2s-1$ and is zero if and only if $s=\frac{1}{2}$. Thus, the pair $\left(\infty,\frac{1}{2}\right)$ is the only valid pair in $\Gamma_3(H_3(\infty,\cdot))$. The set $\Gamma_3(H_3(\cdot,\infty))$ is empty since we don't consider the case where $C_j(s)$ is singular.
    
    Thus, for $j=3$, we obtain the count
    \begin{align*}
        \abs{\Gamma_j(H_3)} &= \abs{\Gamma_j(H_3(\cdot,\cdot))} + \underbrace{\abs{\Gamma_j(H_3(\cdot,\infty))}}_{=0} + \underbrace{\abs{\Gamma_j(H_3(\infty,\cdot))}}_{=1} \\
        &= \underbrace{q + (q-1) + q  - 2-2-2+2}_{\text{Inclusion-Exclusion}} - \underbrace{2}_{\text{Lemma } \ref{lem:h3_ss_0}} + 1   = 3q-6.
    \end{align*}
    
    Restricting to the smooth conics, we need to find the valid pairs in $\Gamma_j(H_3)$ that are not smooth. We already have the pair $\left(\infty,\frac{1}{2}\right)$ from $r = \infty$. Looking at the other singular elements, if $r=0$, then we obtain $H_3(0,s) = s^5 (s-2)$ which gives us one valid pair $(0,2)$. Finally, for $r=1$, $H_3(1,s) = (s-1)^5(s+1)$ giving us another valid pair $(1,-1)$. In total, we have $3$ valid pairs corresponding to $4$-Poncelet pairs that are not smooth pairs. Hence, for $j=3$,
    \begin{align*}
        \abs{\Gamma_j(H_3) \cap \Psi_j} &= \abs{\Gamma_j(H_3)} - 3 = 3q-9.
    \end{align*}
    
    A similar structure occurs for $j = 16$ by observing that we can let the first linear factor of $H_{3,16}(r,s)$ as
    \begin{equation*}
        f_1(r,s) = (2s-1)r - \pr{s^2+\frac{u^2-1}{4}}    
    \end{equation*}
    and the quadratic factor can be expressed as a product of two linear polynomials in $\F_q[s] [r]$, which we denote by
    \begin{equation*}
        f_2(r,s) = ur - \pr{us+s^2-s-\frac{u^2-1}{4}}, f_3(r,s) =  ur - \pr{us-s^2+s+\frac{u^2-1}{4}}
    \end{equation*}
    where $u = \sqrt{(1-4d)(1-4e)} \in \F_q \setminus \{0\}$ since, by assumptions in Table \ref{tab:pencil_class_auto}, $1-4d$ and $1-4e$ are both non-squares, which means their product must be a non-zero square.
    
    In this case, $f_1(r,s)$ will give one pair for each $s \neq \frac{1}{2}$, giving us a total of $q-1$ pairs while the other factors will contribute $q$ pairs. For the pairwise contributions, we have $f_i(r,s) = f_j(r,s) = 0$ for $i \neq j$ if and only if $s \in \{\eta_{+},\eta_{-}\}$ where $\eta_{+} = \frac{1+u}{2}$ and $\eta_{-}=\frac{1-u}{2}$ are the elements in $\F_q$ that corresponds to singular conics in pencil $\Pen_{16}$ as shown in Table \ref{tab:pencil_sing_base}. We also get one contribution for the pair $\left(\infty,\frac{1}{2}\right)$ and no contribution for $s = \infty$. Altogether, we get the same number of valid pairs that are $4$-Poncelet pairs as in the case of $j = 3$.
    
    Lastly, we have $H_{3,16}(\eta_{+},s) = 0$ if and only if $s \in \left\{ \eta_{+}, \frac{1-3u}{2} \right\}$, and $H_{3,16}(\eta_{-},s) = 0$ if and only if $s \in \left\{ \eta_{-}, \frac{1+3u}{2} \right\}$ which gives us the 3 valid pairs, $\left(\eta_{+},\frac{1-3u}{2}\right), \left(\eta_{-},\frac{1+3u}{2}\right), \left(\infty,\frac{1}{2}\right) \in \Gamma_j(H_3)$, that are not smooth pairs which again implies that we have the same number of smooth pairs that are $4$-Poncelet pairs as in the case of $j = 3$.

\end{proof}

\begin{proposition}
    \label{prop:n4_case_14_19}
    For $j \in \{14,19\}$, $\abs{\Gamma_j(H_3)} = q$ and $\abs{\Gamma_j(H_3) \cap \Psi_j} = q-1$.
\end{proposition}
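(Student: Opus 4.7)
The plan is to mirror the structure used in the proof of Proposition \ref{prop:n4_case_3_16}, exploiting the factorization of $H_{3,j}(r,s)$ visible in Table \ref{tab:h3_summary}. For $j\in\{14,19\}$ one has $H_{3,j}(r,s)=f_1(r,s)\,f_2(r,s)$ where $f_1$ is linear in $r$ and $f_2$ is quadratic in $r$, with leading coefficient (in $r$) equal to $1-4e$ when $j=14$ and $\rho^2-4\nu\sigma^2$ when $j=19$; both are non-zero by the assumptions on Table \ref{tab:pencil_class_auto}, so $f_2$ is genuinely quadratic for every $s$. Define $A_i=\{(r,s)\in\Phi_j\mid f_i(r,s)=0\}$ and count $|A_1\cup A_2|$ by inclusion-exclusion, then add the contribution from $r=\infty$ and adjust using Lemma \ref{lem:h3_ss_0}. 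Recall that for these two pencils the unique singular element of the pencil is at $\infty$, so $s=\infty$ is excluded from $\Phi_j$, and there are no irregular pairs to remove.

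The linear factor is easy: for $j=14$, $f_1(r,s)=(2es-1)r+(1-es^2)$, and the unique value of $s$ making the coefficient of $r$ vanish is $s=\tfrac{1}{2e}$, at which the constant term evaluates to $\tfrac{4e-1}{4e}\neq 0$ (since $1-4e$ is a non-square, hence non-zero). Hence $f_1$ contributes exactly one value of $r$ for each of the $q-1$ remaining finite $s$. The analogous argument, with $2\nu s+\rho$ in place of $2es-1$, gives $q-1$ pairs in the case $j=19$.

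The main obstacle is the quadratic factor: I need to show $f_2$ has no root in $r$ for any $s\in\F_q$. The plan is to compute its discriminant in $r$ and verify that it takes the form $4(1-4e)\,\kappa_{14}(s)^2$ for $j=14$ and $4(\rho^2-4\nu\sigma^2)\,\kappa_{19}(s)^2$ for $j=19$, with $\kappa_j$ as in Table \ref{tab:h2_root_kappa_summary}. For $j=14$, expanding $(8es-2s)^2-4(1-4e)(-e^2 s^4+2es^3-6es^2+2s-1)$ and comparing with $(es^2-s+1)^2=e^2s^4-2es^3+(2e+1)s^2-2s+1$ should produce exactly this identity; the analogous check for $j=19$ is algebraically the same. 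Combining this with Lemma \ref{lem:kappa_0_sing}, which tells us $\kappa_j(s)\neq 0$ for every $s\in\F_q$, together with the non-squareness of $1-4e$ (respectively $\rho^2-4\nu\sigma^2$), shows that the discriminant of $f_2$ is a non-zero non-square for every $s\in\F_q$. Therefore $f_2$ contributes no pairs, and in particular $A_1\cap A_2=\emptyset$.

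It remains to assemble the count. The $r=\infty$ contribution is one pair, coming from $H_{3,14}(\infty,s)=2es-1=0$ at $s=\tfrac{1}{2e}$ (respectively $H_{3,19}(\infty,s)=2\nu s+\rho=0$ at $s=-\rho/(2\nu)$). Since no finite $s$ gives a singular conic in these pencils, Lemma \ref{lem:h3_ss_0} guarantees that no finite pair $(s,s)$ lies in $\Gamma_j(H_3)$, so no correction is needed. Summing gives
\begin{equation*}
|\Gamma_j(H_3)| \;=\; \underbrace{(q-1)}_{f_1,\ r,s\in\F_q} + \underbrace{0}_{f_2} + \underbrace{1}_{r=\infty}\;=\;q.
\end{equation*}
Finally, to pass to smooth pairs we remove those valid pairs with $r=\infty$; only the single pair $(\infty,\tfrac{1}{2e})$ (respectively $(\infty,-\rho/(2\nu))$) is lost, yielding $|\Gamma_j(H_3)\cap\Psi_j|=q-1$. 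The only step that is not entirely routine is the discriminant identity for $f_2$, which I expect to be the heart of the argument.
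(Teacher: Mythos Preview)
Your proposal is correct and follows essentially the same route as the paper: factor $H_{3,j}$ into a linear and a quadratic factor in $r$, show the quadratic factor never has a root because its discriminant is $4(1-4e)\kappa_{14}(s)^2$ (resp.\ $4(\rho^2-4\nu\sigma^2)\kappa_{19}(s)^2$), count the $q-1$ finite pairs from the linear factor, add the single $r=\infty$ contribution, and subtract the one non-smooth pair. The only cosmetic difference is that you cite Lemma~\ref{lem:kappa_0_sing} for the non-vanishing of $\kappa_j$, whereas the paper re-derives the irreducibility of $es^2-s+1$ (resp.\ $\nu s^2+\rho s+\sigma^2$) directly from the discriminant; these are the same argument.
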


\begin{proof}
    For this case, we have $H_3(r,s) = f_1(r,s)f_2(r,s)$ where $f_1(r,s)$ is linear while $f_2(r,s)$ is quadratic polynomial that does not have any roots for all $s \in \F_q$ and hence, $H_{3,j}(r,s) = 0$ if and only if the linear factor is $f_1(r,s) = 0$. 
    
    First, let us show that the linear factors are zero for $q-1$ pairs. For $j = 14$, the linear factor is $0$ if and only if $r = \frac{es^2 - 1}{2es-1}$ which gives us one pair $(r,s)$ for each $s \in \F_q \setminus \left\{\frac{1}{2e} \right\}$. Similarly, for $j = 19$, the linear factor is $0$ if and only if $r = \frac{\nu s^2 - \sigma^2}{2\nu s + \rho}$ which gives us one pair $(r,s)$ for each $s \in \F_q \setminus \left\{\frac{-\rho}{2\nu} \right\}$. 
    
    Going to the quadratic factor, for $j=14$, $D(f_2(r,s)) = 4(es^2-s+1)^2(1-4e)$. By the assumption in Table \ref{tab:pencil_class_auto} that $T^2 + T + e$ is irreducible, we have that its discriminant $1-4e$ is non-square. Since the polynomial $es^2-s+1$ shares the same discriminant, it is also irreducible and will not have any roots in $\F_q$. Hence, the factor $4(es^2-s+1)^2$ is always a non-zero square and $D(f_2(r,s))$ is always a non-square for any $s \in \F_q$. Similarly, for $j=19$, $D(f_2(r,s)) = 4(\nu s^2 + \rho s+ \sigma^2)^2(\rho^2-4\nu\sigma^2)$ and using the assumptions for $\nu, \rho$ and $\sigma$ in Table \ref{tab:pencil_class_auto}, we arrive with the same conclusion as in the case of $j=14$.
    
    Finally, the coefficient of $r$ in the linear factor turns out to be $H_{3,j}(\infty,s)$ which gives us the valid pairs $\left(\infty,\frac{1}{2e}\right)$ for $j = 14$ and $\left(\infty,\frac{-\rho}{2\nu}\right)$ for $j = 19$. Adding all these contributions, we obtain
    \begin{align*}
        \abs{\Gamma_j(H_2)} &= \abs{\Gamma_j(H_2(\cdot,\cdot))} + \underbrace{\abs{\Gamma_j(H_2(\cdot,\infty))}}_{=0} + \underbrace{\abs{\Gamma_j(H_2(\infty,\cdot))}}_{=1} \\
        &= (q-1) + 1  = q.
    \end{align*}
    
    Since the valid pairs in $\Gamma_j(H_3)$ that are not smooth pairs are just those in $\Gamma_j(H_2(\infty,\cdot))$
    \begin{align*}
        \abs{\Gamma_j(H_3) \cap \Psi_j} &= \abs{\Gamma_j(H_3)} - 1 = q-1.
    \end{align*}
\end{proof}

For the case of pencil $\Pen_{18}$, $H_3$ from Cayley's condition becomes unwieldy, and we instead opt for a geometric argument to justify that we cannot construct a Poncelet tetragon for any pair in this pencil. First, we need this lemma relating base points and a Poncelet polygon with overlapping vertices.

\begin{lemma}
    \label{lem:poly_degen}
    Let $P_k$ be the $k$th vertex of a Poncelet $n$-gon inscribed in $\overline{\A}$ and circumscribed about $\overline{\B}$ with $P_0 = P_{n}$ and $P_1 = P_{n+1}$. Then, $P_k \in \overline{\A} \cap \overline{\B}$ if and only if $P_{k-1} = P_{k+1}$. Moreover, $P_k \in \overline{\A} \cap \overline{\B}$ is an intersection of multiplicity greater than 1 if and only if $P_{k-1} = P_{k} = P_{k+1}$.
\end{lemma}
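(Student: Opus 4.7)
The proof should hinge on the fact that the two edges $P_{k-1}P_k$ and $P_kP_{k+1}$ meeting at the vertex $P_k$ are, by the Poncelet construction, the pair of tangent lines from $P_k$ to the smooth conic $\overline{\B}$. Over $\Prj^2(\overline{\F_q})$, a point of the plane admits two distinct tangents to a smooth conic if and only if it does not lie on the conic, and exactly one tangent (the tangent line at that point) if and only if it lies on the conic; moreover, two conics meet at a common point with multiplicity greater than $1$ if and only if they share a tangent line there. These two facts will drive the entire argument.

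For the forward direction, I would suppose $P_k \in \overline{\A}\cap\overline{\B}$. Then the unique tangent through $P_k$ to $\overline{\B}$ is the tangent $\ell$ at $P_k$, and both edges at $P_k$ must coincide with $\ell$, so $P_{k-1},P_{k+1} \in \ell \cap \overline{\A}$. Since $\ell$ meets the conic $\overline{\A}$ with total multiplicity $2$ and we already have $P_k \in \ell \cap \overline{\A}$, there are two sub-cases. If $\ell$ is not tangent to $\overline{\A}$ at $P_k$, then $\ell \cap \overline{\A}=\{P_k,P'\}$ with $P'\neq P_k$, which forces $P_{k-1}=P_{k+1}=P'$; if $\ell$ is tangent to $\overline{\A}$ at $P_k$, then $\ell \cap \overline{\A}=\{P_k\}$ with multiplicity $2$, which forces $P_{k-1}=P_{k+1}=P_k$. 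Since tangency of $\ell$ with $\overline{\A}$ at $P_k$ is exactly the condition for the intersection multiplicity of $\overline{\A}$ and $\overline{\B}$ at $P_k$ to exceed $1$, both biconditionals in the forward direction would be established simultaneously.

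For the converse, I would assume $P_{k-1}=P_{k+1}$. If $P_{k-1}\neq P_k$, the two edges $P_{k-1}P_k$ and $P_kP_{k+1}$ share the two distinct points $P_k$ and $P_{k-1}$ and are therefore the same line; hence $P_k$ admits a unique tangent to $\overline{\B}$, forcing $P_k\in\overline{\B}$, and the common line meets $\overline{\A}$ at two distinct points, so the intersection multiplicity at $P_k$ equals $1$. If instead $P_{k-1}=P_k=P_{k+1}$, I would interpret the collapsed edges as the common limiting tangent to both conics at $P_k$, equivalently as the fixed-point condition for the tangent-drawing involution on $\overline{\A}$ associated with $\overline{\B}$, yielding both $P_k\in\overline{\A}\cap\overline{\B}$ and a common tangent line there, hence intersection multiplicity greater than $1$.

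The main obstacle I anticipate is the rigorous treatment of the totally degenerate configuration $P_{k-1}=P_k=P_{k+1}$, where the literal edges of the polygon collapse to a point. I would address this by reformulating the construction in terms of the two involutions on $\overline{\A}$ obtained by following each tangent from a point of $\overline{\A}$ to $\overline{\B}$ back to $\overline{\A}$: the configuration $P_{k-1}=P_k=P_{k+1}$ then corresponds precisely to $P_k$ being a fixed point of one of these involutions, which over $\overline{\F_q}$ is the intrinsic algebraic condition that the tangent to $\overline{\B}$ at $P_k$ is also tangent to $\overline{\A}$ at $P_k$.
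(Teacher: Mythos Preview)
Your proposal is correct and follows essentially the same approach as the paper: both arguments rest on the fact that the two edges at $P_k$ are the two tangent lines from $P_k$ to the smooth conic $\overline{\B}$, so that $P_k\in\overline{\B}$ is equivalent to these tangents coinciding (the paper phrases this via the polar line, you via the tangent-line count), and both identify intersection multiplicity $>1$ with the condition that this common tangent is also tangent to $\overline{\A}$ at $P_k$. Your treatment is somewhat more explicit in separating the sub-cases $P_{k-1}=P_{k+1}\neq P_k$ and $P_{k-1}=P_k=P_{k+1}$, whereas the paper's proof is terser, but the underlying geometry is identical.
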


\begin{proof}
Let $\tau_k$ be the tangent line to $\B$ passing through $P_k$ and $P_{k+1}$. $P_k \in \overline{\A}$ since it is a vertex of our Poncelet polygon. Notice that $P_k \in \overline{\B}$ if and only if the polar line to $\B$ with respect to $P_k$ coincides with the tangent line to $\overline{\B}$ at $P_k$. This is equivalent to having the $\tau_k = \tau_{k-1}$, which happens if and only if $P_{k-1} = P_{k+1}$.

Now, in the case where $P_{k-1} = P_{k} = P_{k+1}$, $P_k = P_{k+1}$ would imply that $\tau_k$ intersects $\A$ at a point of multiplicity 2. Hence, $\tau_k$ is a common tangent of $\A$ and $\B$ at $P_k$, which will only happen if $P_k \in \overline{\A} \cap \overline{\B}$ is an intersection of multiplicity 2 or above. Similar argument holds for $P_{k-1} = P_{k}$.
\end{proof}

\begin{proposition}
    \label{prop:n4_case_18}
    $\abs{\Gamma_{18}(H_3)} = 0$.
\end{proposition}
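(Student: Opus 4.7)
My plan is to argue by contradiction, exploiting the Galois structure of $\Pen_{18}$: the three singular conics form a single Galois orbit of size three over $\F_q$, because $\det C_{18}(\eta)$ equals, up to a nonzero scalar, the irreducible cubic $T^3 + bT^2 + cT + 1$. The strategy is to attach to any hypothetical $4$-Poncelet pair a point of $\Prb^2(\F_q)$ that has to coincide with one of the three singular points of these singular conics; since those three points also form a Galois orbit of size three and none of them is $\F_q$-rational, this yields the desired contradiction.

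Suppose then, for contradiction, that $(r,s) \in \Phi_{18}$ satisfies $H_{3,18}(r,s) = 0$, so $(\A,\B) = (C_{18}(r), C_{18}(s))$ is a $4$-Poncelet pair, and fix a Poncelet tetragon $P_1 P_2 P_3 P_4$ in $\Prb^2(\overline{\F_q})$. I would first dispose of degenerate tetragons: if for instance $P_1 = P_3$, then Lemma \ref{lem:poly_degen} forces $P_2$ and $P_4$ to be base points of the pencil. For $\Pen_{18}$ the four base points are forced by the $3$-cycle action on the three singular conics to split as one $\F_q$-rational point together with three conjugates in $\F_{q^3}$, and a short case analysis on which two base points can serve as $P_2, P_4$ rules this out.

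For the non-degenerate case, the key geometric input is that the Poncelet map $\pi$ on $\overline{\A}$ has order four, so $\pi^2$ is a non-trivial involution of the smooth conic $\overline{\A}$. Any involution of a smooth conic has a unique centre, a point $M$ off the conic such that $P$, $\pi^2(P)$, $M$ are collinear for every $P \in \overline{\A}$. Applying this to $P_1$ and $P_2$ shows that both diagonals $P_1 P_3$ and $P_2 P_4$ pass through $M$, so $M = P_1 P_3 \cap P_2 P_4$ is intrinsic to $(\A, \B)$ and does not depend on the tetragon. Because $(\A,\B)$, and hence $\pi^2$ and its centre, are defined over $\F_q$, we must have $M \in \Prb^2(\F_q)$. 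Finally, $\pi^2$ has to coincide with one of three involutions of $\overline{\A}$ canonically attached to the pencil: each singular conic $\ell \cup \ell'$ induces an involution of $\overline{\A}$ by projection from its singular point $\ell \cap \ell'$, and in the elliptic curve description of the Poncelet correspondence $\pi^2$ is translation by a nontrivial $2$-torsion element corresponding to one of the three singular conics. Hence $M$ is a vertex of the common self-polar triangle of $(\A,\B)$; but these three vertices form a single Galois orbit of size three for $\Pen_{18}$, and none is $\F_q$-rational, contradicting $M \in \Prb^2(\F_q)$.

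The main obstacle I anticipate is making the identification of $\pi^2$ with a pencil-canonical involution fully rigorous without invoking the elliptic-curve machinery in detail, since the rest of the paper works only with the algebraic Cayley criterion. A direct projective-geometric proof that the centre of $\pi^2$ is the singular point of a singular conic of $\Pen_{18}$ — for instance, via the harmonic conjugate structure relating the tetragon's diagonal intersection to the diagonal triangle of the four base points — is the step I expect to have to develop carefully.
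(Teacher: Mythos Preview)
Your strategy is sound and can be completed, but it misses the simplification the paper exploits. Since Poncelet's theorem lets you choose the starting vertex freely, the paper picks $P_1=[0:0:1]$, the unique $\F_q$-rational base point of $\Pen_{18}$; Lemma~\ref{lem:poly_degen} applied at $P_1$ then gives $P_0=P_2$, i.e.\ $P_2=P_4$, so the tetragon is automatically of your ``degenerate'' type and the involution/elliptic-curve analysis of the non-degenerate case is never needed. From there the paper runs essentially your degenerate-case sketch: $P_2=P_4$ forces $P_3\in\overline{\A}\cap\overline{\B}$; the three non-rational base points lie in $\F_{q^3}$, whereas $P_3$ is reached from the $\F_q$-point $P_1$ through intersections of $\F_q$-lines and $\F_q$-conics, so $P_3=P_1$. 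Iterating gives $P_1=P_2=P_3$, contradicting the multiplicity-$1$ clause of Lemma~\ref{lem:poly_degen}.

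What your approach buys is a more structural picture: the diagonal point $M$ of any Poncelet tetragon is a vertex of the common self-polar triangle, and for $\Pen_{18}$ those three vertices form a Galois orbit of size three with no $\F_q$-point. Your acknowledged obstacle can in fact be handled without the elliptic-curve description: the projection $\iota_M$ from $M$ swaps $P_1\leftrightarrow P_3$ and $P_2\leftrightarrow P_4$ for \emph{every} Poncelet tetragon, hence sends each side $P_iP_{i+1}$ to another side, so it maps tangent lines of $\B$ to tangent lines of $\B$; thus $\iota_M$ preserves both $\A$ and $\B$, and a nontrivial projective involution preserving two conics in general position must be centred at a vertex of their common self-polar triangle. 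This gives an independent proof, but it is substantially more machinery than the paper needs, and the choose-$P_1$-as-a-base-point trick bypasses it entirely.
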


\begin{proof}
    Suppose $(\A,\B)$ is a $4$-Poncelet pair in pencil $\Pen_{18}$. By Poncelet's theorem, we can choose the starting vertex $P_1 = [0:0:1] \in \A \cap \B$ and by Lemma \ref{lem:poly_degen}, we have $P_2 = P_4$ and $P_3 \in \overline{\A} \cap \overline{\B}$. However, Table \ref{tab:pencil_sing_base} implies that $P_1$ is the only base point on  $\Prj^2(\F_q)$ while, according to the irreducibility assumption of $T^3 + bT^2 + cT + 1$ in Table \ref{tab:pencil_class_auto}, the homogeneous coordinates of the remaining base points cannot be obtained from a solution of a quadratic equation in $\F_q[T]$. 
    Thus, the remaining base points cannot be obtained as an intersection of any line and conic in $\Prj^2(\F_q)$, forcing $P_3 = P_1$. Applying Lemma \ref{lem:poly_degen} and the previous argument, we conclude that $P_2$ is another base point on the base plane. Hence, we have $P_1 = P_2 = P_3$, which contradicts Lemma \ref{lem:poly_degen} since $P_2$ is a base point with intersection multiplicity 1.
\end{proof}

\begin{proposition}
    \label{prop:n4_case_4_6_15_17}
    For $j \in \{4,6,15,17\}$, $\abs{\Gamma_j(H_3)} = \abs{\Gamma_j(H_3) \cap \Psi_j} = q-1$.
\end{proposition}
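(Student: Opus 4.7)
The plan is to mirror the argument used for Proposition \ref{prop:n3_char_n3_case_4_6_15_17} in the triangle case, since $H_{3,j}$ has an equally simple structure for these pencils. For $j\in\{4,6,15\}$ the form $H_{3,j}(r,s)=2r-s$ is linear in both variables, and for $j=17$ the form $H_{3,17}(r,s)=r^2(r-2s)$ factors explicitly.

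For $j\in\{4,6,15\}$: Since char $\F_q\neq 2$, I would parametrize by $s\in\F_q$ and solve uniquely for $r=s/2\in\F_q$. I would exclude $s=0$ because it forces $r=s$ (violating distinctness of the conic pair), and note that for every remaining $s\in\F_q\setminus\{0\}$ the pair $(s/2,s)$ has first coordinate in $\F_q\setminus\{0\}$, so it is not of the irregular form $(\infty,\cdot)$, nor, for $j=6$, of the irregular form $(0,\cdot)$. Since $H_{3,j}(\infty,s)=0$ identically, the only potential contributions from $r=\infty$ are exactly the irregular pairs and therefore add nothing to the count. This gives $\abs{\Gamma_j(H_3)}=q-1$.

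For $j=17$: The factor $r^2$ forces either $r=0$ or $r=2s$. The branch $r=0$ yields pairs $(0,s)$ that are irregular for $s\in\F_q\setminus\{0\}$ and coincide with a non-distinct pair at $s=0$, contributing nothing. The branch $r=2s$ gives one pair $(2s,s)$ per $s\in\F_q$; excluding $s=0$ by distinctness leaves $q-1$ pairs with $2s\in\F_q\setminus\{0\}$, each of which avoids the irregular form $(0,\cdot)$. Because $H_{3,17}(\infty,s)=u\neq 0$, no pair with $r=\infty$ contributes.

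For the refined count $\abs{\Gamma_j(H_3)\cap\Psi_j}$, I would read off Table \ref{tab:pencil_sing_base} that for $j\in\{4,6,15,17\}$ the singular indices of $\Pen_j$ lie in $\{0,\infty\}$. Since in each case above the surviving $r$ belongs to $\F_q\setminus\{0\}$, the conic $C_j(r)$ is automatically non-singular, so every valid pair already lies in $\Psi_j$; hence $\abs{\Gamma_j(H_3)\cap\Psi_j}=\abs{\Gamma_j(H_3)}=q-1$.

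The only delicate step is the bookkeeping that separates genuinely irregular pairs from pairs that would fail the valid-pair condition $\det C_j(s)\neq 0$; this is slightly more involved for $j=6$, which has two distinct singular indices, and for $j=17$, where the roles of $r$ and $s$ are swapped in both the irregular pair definition and the factorization of $H_{3,17}$. Beyond that, the argument is a direct linear-factor count.
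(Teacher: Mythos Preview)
Your argument is correct and follows essentially the same approach as the paper: solve the linear factor of $H_{3,j}$ for $r$ in terms of $s$, exclude $s=0$, and dismiss the $r=\infty$ (respectively $r=0$ for $j=17$) contributions as irregular. You are in fact more thorough than the paper's own proof, which does not spell out why $\abs{\Gamma_j(H_3)\cap\Psi_j}=\abs{\Gamma_j(H_3)}$; your observation that the surviving $r$ always lies in $\F_q\setminus\{0\}$, hence avoids every singular index, is exactly the missing justification. One small imprecision: for $j\in\{4,6,15\}$ the pair $(\infty,0)$ is not technically irregular, but it is still excluded since $\det C_j(0)=0$; likewise your exclusion of $s=0$ by distinctness could equally be justified by the requirement $\det C_j(s)\neq 0$. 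Neither point affects the count.
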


\begin{proof}
    All of these share the same form of $H_{3,j}(r,s)$, which is degree 1 in both $r$ and $s$ with the roles of $r$ and $s$ reversed for $j=17$. Choosing $s \in \F_q \setminus \{0\}$, $H_{3,j}(r,s) = 0$ if and only if $r = \frac{s}{2}$ for $j \neq 17$ and $r = 2s$ for $j = 17$. Thus, there is only one value of $r$ for each $s$, giving us a total of $q-1$ pairs. We will not count the pairs of the form $(\infty,s)$ for $j \neq 17$ and $(0,s)$ for $j=17$ since they correspond to the irregular pairs.
\end{proof}

\begin{proposition}
    \label{prop:n4_case_5_8}
    For $j \in \{5,8\}$, $\abs{\Gamma_j(H_3)} = 0$.
\end{proposition}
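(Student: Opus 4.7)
The plan is to mimic the short argument used for Proposition \ref{prop:n3_char_n3_case_5_8}, since the structure of Table \ref{tab:h3_summary} for $j \in \{5,8\}$ is essentially identical to that of Table \ref{tab:h2_summary}: the polynomial $H_{3,j}(r,s)$ equals a non-zero constant $u$ whenever $r \in \F_q$, and the only way to make $H_{3,j}$ vanish is to take $r = \infty$.

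First I would split $\Gamma_j(H_3)$ into the three standard pieces $\Gamma_j(H_3(\cdot,\cdot))$, $\Gamma_j(H_3(\cdot,\infty))$, and $\Gamma_j(H_3(\infty,\cdot))$. For $r,s \in \F_q$, Table \ref{tab:h3_summary} gives $H_{3,j}(r,s) = u$, which is non-zero since $\mathrm{char}\,\F_q \neq 2$; so the first piece is empty. The piece $\Gamma_j(H_3(\cdot,\infty))$ is empty because in both pencils $\Pen_5$ and $\Pen_8$, $s=\infty$ corresponds to the only singular conic (see Table \ref{tab:pencil_sing_base}), and valid pairs require $\det C_j(s) \neq 0$.

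Next I would handle the pairs of the form $(\infty, s)$ with $s \in \F_q$: although $H_{3,j}(\infty,s) = 0$ according to Table \ref{tab:h3_summary}, the definition of irregular pairs excludes every such pair in both pencils under consideration, namely via Case 1 for $\Pen_8$ and Case 2 for $\Pen_5$. So $\Gamma_j(H_3(\infty,\cdot)) = \emptyset$ as well, and combining the three empty pieces yields $|\Gamma_j(H_3)| = 0$. There is no real obstacle here; the only thing to be careful about is not accidentally counting the $(\infty,s)$ pairs that satisfy $H_{3,j}(\infty,s)=0$, which is precisely why the irregular-pair exclusion matters.
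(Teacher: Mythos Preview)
Your proof is correct and takes essentially the same approach as the paper's (very terse) argument: valid pairs with $r \in \F_q$ have $H_{3,j}(r,s)$ equal to a non-zero constant, and the remaining pairs $(\infty,s)$ are irregular. Your version is simply more explicit about the three-piece decomposition and about which case of the irregular-pair definition applies to each pencil.
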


\begin{proof}
    All valid pairs $(r,s)$ will have $H_{2,j}(r,s) \neq 0$. Pairs of the form $(\infty,s)$ correspond to irregular pairs, which we do not count.
\end{proof}

Similar to the triangle case, we summarize all the counts for the tetragon case in Theorem \ref{thm:n4_counts}.

\begin{theorem}
\label{thm:n4_counts}

The number of pairs corresponding to $4$-Poncelet pairs among the valid pairs and smooth pairs, for each pencil, is summarized in Table \ref{tab:n4_count}.

\begin{table}[!htb]
\centering
\renewcommand{\arraystretch}{2}%
\caption{Count of $4$-Poncelet pairs for each pencil in $\Prj^2(\F_q)$}
\begin{tabular}{l|c|c}
\hline
$\Pen_j$  & Smooth Pairs & Valid Pairs\\
\hline
$\Pen_{3}, \Pen_{16}$         & $3(q-3)$ &  $3(q-2)$  \\
\hline
$\Pen_{4}, \Pen_{6}, \Pen_{15}, \Pen_{17}$      &   $q-1$           &  $q-1$           \\
\hline
$\Pen_{5}, \Pen_{8}$     &  $0$            &    $0$         \\
\hline
$\Pen_{14}, \Pen_{19}$      &  $q-1$            &  $q$           \\
\hline
$\Pen_{18}$        &   $0$           &    $0$  \\
\hline
\end{tabular}
\label{tab:n4_count}
\end{table}

\end{theorem}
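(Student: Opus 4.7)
The plan is to assemble Table \ref{tab:n4_count} directly from Propositions \ref{prop:n4_case_3_16}, \ref{prop:n4_case_14_19}, \ref{prop:n4_case_18}, \ref{prop:n4_case_4_6_15_17}, and \ref{prop:n4_case_5_8}. This theorem plays the same bookkeeping role for the tetragon case as Theorem \ref{thm:n3_counts} did for the triangle case: the real content of the tetragon analysis has already been carried out pencil by pencil, and what remains is to check that the five propositions exhaust the eleven pencils under consideration and that the counts claimed there translate correctly into the table entries.

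First I would verify coverage and disjointness of the case split, namely
\begin{equation*}
\{3,16\} \cup \{14,19\} \cup \{18\} \cup \{4,6,15,17\} \cup \{5,8\} = \{3,4,5,6,8,14,15,16,17,18,19\},
\end{equation*}
with the five index sets pairwise disjoint, so every pencil in the scope of this paper is accounted for exactly once. Then I would read off each row. For $j \in \{3,16\}$, Proposition \ref{prop:n4_case_3_16} yields $\abs{\Gamma_j(H_3)} = 3q-6 = 3(q-2)$ valid pairs and $\abs{\Gamma_j(H_3) \cap \Psi_j} = 3q-9 = 3(q-3)$ smooth pairs. For $j \in \{4,6,15,17\}$, Proposition \ref{prop:n4_case_4_6_15_17} gives $q-1$ in both columns. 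Proposition \ref{prop:n4_case_14_19} covers $j \in \{14,19\}$ with $q$ valid and $q-1$ smooth pairs.

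For the two vanishing rows, Proposition \ref{prop:n4_case_5_8} gives $\abs{\Gamma_j(H_3)} = 0$ for $j \in \{5,8\}$ and Proposition \ref{prop:n4_case_18} gives the same for $j = 18$; in both cases the inclusion $\Gamma_j(H_3) \cap \Psi_j \subseteq \Gamma_j(H_3)$ forces the smooth count to vanish as well. Since every entry of Table \ref{tab:n4_count} is now accounted for, the proof is complete.

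There is no genuine mathematical obstacle at this step, because the theorem is a summary; the only thing to guard against is a clerical slip in matching pencils to propositions or in simplifying $3q-6$ and $3q-9$. I would therefore keep the written proof to a single sentence citing the five propositions above.
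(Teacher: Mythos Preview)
Your proposal is correct and matches the paper's own proof, which is literally the one-line ``See Propositions \ref{prop:n4_case_3_16}, \ref{prop:n4_case_14_19}, \ref{prop:n4_case_18}, \ref{prop:n4_case_4_6_15_17}, and \ref{prop:n4_case_5_8}.'' Your verification of coverage, disjointness, and the arithmetic $3q-6 = 3(q-2)$, $3q-9 = 3(q-3)$ is more explicit than what the paper provides, but the approach is identical.
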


\begin{proof}
    See Propositions \ref{prop:n4_case_3_16}, \ref{prop:n4_case_14_19}, \ref{prop:n4_case_18}, \ref{prop:n4_case_4_6_15_17}, and \ref{prop:n4_case_5_8}.
\end{proof}

Using Theorem \ref{thm:n4_counts} and following the conventions stated at the beginning of Section \ref{sec:prob}, we also get the probabilities for the tetragon case for each pencil in Corollary \ref{cor:n4_prob}.

\begin{corollary}
    \label{cor:n4_prob}
    The probabilities of obtaining a valid pair or a smooth pair corresponding to a $4$\nobreakdash-Poncelet pair for each pencil $\Pen_j$ under our assumed probability space are summarized in Table \ref{tab:n4_prob}.

    \begin{table}[!htb]
    \centering
    \renewcommand{\arraystretch}{2}%
    \caption{Probabilities of obtaining a $4$-Poncelet pair for each pencil in $\Prj^2(\F_q)$}
    \begin{tabular}{l|c|c}
    \hline
    $\Pen_j$ & Smooth Pairs & Valid Pairs \\
    \hline
    $\Pen_{3}, \Pen_{16}$       &  $\frac{3}{q-2}$ for $q>3^{\dag}$ & $\frac{3}{q}$  \\
    \hline
    $\Pen_{4}, \Pen_{15}$      &   $\frac{1}{q-2}$           &  $\frac{1}{q-1}$           \\
    \hline
    $\Pen_{5}, \Pen_{8}$       &  $0$            &    $0$         \\
    \hline
    $\Pen_{6}, \Pen_{17}$      &   $\frac{1}{q-2}$           &  $\frac{1}{q-2}$           \\
    \hline
    $\Pen_{14}, \Pen_{19}$     &  $\frac{1}{q}$            &  $\frac{1}{q}$           \\
    \hline
    $\Pen_{18}$        &   $0$           &    $0$  \\
    \hline
    \end{tabular}
    \label{tab:n4_prob}
    \flushleft
\footnotesize{${}^{\dag}$: The sample space is empty for $q=3$}
    \end{table}
    
\end{corollary}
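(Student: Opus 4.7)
The plan is to combine Theorem \ref{thm:n4_counts} with the denominators $\abs{\Phi_j}$ and $\abs{\Psi_j}$ of the two sample spaces, using the ratio formulas
\begin{equation*}
    \Prb(\Gamma_j(H_3)|\Phi_j) = \dfrac{\abs{\Gamma_j(H_3)}}{\abs{\Phi_j}}, \qquad \Prb(\Gamma_j(H_3)|\Psi_j) = \dfrac{\abs{\Gamma_j(H_3) \cap \Psi_j}}{\abs{\Psi_j}},
\end{equation*}
in exact analogy with the derivation of Corollary \ref{cor:n3_prob}. The numerators are supplied directly by Theorem \ref{thm:n4_counts}, so the work reduces to computing $\abs{\Phi_j}$ and $\abs{\Psi_j}$ for each of the eleven pencils. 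These denominators depend only on the underlying pencil and not on whether one is counting triangles or tetragons; in particular, they coincide with those implicit in Corollary \ref{cor:n3_prob}.

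I would start from $\abs{\Prj^1(\F_q) \times \Prj^1(\F_q)} = (q+1)^2$ and successively subtract the diagonal pairs $(r,r)$ (distinct conics), the pairs with $\det C_j(s) = 0$ (violating validity on $\B$), and the irregular pairs of Definition 3.1. Writing $k_j$ for the number of singular indices in $\Prj^1(\F_q)$ from Table \ref{tab:pencil_sing_base} and $i_j$ for the count of irregular pairs, a brief inclusion--exclusion (using that every irregular pair has $s$ non-singular and $r$ singular, so it is automatically off-diagonal and counted above) yields $\abs{\Phi_j} = q(q+1-k_j) - i_j$. For $\abs{\Psi_j}$ I would further remove the pairs in $\Phi_j$ with $\det C_j(r) = 0$; for the pencils $\Pen_5, \Pen_6, \Pen_8, \Pen_{17}$ every such pair is already irregular, so this additional removal is vacuous, which is precisely what makes the \emph{Smooth} and \emph{Valid} columns of Table \ref{tab:n4_prob} agree on those four rows.

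Once $\abs{\Phi_j}$ and $\abs{\Psi_j}$ are tabulated pencil by pencil (the values fall into six structural classes, matching the row-groupings of Table \ref{tab:n4_prob}), dividing the counts from Theorem \ref{thm:n4_counts} by these denominators recovers every entry of the corollary. The one genuinely delicate situation is $q=3$ under the smooth-pair restriction for $\Pen_3, \Pen_{16}$: both the numerator $3(q-3)$ and the denominator $(q-2)(q-3)$ vanish, and the bespoke entry ``$0$ for $q=3$'' in Table \ref{tab:n4_prob} records this $0/0$ degeneracy, which must be declared separately since the sample space itself is empty.

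The main obstacle is not conceptual but organizational: one must keep the irregular-pair exclusion and the $\det C_j(r)=0$ exclusion consistent so that pairs are neither missed nor double-counted, and one must accurately match each pencil with its values of $k_j$ and $i_j$ from Definition 3.1 and Table \ref{tab:pencil_sing_base}. Once that bookkeeping is in place, every row of Table \ref{tab:n4_prob} follows immediately from the corresponding row of Table \ref{tab:n4_count}.
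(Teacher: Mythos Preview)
Your proposal is correct and follows exactly the route the paper takes: the paper does not give a standalone proof of this corollary but simply states that it follows from Theorem~\ref{thm:n4_counts} by dividing those counts by $\abs{\Phi_j}$ and $\abs{\Psi_j}$, the same denominators already implicit in Corollary~\ref{cor:n3_prob}. Your explicit tabulation of $\abs{\Phi_j}=q(q+1-k_j)-i_j$ and the observation that the irregular exclusions exhaust all singular-$r$ pairs for $\Pen_5,\Pen_6,\Pen_8,\Pen_{17}$ are more detailed than anything the paper spells out, but the approach is identical.
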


\begin{remark}
    The computed probabilities of obtaining a $4$-Poncelet pair for pencils with transversally intersecting elements are consistent with the asymptotic results in \cite{wan2025pnc}.
\end{remark}

\clearpage

\section{Final remarks}
\label{sec:conclusion}

A natural direction for further research is to compute the probabilities of obtaining an $n$-Poncelet pair in a fixed pencil in $\Prj^2(\F_q)$ for $n > 4$. This direction can be thought of as fixing the number of sides $n$ and studying how the probability varies with the order $q$. 

It will also be interesting to ask a symmetric question on how the number of sides $n$ is distributed for a fixed order $q$, which has been studied in \cite{hunkus2020poncelet} for smooth pairs in pencil $\Pen_{17}$.
We performed a computational experiment in Python with the aid of Galois package \cite{hos2020galpack} for finite field computations where, for every valid pair in $\Prj^2(\F_q)$ with $q \leq 19$, we computed $n$, the number of sides of the Poncelet polygon that can be constructed from that pair. 
Based on a pattern involving non-smooth valid pairs that intersect non-transversally, we state the following:

\begin{conjecture}
    For pencils $\Pen_{4}$ and $\Pen_{15}$ in $\Prj^2(\F_q)$ and $k = 2 \, \chr \F_q$, the pair $(0,s)$, where $s \in \F_q\setminus\{0\}$, corresponds to a $k$-Poncelet pair.
\end{conjecture}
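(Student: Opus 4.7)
The plan is to combine an explicit Cayley-series computation with a geometric ``Poncelet curve'' picture. First I would retrieve the matrix forms $A_j, B_j$ from Dickson's classification for $j \in \{4, 15\}$, and observe that the pair $(0, s)$ corresponds to $(\B_j,\, s\A_j + \B_j)$, where $\B_j$ is smooth and $\A_j$ is a union of two lines, one tangent to $\B_j$. Because of this tangency, the cubic $\det((t+1)\B_j + s\A_j)$ should factor with a repeated linear factor over $\overline{\F_q}$, so that $\sqrt{\det(tA + B)}$ reduces to an explicit polynomial factor times the square root of a single linear polynomial in $t$. This yields closed-form expressions for every coefficient $H_k$ as an $\F_q$-linear combination of $\binom{1/2}{k}$ and $\binom{1/2}{k-1}$, with coefficients depending on $s$.

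With these explicit $H_k$ in hand, I would invoke the extended Cayley criterion: the pair is an $n$-Poncelet pair iff a specific Hankel-type determinant in the $H_k$ vanishes. The critical ingredient is the characteristic-$p$ identity $\binom{1/2}{k} \equiv \binom{(p+1)/2}{k} \pmod{p}$, which truncates the binomial series and induces a Frobenius-like periodicity in the $H_k$ sequence. I would then verify that the Hankel determinant vanishes at $n = 2p$, and check non-vanishing at each smaller $n$. For $n = 3$ and $n = 4$ the required non-vanishing is immediate from Tables \ref{tab:h2_summary} and \ref{tab:h3_summary}, since $H_{2,j}(0, s) = H_{3,j}(0, s) = -s \ne 0$ whenever $s \ne 0$.

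A complementary geometric argument should make the value $2p$ conceptually transparent. Poncelet dynamics on a smooth pair of conics amounts to translation by a fixed point $\tau$ on an elliptic ``Poncelet curve'', and the particular degenerate intersection pattern of $\Pen_4$ and $\Pen_{15}$ should force this curve to become cuspidal rather than nodal. The smooth locus of a cuspidal cubic is the additive group $\mathbb{G}_a$, in which every nonzero element has additive order exactly $p = $ char $\F_q$. Composing with the orientation-reversing involution implicit in the Poncelet construction then doubles the period to $2p$, uniformly in $s \ne 0$.

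The principal obstacle is ruling out the intermediate $n \in \{5, 6, \ldots, 2p-1\}$ with an argument uniform in $p$. The cleanest resolution would be to show that the Hankel determinants built from shifted half-integer binomials have rank exactly $p - 1$ modulo $p$, which would simultaneously yield vanishing at $n = 2p$ and non-vanishing for all smaller $n$. Cross-checking against the cuspidal-curve picture, and verifying the conjecture computationally for $p = 3, 5, 7$, would confirm that the analytic and geometric routes are consistent before attempting the general inductive step.
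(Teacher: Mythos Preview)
The paper does not prove this statement: it is explicitly presented as an open conjecture in the concluding section, supported only by computational evidence for $q \le 19$. There is therefore no ``paper's proof'' to compare your proposal against; you are sketching an attack on a genuinely open problem.

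As a plan, your proposal has one correct and genuinely useful observation: for $(r,s) = (0,s)$ in $\Pen_4$ (and similarly in $\Pen_{15}$) one computes $\det(tA + B) = -\tfrac{s^2}{4}(t+1)$, a \emph{linear} polynomial in $t$, so that every $H_k$ is a single nonzero constant times $\binom{1/2}{k}$. This reduces the full Cayley--Hankel criterion to a statement about Hankel determinants in the sequence $\binom{1/2}{k} \bmod p$, which is a clean combinatorial target. However, several of your supporting claims are inaccurate and would derail the argument as written:
\begin{itemize}
\item The generator $\B_j$ is \emph{not} smooth for $j \in \{4,15\}$: Table~\ref{tab:pencil_sing_base} lists $\eta = 0$ among the singular values, so $C_j(0) = B_j$ is a pair of lines. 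In the pair $(0,s)$ the inscribed conic is the singular one and $C_j(s)$ is the smooth circumscribed conic; your tangency description is accordingly off.
\item The identity $\binom{1/2}{k} \equiv \binom{(p+1)/2}{k} \pmod p$ fails in the sense you need: the series $(1+t)^{1/2}$ in $\F_p[[t]]$ does \emph{not} truncate to a polynomial of degree $(p+1)/2$. For example $\binom{1/2}{3} = \tfrac{1}{16} \equiv 1 \pmod 3$, not $0$. What is true is the functional equation $(1+t)^{(p-1)/2} f(t) = f(t^p)$ for $f = (1+t)^{1/2}$, giving a recursive rather than truncating structure; any proof along your lines would have to exploit this instead.
\item Since $\det(tA+B)$ is linear rather than a cubic with a triple root, the curve $y^2 = \det(tA+B)$ is a smooth rational curve, not a cuspidal cubic. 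Your $\mathbb{G}_a$ heuristic may still be salvageable via the incidence correspondence on $\A \times \B^*$, but the ``cuspidal Poncelet curve'' justification as stated does not apply.
\end{itemize}
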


While in this paper we restrict the analysis to the probability of choosing pairs of conics within a fixed pencil, a natural extension would be to consider the probability when choosing two arbitrary conics in the projective plane.
By utilizing the law of total probabilities, our results can be used to compute the probability of obtaining a $3$\nobreakdash-Poncelet pair or a $4$\nobreakdash-Poncelet pair among all possible valid pairs of conics, with a conveniently chosen probability measure on that set, which would also require computing probabilities for a type of pencil determined by a randomly chosen pair of conics.
Another way is used in \cite{chi2017ptc} and \cite{wan2025pnc}, where they computed the total probabilities in a two-stage process: first, randomly selecting a pencil, and then from the chosen pencil, randomly choosing a pair of smooth conics.
In \cite{chi2017ptc}, the probability of selecting a pencil was defined to be proportional to the number of smooth pairs in that pencil, while in \cite{wan2025pnc} such probability is proportional to obtaining the base point configuration of that pencil if the base points are randomly selected.
	
It is interesting to note that our results can also be interpreted in terms of solids in the $5$-dimensional projective space over $\F_q$, with $q$ odd. 
Namely, since the conics in the projective plane can be naturally identified with the hyperplanes in the $5$-dimensional projective space, the present work computes the probability that two such hyperplanes satisfying certain conditions satisfy the Poncelet property for triangles and tetragons
whose sides are conics on the Veronese surface.
See \cite{al2025} for the details.

\section*{Acknowledgments}
The research of M.~R.~was supported by the Australian Research Council, Discovery Project 190101838 \emph{Billiards within quadrics and beyond}, the Serbian Ministry of Science, Technological Development and Innovation and the Science Fund of Serbia grant IntegraRS.

R.~R.~would like to acknowledge the support from the Faculty of Science Research Tuition Fee Scholarship and the Postgraduate Research Scholarship in Mathematics and Statistics provided by the University of Sydney. 

The authors are grateful to the referees for their careful reading and insightful comments and suggestions.


\begin{appendices}

\section{Classification of Pencils of Conics}
\label{sec:dicksonclass}

In this appendix, we present Dickson's classification \cite{dic1908penauto}, which is a complete classification of all pencils of conics in $\Prj^2(\F_q)$ that are unique up to projective automorphism.
We recreate this classification in Table \ref{tab:pencil_class_auto} as found in \cite{hir1998projfinite} and denote the pencil in the $j$th row as $\Pen_j$. 
Before explaining that table, we recall some essential facts about conics and their pencils.
\begin{table}[!htb]
\centering
\renewcommand{\arraystretch}{1.5}%
\caption{Dickson classification \cite{dic1908penauto,hir1998projfinite}}
\begin{tabular}{c|c|c|c|c|c|c|c}
\cline{1-8}
\multicolumn{1}{c|}{\multirow{2}{*}{$\Pen_j$}} & \multicolumn{1}{c|}{\multirow{2}{*}{Base Points}} & \multicolumn{4}{c|}{Count per Type of Conic}                                                                          & \multicolumn{2}{c}{Generators}                     \\ \cline{3-8} 
\multicolumn{1}{c|}{} & \multicolumn{1}{c|}{}                             & \multicolumn{1}{c|}{Smooth} & \multicolumn{1}{c|}{Two Lines} & \multicolumn{1}{c|}{Point} & \multicolumn{1}{c|}{Line} & \multicolumn{1}{c|}{$\A_j$} & \multicolumn{1}{c}{$\B_j$} \\ \hline
    $\Pen_{1}$   &      & 0             & $q$                 & 0                 & 1           & $x^2$                     & $xy$ \\
    \hline
    $\Pen_{2}$   &       & 0             & $q+1$               & 0                 & 0           & $xy$                      & $xz$ \\
    \hline
    $\Pen_{3}$   & $(1,1,1,1)1$      & $q-2$           & 3                 & 0                 & 0           & $xy$                      & $xz+yz+z^2$ \\
    \hline
    $\Pen_{4}$   & $(1,1,2)1$        & $q-1$           & 2                 & 0                 & 0           & $xy$                      & $xz+z^2$ \\
    \hline
    $\Pen_{5}$   & $(1,3)1$          & $q$             & 1                 & 0                 & 0           & $xy$                      & $xz+y^2$ \\
    \hline
    $\Pen_{6}$   & $(2,2)1$          & $q-1$           & 1                 & 0                 & 1           & $xy$                      & $z^2$ \\
    \hline
    $\Pen_{7} ^{**}$   & $(4)1$      & 0             & 0                 & 0                 & $q+1$       & $x^2$                     & $y^2$ \\
    \hline
    $\Pen_{8}$   & $(4)1$            & $q$             & 0                 & 0                 & 1             & $x^2$                     & $xy+z^2$ \\
    \hline
    $\Pen_{9} ^{**}$   & $(4)1$      & 0             & $\frac{q}{2}$     & $\frac{q}{2}$     & 1             & $x^2$                     & $xy+y^2$ \\
    \hline
    $\Pen_{10} ^{\#}$  & $(4)1 $      & 0             & $\frac{q-1}{2}$   & $\frac{q-1}{2}$   & 2             & $xy$                      & $x^2-y^2$ \\
    \hline
    $\Pen_{11} ^{\#}$  & $(4)1 $      & 0             & $\frac{q+1}{2}$   & $\frac{q+1}{2}$   & 0             & $xy$                      & $x^2-\nu y^2 {}^{\dag}$ \\
    \hline
    $\Pen_{12} ^{\#\#}$  & $(4)1 $    & 0             & $\frac{q-1}{2}$   & $\frac{q-1}{2}$   & 2             & $xy$                      & $x^2+y^2$  \\
    \hline
    $\Pen_{13} ^{\#\#}$  & $(4)1 $    & 0             & $\frac{q+1}{2}$   & $\frac{q+1}{2}$   & 0             & $xy$                      & $x^2-y^2$  \\
    \hline
    $\Pen_{14}$  & $(1,1)1,(1)2$     & $q$             & 1                 & 0                 & 0             & $xy$                      & $xz+y^2+yz+e z^2 {}^{\ddag}$ \\
    \hline
    $\Pen_{15}$  & $(2)1,(1)2$       & $q-1$           & 1                 & 1                 & 0             & $xy$                      & $y^2+yz+e z^2 {}^{\ddag}$ \\
    \hline
    $\Pen_{16}$  & $(1,1)2$          & $q-2$           & 1                 & 2                 & 0             & $xy$                      & $e x^2 + xz + d y^2 + yz + z^2  {}^{\ddag}$ \\
    \hline
    $\Pen_{17}$  & $(2)2$            & $q-1$           & 0                 & 1                 & 1             & $x^2$                     & $y^2+yz+e z^2 {}^{\ddag}$ \\
    \hline
    $\Pen_{18}$  & $(1)1,(1)3$       & $q+1$           & 0                 & 0                 & 0             & $-xz + y^2$                & $x^2+cxy+by^2+yz {}^{\ddag}$ \\
    \hline
    $\Pen_{19} ^{*}$  & $(1)4 $       & $q$             & 0                 & 1                 & 0             & $x^2 - \nu y^2 {}^{\dag}$  & $2 \sigma xy -\rho y^2 + z^2 {}^{\dag}$  \\
    \hline
    $\Pen_{20} ^{**}$  & $(1)4 $      & $q$             & 0                 & 1                 & 0             & $x^2+xy+e y^2 {}^{\ddag}$  & $f xz + y^2 + z^2 {}^{\ddag}$ \\
    \hline
\end{tabular}

\label{tab:pencil_class_auto}
\flushleft
\footnotesize{$^\dag$ $\nu$ and $\rho^2 - 4 \nu \sigma^2$ are non-squares.}
\footnotesize{$^\ddag$ $T^3+bT^2+cT+1,\, T^2+T+d,\, T^2+T+e,\,$ and $ T^2 + e f^2 T + f^2$ are irreducible in $\F_q[T]$.}

\footnotesize{$^{*}$  $q$ must be odd.}
\footnotesize{$^{**}$  $q$ must be even.}
\footnotesize{$^{\#}$  $q \equiv 1 \mod 4$.}
\footnotesize{$^{\#\#}$  $q \equiv -1 \mod 4$.}
\end{table}

In the projective plane $\Prj^2(\F_q)$, there are \emph{four types of conics}  -- each conic is one of the following:
\begin{itemize}	
\item a smooth conic;
\item the union of two distinct lines;
\item a single line;
\item a point.
\end{itemize}	
Note that a conic which coincides with a line is treated as a double line, while a conic coinciding with a point will in fact be, in the projective plane over the algebraic closure $\overline{\F}_q$, the union of two conjugate lines which intersect at that point. 
We refer to the non-smooth conics as \emph{singular conics}.

Recall that the \emph{base points} of a given pencil are common points of all conics in the pencil.
Since pencils are liner families, the set of base points coincides with the intersection of any two elements of the pencil.
The configuration of the base points for each pencil is described in the second column of Table \ref{tab:pencil_class_auto}.

Aside from $\Pen_1$ and $\Pen_2$, the conics in the remaining pencils have no common components.
Thus, according to B\'ezout's theorem, each of those pencils has in $\Prj^2(\overline{\F_q})$ four base points counting multiplicity.
For example, the configuration $(1,1,1,1)1$ represents four base points of multiplicity $1$ in $\Prj^2(\F_q)$, while $(2)1$ represents a base point in $\F_q$ with multiplicity $2$, and $(1)2$ represents two base points that are conjugate in $\Prj^2(\F_{q^2})$. 
More generally, $(j)k$ represents $k$ conjugate base points of multiplicity $j$ in the projective plane over $\F_{q^k}$.

Two conics intersect transversally if they intersect at four distinct points in $\Prj^2(\overline{\F_q})$; that is, each base point must have intersection multiplicity 1. This can be any pair of conics in pencils $\Pen_{3}, \Pen_{14}, \Pen_{16}, \Pen_{18}$ and $\Pen_{19}$, which are the pencils considered in \cite{chi2017ptc}. In this paper, we also consider pencils with non-transversally intersecting elements, which refers to $\Pen_{4}, \Pen_{5}, \Pen_{6}, \Pen_{8}, \Pen_{15}$ and $\Pen_{17}$.

Every pencil of conics in $\Prj^2(\F_q)$ contains $q+1$ conics and the distribution of each type of conics is given in the third column.

For pencils considered in this paper, we compute the homogeneous coordinates of the base points together with the index $\eta \in \Prj^1(\F_q)$ corresponding to singular elements in that pencil. This is summarized in Table \ref{tab:pencil_sing_base}.

\begin{table}[!htb]
\centering
\renewcommand{\arraystretch}{1.5}%
\caption{Singular elements and base points of each pencil}
\begin{tabular}{c|c|l}
\hline
$\Pen_j$ & Singular $\eta {}^{*}$        & Base Points (Intersection Multiplicity)  \\
\hline
$\Pen_{3}$      & $0, 1 , \infty$        &  $ [-1:0:1], [0:-1:1], [0:1:0], [1:0:0]$            \\
\hline
$\Pen_{4}$      & $0, \infty $           &  $ [-1:0:1], [1:0:0], [0:1:0] (2) $            \\
\hline
$\Pen_{5}$      & $\infty$               &  $ [1:0:0], [0:0:1] (3)$            \\
\hline
$\Pen_{6}$      & $0, \infty $           &  $[0:1:0] (2), [1:0:0] (2)$            \\
\hline
$\Pen_{8}$      & $\infty$               &  $[0:1:0] (4)$            \\
\hline
$\Pen_{14}$     & $\infty$               &  $[-e:0:1], [1:0:0], [0:\mu_1 {}^{\dag}:1], [0:\mu_2 {}^{\dag}:1]$            \\
\hline
$\Pen_{15}$     & $0, \infty $           &  $[1:0:0] (2), [0:\mu_1 {}^{\dag}:1], [0:\mu_2 {}^{\dag}:1] $            \\
\hline
$\Pen_{16}$     & $\eta_{+} {}^{**}, \eta_{-} {}^{**} , \infty$  &  $[0:1:\mu_3 {}^{\ddag}], [0:1:\mu_4 {}^{\ddag}], [1:0:\mu_1 {}^{\dag}], [1:0:\mu_2 {}^{\dag}]$          \\
\hline
$\Pen_{17}$     &  $0, \infty $          &  $[0:\mu_1 {}^{\dag}:1]  (2), [0:\mu_2 {}^{\dag}:1] (2)$            \\
\hline
$\Pen_{18}$     &  None                  &  $[0:0:1], [1:\mu_5 {}^{\#}:\mu^2_5 {}^{\#}], [1:\mu_6 {}^{\#}:\mu^2_6 {}^{\#}], [1:\mu_7 {}^{\#}:\mu^2_7 {}^{\#}]$            \\
\hline
$\Pen_{19}$     &  $\infty$              & $[-\sqrt{\nu}:1:\zeta_1 {}^{\#\#}], [-\sqrt{\nu}:1:-\zeta_1 {}^{\#\#}], [\sqrt{\nu}:1:\zeta_2 {}^{\#\#}], [\sqrt{\nu}:1:-\zeta_2 {}^{\#\#}]$ \\
\hline
\end{tabular}
\label{tab:pencil_sing_base}

\flushleft

\footnotesize{$^*$ $\eta$ that correspond to singular conics in $\Pen_j$; that is, $\det C_j(\eta) = 0$}

\footnotesize{$^{**}$ $\eta_{+} = \dfrac{1+\sqrt{(1-4d)(1-4e)}}{2}, \eta_{-} = \dfrac{1-\sqrt{(1-4d)(1-4e)}}{2}$}

\footnotesize{$^\dag$ $\mu_1$,$\mu_2$ are roots of $T^2 + T + e$ in $\overline{\F_q}$}
\footnotesize{$^\ddag$ $\mu_3$,$\mu_4$ are roots of $T^2 + T + d$ in $\overline{\F_q}$}
\footnotesize{$^\#$ $\mu_5$,$\mu_6$,$\mu_7$ are roots of $T^3 + bT + cT + 1$ in $\overline{\F_q}$}

\footnotesize{$^{\#\#}$ $\zeta_1 = \sqrt{\rho+2\sigma\sqrt{\nu}}, \, \zeta_2 = \sqrt{\rho - 2\sigma\sqrt{\nu}}$}

\end{table} 

\end{appendices}

\nocite{*}
\printbibliography

@misc{al2025,
	title={Linear complete symmetric rank-distance codes}, 
	author={N. Alnajjarine and M. Lavrauw},
	year={2025},
	eprint={2503.02586},
	archivePrefix={arXiv},
	primaryClass={math.CO},
}

@book{fla2008poncelet,
    title={Poncelet's theorem},
    author={L. Flatto},
    year={2008},
    publisher={American Mathematical Society}
}

@article{grihar1977poncelet,
    author = {P. Griffiths and J. Harris},
    title = {A Poncelet theorem in space},
    journal = {Math. Helv.},
    year = {1977}
}

@article{grihar1978cayley,
    author = {P. Griffiths and J. Harris},
    title = {On Cayley's explicit solution to Poncelet's porism},
    journal = {L'Enseignement mathématique.},
    year = {1978}
}

@book{drarad2011poncelet,
    title = {Poncelet porisms and beyond},
    author = {V. Dragović and M. Radnović},
    series = {Frontiers in Mathematics},
    year = {2011},
    publisher = {Springer Nature}
}

@article{chi2017ptc,
    author = {J. Chipalkatti},
    title = {On the Poncelet triangle condition over finite fields},
    journal = {Finite Fields and Their Applications},
    year = {2017}
}

@misc{wan2025pnc,
    title={Counting pairs of conics over finite fields that satisfy the Poncelet n-gon condition}, 
    author={T. Wang},
    year={2025},
    eprint={2309.16978v2},
    archivePrefix={arXiv},
    primaryClass={math.RA},
}

@article{jowlia2023poncelet,
    author = {S. Jow and C. Liang},
    title = {Poncelet’s theorem for conics in any position and any characteristic},
    journal = {Communications in Algebra},
    year = {2023}
}

@phdthesis{kus2016ponceletfinite,
    author = {K. Kusejko},
    title = {Poncelet's theorem in finite projective planes and beyond},
    school = {ETH Zurich},
    year = {2016}
}

@article{hk2018,
	author = {N. Hungerb\"uhler and K. Kusejko},
	title = {Poncelet's {T}heorem in the four non-isomorphic finite
		projective planes of order 9},
	journal = {Ars Combin.},
	volume = {140},
	year = {2018},
	pages = {21--44}
}

@article{hunkus2020poncelet,
    author = {N. Hungerb\"uhler and K. Kusejko},
    title = {A Poncelet criterion for special pairs of conics in $PG(2,p^m)$
},
    journal = {Int. Electron. J. Geom.},
    year = {2020}
}

@book{ber1987geom,
    author = {M. Berger},
    title = {Geometry II},
    publisher = {Universitext, Springer-Verlag},
    year = {1987}
}

@book{hir1998projfinite,
    author = {J. Hirschfeld},
    title = {Projective geometries over finite fields, 2nd Ed.},
    publisher = {Oxford Mathematical Monographs, The Clarendon Press},
    year = {1998}
}

@book{ric2011projgeom,
    author = {J. Richter-Gebert},
    title = {Perspectives on projective geometry},
    publisher = {Springer Science and Business Media},
    year = {2011}
}

@book{siltat2015ellcurves,
    author = {J. Silverman and J. Tate},
    title = {Rational points on elliptic curves, 2nd Ed.},
    publisher = {Springer-Verlag},
    year = {2015}
}

@book{dumfoo2004absalgebra,
    author = {D. Dummit and R. Foote},
    title = {Abstract algebra, 3rd Ed.},
    publisher = {Wiley},
    year = {2004}
}

@book{ken2011algebracurve,
    author = {K. Kendig},
    title = {A guide to plane algebraic curves},
    publisher = {Mathematical Association of America},
    year = {2011}
}

@article{dic1906irredfinite,
    author = {L. Dickson},
    title = {Criteria for the irreducibility of functions in a finite field},
    journal = {Bulletin of the American Mathematical Society},
    year = {1906}
}

@article{dic1908penauto,
    author = {L. Dickson},
    title = {On families of quadratic forms in a general field},
    journal = {Quarterly Journal of Pure and Applied Mathematics},
    year = {1908}
}

@article{pon1822poncelet,
    author = {J. Poncelet},
    title = {Trait\'e des propri\'et\'es projectives des figures},
    journaltitle = {Metz, Paris},
    year = {1822}
}

@article{cay1854cayley,
    author = {A. Cayley},
    title = {Developments on the porism of the in-and-circumscribed polygon},
    journaltitle = {Philosophical magazine},
    year = {1854}
}

@article{drarad2025poncelet,
    author = {V. Dragović and M. Radnović},
    title = {Poncelet porism in singular cases},
    journaltitle = {Regular and Chaotic Dynamics},
    year = {2025}
}

@software{hos2020galpack,
    title = {Galois: A performant NumPy extension for Galois fields},
    author = {M. Hostetter},
    month = {11},
    year = {2020},
    url = {https://github.com/mhostetter/galois},
}

\end{document}